\def\namedlabel#1#2{\begingroup
    #2%
    \def\@currentlabel{#2}%
    \phantomsection\label{#1}\endgroup
}
\theoremstyle{thmstyleone}%
\newtheorem{theorem}{Theorem}
\newtheorem{lemma}{Lemma}
\newtheorem{corollary}{Corollary}
\newtheorem{proposition}[theorem]{Proposition}%
\theoremstyle{thmstyletwo}%
\theoremstyle{thmstylethree}%
\newtheorem{definition}{Definition}%
\newcommand{\Bint}{\sf{2Int}}
\newcommand{\BintN}{\sf{N2Int}}
\newcommand{\BintNA}{\sf{N2Int^*}}
\newcommand{\SCintN}{\sf{SC2Int}}
\newcommand{\Bes}{\sf{BeS}}
\newcommand{\At}{\sf{At}}
\newcommand{\B}{\mathcal{B}}
\newcommand{\C}{\mathcal{C}}
\begin{document}

\title[Bilateral base-extension semantics]{Bilateral base-extension semantics}


\author*[1,2]{\fnm{Victor} \sur{Barroso-Nascimento}}\email{v.nascimento@ucl.ac.uk}

\author*[2,3]{\fnm{Maria} \sur{Osório}}\email{oosorio.mariac@gmail.com}

\affil[1]{\orgdiv{Computer Science Department}, \orgname{University College London}}
\affil[2]{\orgdiv{Mathematics  Department}, \orgname{University of Lisbon}}


\abstract{Bilateralism is the position according to which assertion and rejection are conceptually independent speech acts. Logical bilateralism demands that systems of logic provide conditions for assertion and rejection that are not reducible to each other, which often leads to independent definitions of proof rules (for assertion) and dual proof rules, also called refutation rules (for rejection). Since it provides a critical account of what it means for something to be a proof or a refutation, bilateralism is often studied in the context of proof-theoretic semantics, an approach that aims to elucidate both the meaning of proofs (and refutations) and what kinds of semantics can be given if proofs (and refutations) are considered as basic semantic notions. The recent literature on bilateral proof-theoretic semantics has only dealt with the semantics of proofs and refutations, whereas we deal with semantics in terms of proofs and refutations.  In this paper we present a bilateral version of base-extension semantics---one of the most widely studied proof-theoretic semantics---by allowing atomic bases to contain both atomic proof rules and atomic refutation rules. The semantics is shown to be sound and complete with respect to the bilateral dual intuitionistic logic $\Bint$. Structural similarities between atomic proofs and refutations also allow us to define duality notions for atomic rules, deductions and bases, which may then be used for the proof of bilateral semantic harmony results. Aside from enabling embeddings between different fragments of the language, bilateral semantic harmony is shown to be a restatement of the syntactic horizontal inversion principle, whose meaning-conferring character may now be interpreted as the requirement of preservation of harmony notions already present at the core of the semantics by inferences.}

\keywords{Bilaterism, Bi-intuitionistic logic, Base-extension semantics, Proof-theoretic Semantics}



\maketitle

\section{Introduction}

The traditional Fregean account of negation depicts denial of a proposition as the mere assertion of its opposite \cite{Johnston2024-JOHWAF}. As Frege himself puts it:

\begin{quote}
    To each thought there corresponds an opposite, so
that rejecting one of them coincides with accepting the other. To make a judgement is to make a choice between opposite thoughts. Accepting one of them and rejecting the other is one act. So there is no need of a special sign for rejecting a thought. We only need a special sign for negation as such. \cite[pg. 198]{Frege1979-FREPW-2}
\end{quote}

Logic has traditionally embraced the Fregean view, dealing with rejection only indirectly through definition of assertion conditions for negation.

This view has been challenged by contemporary logicians, leading to the creation of what is now known as logical bilateralism \cite{Rumfitt2000-RUMYAN-2,Kurbis_2019ProofFalsity}. According to bilateralists, assertion and rejection are distinct speech acts that cannot be reduced to each other, so a proper account of logic should distinguish between conditions for propositional assertion and for propositional rejection \cite{Rumfitt2000-RUMYAN-2}.  Bilateralism was originally proposed as a justification of classical logic, but it is arguably tenable even from a intuitionistic viewpoint \cite{KurbisInt}. Philosophical underpinnings aside, explicit accounts of rejection allow the definition of natural deduction systems for classical logic that, in virtue of their harmonic rules, enjoy proof-theoretic properties previously ascribed only to intuitionistic natural deduction \cite{Rumfitt2000-RUMYAN-2,dummett1991logical}. Later generalizations of the bilateral perspective were also shown to yield a multilateral account \cite{Wansing2023-WANLMR} capable of distinguishing between multiple independent speech acts and an abstract account of logical entailment \cite{BlasioMarcose287a972-17c4-3a9d-98e3-d60c93c28ef8} which sheds light on questions as fundamental as those concerning differences between proof-theoretic and model-theoretic entailment.

One way of portraying the main philosophical difference between the Fregean view and bilateralism is to point out that both acknowledge the existence of a fundamental duality which allows the rejection of propositions, but they disagree about the conceptual level at which the duality is located. Frege puts it at the level of \textit{propositions}, arguing that each proposition has a dual, opposite proposition whose truth conditions obtain if and only if the truth conditions for the original proposition does not obtain, whence rejecting the original proposition is tantamount to asserting its dual. On the other hand, bilateralists put it at the \textit{assertoric} level, arguing that a proposition can be correctly asserted if and only if its assertion conditions obtain, and correctly rejected if and only if its rejection conditions obtain. If assertion and rejection conditions are identified with truth conditions (something acceptable in truth-functional accounts but not universally \cite{dummett1991logical}), this boils down to making an assertion correct (and a rejection incorrect) when the truth conditions of the asserted proposition obtain, and an assertion incorrect (and a rejection correct) when its truth conditions do not obtain.

From a proof-theoretic perspective, the distinction between assertion and rejection is important inasmuch it leads to a stronger distinction between \textit{proofs} and \textit{refutations}. Proofs are traditionally conceived as structures capable of guaranteeing epistemic grounds for assertion, and refutations as structures that do the same but for rejection. If rejections are nothing more than negative assertions then refutations are nothing more than negative proofs, but as soon as rejection emancipates itself from assertion the possibility of formulating an independent concept of refutation presents itself. This insight is especially important in the intuitionistic setting, which often takes the concept of proof to be one of its central philosophical notions \cite{sep-intuitionistic-logic-development, Troelstra1988-TROCIM, vanDalen1973-VANLOI-3}. If refutations are independent from proofs and intuitionistic logic only deals with proofs, the natural conclusion would be that it is an incomplete (or at least partial) logic. This claim is reinforced by the observation that development of the intuitionistically acceptable co-implication operator, studied extensively in bi-intuitionistic logics \cite{Rauszer1974, Gor2020BiIntuitionisticLA}, is historically tied to the development of constructive notions of duality  \cite{Binder03042017, Wolter1998-WOLOLW}.


 In this paper we focus exclusively on $\Bint$, a bilateral logic that deals with intuitionistic connectives and their dual refutational versions \cite{Wan110.1093/logcom/ext035, ayhan2020cutfree}. This logic is chosen due to the simplicity of its proof-theoretic characterization, which in turn makes it particularly valuable to the investigation of fundamental proof-theoretic questions such as those of synonymy between proofs and criteria for the uniqueness of logical connectives \cite{Ayhan2023-AYHOSI, ayhan2022uniquenesslogicalconnectivesbilateralist, Wansing2024}. For readers interested in further delving into bilateralism, brief but thorough overviews are presented at the beginning of \cite{Wan110.1093/logcom/ext035, Wansing2023-WANLMR}.

The aim of this paper is to close an important gap in the literature on bilateral proof-theoretic semantics. As pointed out by Schroeder-Heister \cite{sep-intuitionistic-logic-development}, proof-theoretic semantics encompasses both the study of the semantics of proofs (\textit{i. e.} of the substantive contents of proofs) and of semantics in terms of proofs (\textit{i. e.} frameworks in which proofs are considered semantic values), and sometimes both aspects end up intertwined (\textit{i.e.} the semantics of proofs can itself be given in terms of proofs). Up until now, the bilateralist literature has focused only on the study of the semantics of proofs, providing discussions on the relevance of bilateralism for the engagement with traditional semantic notions such as sense and denotation \cite{Ayhan2020-AYHWIT, Tranchini10.1093/logcom/exu028}. This paper, on the other hand, develops bilateralism in the second direction, providing the first semantic framework in which proofs and refutations are taken to be core semantic concepts. The bilateral proof-theoretic structure of the semantics is also shown to yield interesting properties in the form of harmony results, which establishes that important syntactic dualities observed between proofs and refutations \cite{francez2014bilateralism} have clear semantic counterparts -- that is, bilateral base-extension semantics is shown to be one of the frameworks mentioned by Schroeder-Heister in which the semantics of proofs can be given in terms of proofs.

\subsection{Base-extension semantics}

Base-extension semantics ($\Bes$) is one of the most well-studied frameworks of proof-theoretic semantics. Its main presentation is sound and complete with respect to intuitionistic logic \cite{Sandqvist2015IL}, but some restrictions also lead to a classical semantics \cite{Sandqvist}. $\Bes$ became especially prominent after some traditional validity concepts were shown to be incomplete with respect to intuitionistic logic \cite{piecha2015failure}.

We fix a countably infinite set of propositional atoms and call it $\At$; lowercase Latin letters ($p, q$) denote particular atoms; capital Greek letters with the subscript `$\At$' ($\Gamma_{\At}, \Delta_{\At}$) denote sets of atoms; lowercase Greek letters ($\phi, \psi$) denote formulas; capital Greek letters without the subscript `$\At$' ($\Gamma, \Delta$) denote sets of formulas; commas between sets denote set union. The definitions below also assume that the reader is familiar with traditional concepts of natural deduction \cite{prawitz1965}.

\begin{definition}[Atomic systems]\label{def:as}
An {\em atomic system} (a.k.a. a {\em base}) $\mathcal{B}$ is a (possibly empty) set of atomic rules of the form
\begin{prooftree}
\AxiomC{$[\Gamma^{1}_{\At}]$}
\noLine
\UnaryInfC{.}
\noLine
\UnaryInfC{.}
\noLine
\UnaryInfC{.}
\noLine
\UnaryInfC{$p_1$}
\AxiomC{$\ldots$}
\AxiomC{$[\Gamma^{n}_{\At}]$}
\noLine
\UnaryInfC{.}
\noLine
\UnaryInfC{.}
\noLine
\UnaryInfC{.}
\noLine
\UnaryInfC{$p_n$}
\TrinaryInfC{$p$}
\end{prooftree}
 The sequence $\langle p^{1}, ... , p^{n}\rangle$ of immediate premises in a rule can be empty -- in this case the rule is called an {\em atomic axiom}. The sets $\Gamma^{i}_{\At}$ can also be empty, in which case they are omitted.
\end{definition}

\begin{definition}[Extensions]\label{def:exts1}
An atomic system $\mathcal{C}$ is an {\em extension} of an atomic system $\mathcal{B}$ (written $\mathcal{C} \supseteq \mathcal{B}$), if $\mathcal{C}$ results from adding a (possibly empty) set of atomic rules to $\mathcal{B}$.   
\end{definition}


\begin{definition}[Deducibility]\label{def:deduc}
  For any $\Gamma_{\At}$, the consequence $\Gamma_{\At} \vdash_\mathcal{B} p$ holds if and only if there is a natural deduction derivation with conclusion $p$ and open premises $\Delta_{\At}$ (for some $\Delta_{\At} \subseteq \Gamma_{\At}$) that only uses the rules of $\mathcal{B}$.
\end{definition}

Notice that if  $p \in \{p_{1}, ... , p_{n}\}$ then $p_1, ..., p_{n} \vdash_{\mathcal{B}} p$ regardless of the $\mathcal{B}$ due to how deducibility is defined.

The syntactic notion of base derivability is extended to a semantic notion of \textit{validity} (sometimes called \textit{support}) as follows:

\begin{definition}\label{def:wvalidity} Validity in $\Bes$ is defined as follows:
    
\begin{enumerate}
    \item $\Vdash_{\mathcal{B}} p$ iff $\vdash_{\mathcal{B}} p$, for  $p \in\At$;

\medskip

    \item $\Vdash_{\mathcal{B}} (\phi \land \psi)$ iff $\Vdash_{\mathcal{B}} \phi$ and $\Vdash_{\mathcal{B}} \psi$;

\medskip      

     \item $\Vdash_{\mathcal{B}} (\phi \to \psi)$ iff $\phi \Vdash_{\mathcal{B}}  \psi$; 

\medskip       

      \item $\Vdash_{\mathcal{B}} \phi \lor \psi$ iff  $ \forall \mathcal{C} (\mathcal{C} \supseteq \mathcal{B}) $ and all $p \in \At$, $\phi \Vdash_{\mathcal{C}} p$ and $\psi \Vdash_{\mathcal{C}} p$ implies $\Vdash_{\mathcal{C}}  p$;

\medskip

\item $\Vdash_{\mathcal{B}} \bot$ iff $\Vdash_{\mathcal{B}} p$ for all $p \in \At$

\medskip

\item For any non-empty set of formulas $\Gamma$, $\Gamma \Vdash_{\mathcal{B}} A$ iff for all $\mathcal{C}$ such that $\mathcal{C} \supseteq \mathcal{B}$ it holds that, if $\Vdash_{\mathcal{C}} \phi$ for all $\phi \in \Gamma$, then $\Vdash_{\mathcal{C}} \psi$;

\medskip

\item $\Gamma \Vdash_{\Bes} A$ iff $\Gamma \Vdash_{\mathcal{B}} A$ for all $\mathcal{B}$;

\end{enumerate}
\end{definition}

From the facts that validity is defined inductively and the base clause reduces validity to atomic derivability it follows that validity of a formula (or consequence) in $\mathcal{B}$ is ultimately reducible to atomic derivability in $\mathcal{B}$. Hence, since the logical entailment relation $\Vdash_{\Bes}$ is defined by quantifying over all bases $\mathcal{B}$, logical validity is ultimately reducible to derivability in all bases, making this a semantic in terms of proofs.

\subsection{Syntax and (model-theoretic) semantics for $\Bint$}

The bilateral logic $\Bint$ \cite{Wan110.1093/logcom/ext035} relies on a distinction between \textit{proofs} and \textit{dual proofs}, which may also be (more suggestively) called \textit{proofs} and \textit{refutations}. In syntactic presentations this manifests itself in the form of two kinds of rules; in semantic presentations, in the form of two kinds of semantic clauses.

\subsubsection{The natural deduction systems $\BintN$ and $\BintNA$}

 Syntactic presentations of $\Bint$ distinguish between proof rules and refutation rules, as well as between proof assumptions (or simply assumptions) and refutation assumptions (or contra-assumptions). The dependencies of a deduction are represented by a pair $\Gamma ; \Delta$, in which $\Gamma$ is a set of proof assumptions and $\Delta$ a set of refutation assumptions. Proof rules are denoted by a single line; refutations rules by a double line.


Whenever the premise of a rule is preceded by a single line it must be either a proof assumption or a deduction ending with a proof rule, and whenever it is preceded by a double line it must be either a refutation assumption or a deduction ending with a refutation rule. The dotted lines in the rules for $\BintNA$ are placeholders for single and double lines, but any particular application of them is considered valid only if all dotted lines are replaced by single lines or all dotted lines are replaced by double lines.  When defining deductions inductively we stipulate that a single occurrences of a proof assumption $\phi$ already counts as a proof of $\phi$ depending on $\phi; \emptyset$, and a single occurrence of a refutation assumption $\phi$ already counts as a refutation of $\phi$ depending on $\emptyset ; \phi$. Proof and refutation assumptions are often numbered for the purpose of graphically representing the rule application that discharges them; an assumption with superscript $n$ is discharged by an application with $n$ on its right label. For a precise definition of all such notions, the reader is once again referred to \cite{Wan110.1093/logcom/ext035}. It is also worthy of notice that $\BintNA$ was already presented as a variant of the original $\BintN$ in \cite{AyhanRules10.1093/logcom/exae014, oddsson2025strongnegationdefinable2int}.



\bigskip
\bigskip

 Proof rules of the natural deduction system $\BintN$:

 \bigskip

\begin{prooftree}
\AxiomC{$\Gamma, [\phi]; \Delta$}
\noLine
\UnaryInfC{$\Pi$}
\UnaryInfC{$\psi$}
\RightLabel{\tiny{$I \to (+)$}}
\UnaryInfC{$\phi \to \psi$}
\DisplayProof
\qquad
\AxiomC{$\Gamma_1 ; \Delta_1$}
\noLine
\UnaryInfC{$\Pi_1$}
\UnaryInfC{$\phi \to \psi$}
\AxiomC{$\Gamma_2; \Delta_{2}$}
\noLine
\UnaryInfC{$\Pi_2$}
\UnaryInfC{$\phi$}
\RightLabel{\tiny{$E \to (+)$}}
\BinaryInfC{$\psi$}
\end{prooftree}

\begin{prooftree}
\AxiomC{$\Gamma; \Delta$}
\noLine
\UnaryInfC{$\Pi$}
\UnaryInfC{$\phi$}
\RightLabel{\tiny{$I_{1} \vee (+)$}}
\UnaryInfC{$\phi \vee \psi$}
\DisplayProof
\quad
\AxiomC{$\Gamma; \Delta$}
\noLine
\UnaryInfC{$\Pi$}
\UnaryInfC{$\psi$}
\RightLabel{\tiny{$I_{2} \vee (+)$}}
\UnaryInfC{$\phi \vee \psi$}
\DisplayProof
\quad
\AxiomC{$\Gamma_1; \Delta_{1}$}
\noLine
\UnaryInfC{$\Pi_1$}
\UnaryInfC{$\phi \vee \psi$}
\AxiomC{$\Gamma_2, [\phi]; \Delta_{2}$}
\noLine
\UnaryInfC{$\Pi_2$}
\UnaryInfC{$\chi$}
\AxiomC{$\Gamma_3, [\psi]; \Delta_{3}$}
\noLine
\UnaryInfC{$\Pi_3$}
\UnaryInfC{$\chi$}
\RightLabel{\tiny{$E\vee (+)$}}
\TrinaryInfC{$\chi$}
\end{prooftree}

\begin{prooftree}
\AxiomC{$\Gamma_1; \Delta_{1}$}
\noLine
\UnaryInfC{$\Pi_1$}
\UnaryInfC{$\phi$}
\AxiomC{$\Gamma_2; \Delta_{2}$}
\noLine
\UnaryInfC{$\Pi_2$}
\UnaryInfC{$\psi$}
\RightLabel{\tiny{$I \wedge (+)$}}
\BinaryInfC{$\phi \wedge \psi$}
\DisplayProof
\quad
\AxiomC{$\Gamma; \Delta$}
\noLine
\UnaryInfC{$\Pi$}
\UnaryInfC{$\phi \wedge \psi$}
\RightLabel{\tiny{$ E_{1} \wedge (+)$}}
\UnaryInfC{$\phi$}
\DisplayProof
\qquad
\AxiomC{$\Gamma; \Delta$}
\noLine
\UnaryInfC{$\Pi$}
\UnaryInfC{$\phi \wedge \psi$}
\RightLabel{\tiny{$E_{2} \wedge (+)$}}
\UnaryInfC{$\psi$}
\end{prooftree}

\begin{prooftree}
\AxiomC{$\Gamma_{1}; \Delta_{1}$}
\noLine
\UnaryInfC{$\Pi_{1}$}
\UnaryInfC{$\phi$}
\AxiomC{$\Gamma_{1}; \Delta_{1}$}
\noLine
\UnaryInfC{$\Pi_{1}$}
\doubleLine
\UnaryInfC{$\psi$}
\RightLabel{\tiny{$I \mapsfrom (+)$}}
\BinaryInfC{$\phi \mapsfrom \psi$}
\DisplayProof
\quad
\AxiomC{$\Gamma; \Delta$}
\noLine
\UnaryInfC{$\Pi$}
\UnaryInfC{$\phi \mapsfrom \psi$}
\RightLabel{\tiny{$ E_{1} \mapsfrom (+)$}}
\UnaryInfC{$\phi$}
\DisplayProof
\quad
\AxiomC{$\Gamma; \Delta$}
\noLine
\UnaryInfC{$\Pi$}
\UnaryInfC{$\phi \mapsfrom \psi$}
\doubleLine
\RightLabel{\tiny{$ E_{2} \mapsfrom (+)$}}
\UnaryInfC{$\psi$}
\end{prooftree}

\begin{prooftree}
    \AxiomC{$\Gamma; \Delta$}
\noLine
\UnaryInfC{$\Pi$}
\UnaryInfC{$\bot$}
\RightLabel{\tiny{$ \bot (+)$}}
\UnaryInfC{$\phi$}
\DisplayProof
\qquad
\AxiomC{}
\RightLabel{\tiny{$ \top (+)$}}
\UnaryInfC{$\top$}
\end{prooftree}

\bigskip
\bigskip

Refutation rules of the natural deduction system $\BintN$:

\bigskip

\begin{prooftree}
\AxiomC{$\Gamma; \Delta, \llbracket \psi \rrbracket$}
\noLine
\UnaryInfC{$\Pi$}
\doubleLine
\UnaryInfC{$\phi$}
\doubleLine
\RightLabel{\tiny{$I \mapsfrom (-)$}}
\UnaryInfC{$\phi \mapsfrom \psi$}
\DisplayProof
\qquad
\AxiomC{$\Gamma_1 ; \Delta_1$}
\noLine
\UnaryInfC{$\Pi_1$}
\doubleLine
\UnaryInfC{$\phi \mapsfrom \psi$}
\AxiomC{$\Gamma_2; \Delta_{2}$}
\noLine
\UnaryInfC{$\Pi_2$}
\doubleLine
\UnaryInfC{$\psi$}
\RightLabel{\tiny{$E \mapsfrom (-)$}}
\doubleLine
\BinaryInfC{$\phi$}
\end{prooftree}

\begin{prooftree}
\AxiomC{$\Gamma; \Delta$}
\noLine
\UnaryInfC{$\Pi$}
\doubleLine
\UnaryInfC{$\phi$}
\RightLabel{\tiny{$ I_{1} \wedge (-)$}}
\doubleLine
\UnaryInfC{$\phi \wedge \psi$}
\DisplayProof
\quad
\AxiomC{$\Gamma; \Delta$}
\noLine
\UnaryInfC{$\Pi$}
\doubleLine
\UnaryInfC{$\psi$}
\RightLabel{\tiny{$I_{2} \wedge (-)$}}
\doubleLine
\UnaryInfC{$\phi \wedge \psi$}
\DisplayProof
\quad
\AxiomC{$\Gamma_1; \Delta_{1}$}
\noLine
\UnaryInfC{$\Pi_1$}
\doubleLine
\UnaryInfC{$\phi \wedge \psi$}
\AxiomC{$\Gamma_2; \Delta_{2},  \llbracket \phi \rrbracket$}
\noLine
\UnaryInfC{$\Pi_2$}
\doubleLine
\UnaryInfC{$\chi$}
\AxiomC{$\Gamma_3\; \Delta_{3}, \llbracket \psi \rrbracket$}
\noLine
\UnaryInfC{$\Pi_3$}
\doubleLine
\UnaryInfC{$\chi$}
\RightLabel{\tiny{$ E \wedge (-)$}}
\doubleLine
\TrinaryInfC{$\chi$}
\end{prooftree}

\begin{prooftree}
\AxiomC{$\Gamma_1; \Delta_{1}$}
\noLine
\UnaryInfC{$\Pi_1$}
\doubleLine
\UnaryInfC{$\phi$}
\AxiomC{$\Gamma_2; \Delta_{2}$}
\noLine
\UnaryInfC{$\Pi_2$}
\doubleLine
\UnaryInfC{$\psi$}
\RightLabel{\tiny{$I \vee (-)$}}
\doubleLine
\BinaryInfC{$\phi \vee \psi$}
\DisplayProof
\quad
\AxiomC{$\Gamma; \Delta$}
\noLine
\UnaryInfC{$\Pi$}
\doubleLine
\UnaryInfC{$\phi \vee \psi$}
\RightLabel{\tiny{$E_{1} \wedge (-)$}}
\doubleLine
\UnaryInfC{$\phi$}
\DisplayProof
\qquad
\AxiomC{$\Gamma; \Delta$}
\noLine
\UnaryInfC{$\Pi$}
\doubleLine
\UnaryInfC{$\phi \vee \psi$}
\RightLabel{\tiny{$E_{2} \wedge (-)$}}
\doubleLine
\UnaryInfC{$\psi$}
\end{prooftree}

\begin{prooftree}
\AxiomC{$\Gamma_{1}; \Delta_{1}$}
\noLine
\UnaryInfC{$\Pi_{1}$}
\UnaryInfC{$\phi$}
\AxiomC{$\Gamma_{1}; \Delta_{1}$}
\noLine
\UnaryInfC{$\Pi_{1}$}
\doubleLine
\UnaryInfC{$\psi$}
\RightLabel{\tiny{$I\to (-)$}}
\doubleLine
\BinaryInfC{$\phi \to \psi$}
\DisplayProof
\quad
\AxiomC{$\Gamma; \Delta$}
\noLine
\UnaryInfC{$\Pi$}
\doubleLine
\UnaryInfC{$\phi \to \psi$}
\RightLabel{\tiny{$E_{1} \to (-)$}}
\UnaryInfC{$\phi$}
\DisplayProof
\quad
\AxiomC{$\Gamma; \Delta$}
\noLine
\UnaryInfC{$\Pi$}
\doubleLine
\UnaryInfC{$\phi \to \psi$}
\doubleLine
\RightLabel{\tiny{$E_{2} \to (-)$}}
\UnaryInfC{$\psi$}
\end{prooftree}

\begin{prooftree}
    \AxiomC{$\Gamma; \Delta$}
\noLine
\UnaryInfC{$\Pi$}
\doubleLine
\UnaryInfC{$\top$}
\RightLabel{\tiny{$ \top (-)$}}
\doubleLine
\UnaryInfC{$\phi$}
\DisplayProof
\qquad
\AxiomC{}
\doubleLine
\RightLabel{\tiny{$ \bot (-)$}}
\UnaryInfC{$\bot$}
\end{prooftree}

\bigskip
\bigskip

Rules of $\BintNA$ replacing their corresponding versions in $\BintN$:

\bigskip

\begin{prooftree}
\AxiomC{$\Gamma_1; \Delta_{1}$}
\noLine
\UnaryInfC{$\Pi_1$}
\UnaryInfC{$\phi \vee \psi$}
\AxiomC{$\Gamma_2, [\phi]; \Delta_{2}$}
\noLine
\UnaryInfC{$\Pi_2$}
\dottedLine
\UnaryInfC{$\chi$}
\AxiomC{$\Gamma_3, [\psi]\; \Delta_{3}$}
\noLine
\UnaryInfC{$\Pi_3$}
\dottedLine
\UnaryInfC{$\chi$}
\RightLabel{\tiny{$E\vee (+)$}}
\dottedLine
\TrinaryInfC{$\chi$}
\DisplayProof
\qquad \qquad
  \AxiomC{$\Gamma; \Delta$}
\noLine
\UnaryInfC{$\Pi$}
\UnaryInfC{$\bot$}
\RightLabel{\tiny{$ \bot (+)$}}
\dottedLine
\UnaryInfC{$\phi$}
\end{prooftree}

\begin{prooftree}
\AxiomC{$\Gamma_1; \Delta_{1}$}
\noLine
\UnaryInfC{$\Pi_1$}
\doubleLine
\UnaryInfC{$\phi \wedge \psi$}
\AxiomC{$\Gamma_2; \Delta_{2},  \llbracket \phi \rrbracket$}
\noLine
\UnaryInfC{$\Pi_2$}
\dottedLine
\UnaryInfC{$\chi$}
\AxiomC{$\Gamma_3\; \Delta_{3}, \llbracket \psi \rrbracket$}
\noLine
\UnaryInfC{$\Pi_3$}
\dottedLine
\UnaryInfC{$\chi$}
\RightLabel{\tiny{$ E \wedge (-)$}}
\dottedLine
\TrinaryInfC{$\chi$}
\DisplayProof
\qquad \qquad
    \AxiomC{$\Gamma; \Delta$}
\noLine
\UnaryInfC{$\Pi$}
\doubleLine
\UnaryInfC{$\top$}
\RightLabel{\tiny{$ \top (-)$}}
\dottedLine
\UnaryInfC{$\phi$}
\end{prooftree}

\bigskip

Some properties of the natural deduction system $\BintN$ makes it significantly harder to use in the soundness and completeness proofs for our semantics. As such, we prove the results for $\BintNA$, whose syntactic deducibility relations are defined as follows:

\begin{definition}\label{def:deductionsoriginalproof}
    
   $\Gamma ; \Delta \vdash^{+}_{\BintNA} \phi$ holds iff there is a deduction of $\phi$ from open proof assumptions $\Theta$ and open refutation assumptions $\Sigma$ (for $\Theta \subseteq \Gamma$ and $\Sigma \subseteq \Delta$) using the rules of $\BintNA$ that ends with an application of a proof rule.

    
    
\end{definition}

\begin{definition}\label{def:deductionsoriginalrefutation}

  $\Gamma ; \Delta \vdash^{-}_{\BintNA} \phi$ holds iff there is a deduction of $\phi$ from open proof assumptions $\Theta$ and open refutation assumptions $\Sigma$ (for $\Theta \subseteq \Gamma$ and $\Sigma \subseteq \Delta$) using the rules of $\BintNA$ that ends with an application of a refutation rule.
  
    
\end{definition}

It is straightforward to show that, since the new inferences allowed in $\BintNA$ are already derivable in $\BintN$, the replacement does not affect deductive validity:

\begin{proposition} \label{propositionderivability} \normalfont
    The following rules are derivable in $\BintN$:
    \begin{prooftree}
     \AxiomC{$\Gamma_1; \Delta_{1}$}
\noLine
\UnaryInfC{$\Pi_1$}
\UnaryInfC{$\phi \vee \psi$}
\AxiomC{$\Gamma_2, [\phi]; \Delta_{2}$}
\noLine
\UnaryInfC{$\Pi_2$}
\doubleLine
\UnaryInfC{$\chi$}
\AxiomC{$\Gamma_3, [\psi] ; \Delta_{3}$}
\noLine
\UnaryInfC{$\Pi_3$}
\doubleLine
\UnaryInfC{$\chi$}
\doubleLine
\TrinaryInfC{$\chi$}
\DisplayProof
\qquad \qquad
  \AxiomC{$\Gamma; \Delta$}
\noLine
\UnaryInfC{$\Pi$}
\UnaryInfC{$\bot$}
\doubleLine
\UnaryInfC{$\phi$}
    \end{prooftree}

\begin{prooftree}
\AxiomC{$\Gamma_1; \Delta_{1}$}
\noLine
\UnaryInfC{$\Pi_1$}
\doubleLine
\UnaryInfC{$\phi \wedge \psi$}
\AxiomC{$\Gamma_2; \Delta_{2},  \llbracket \phi \rrbracket$}
\noLine
\UnaryInfC{$\Pi_2$}
\UnaryInfC{$\chi$}
\AxiomC{$\Gamma_3 ; \Delta_{3}, \llbracket \psi \rrbracket$}
\noLine
\UnaryInfC{$\Pi_3$}
\UnaryInfC{$\chi$}
\TrinaryInfC{$\chi$}
\DisplayProof
\qquad \qquad
  \AxiomC{$\Gamma; \Delta$}
\noLine
\UnaryInfC{$\Pi$}
\doubleLine
\UnaryInfC{$\top$}
\UnaryInfC{$\phi$}
\end{prooftree}
    
\end{proposition}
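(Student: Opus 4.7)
The plan is to derive each of the four mixed-polarity rules in $\BintN$ by first using a standard rule to conclude a compound formula that internally packages a proof together with a refutation, and then using an elimination rule that extracts the desired-polarity component. Two operators serve as dual packagers: co-implication $\mapsfrom$, whose $I\mapsfrom(+)$ combines a proof of $\phi$ with a refutation of $\psi$ to yield a proof of $\phi\mapsfrom\psi$ and whose $E_2\mapsfrom(+)$ extracts the refutation of $\psi$ from a proof of $\phi\mapsfrom\psi$; and implication $\to$, whose $I\to(-)$ combines a proof of $\phi$ with a refutation of $\psi$ to yield a refutation of $\phi\to\psi$ and whose $E_1\to(-)$ extracts the proof of $\phi$ from a refutation of $\phi\to\psi$.

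For the two axiomatic cases, the derivations are short. From a proof of $\bot$, the standard $\bot(+)$ rule yields a proof of $\top\mapsfrom\phi$, and then $E_2\mapsfrom(+)$ produces a refutation of $\phi$; this handles the $\bot(+)$ variant. Dually, from a refutation of $\top$, the standard $\top(-)$ rule yields a refutation of $\phi\to\top$, and then $E_1\to(-)$ produces a proof of $\phi$; this handles the $\top(-)$ variant.

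For the two generalized eliminations, I would push the packaging inside the subderivations. In the refutation-conclusion variant of $E\vee(+)$, inside each branch I combine the given refutation of $\chi$ with the axiomatic proof of $\top$ (from $\top(+)$) via $I\mapsfrom(+)$ to obtain a proof of $\top\mapsfrom\chi$; applying the standard $E\vee(+)$ with principal premise $\phi\vee\psi$ then yields a proof of $\top\mapsfrom\chi$, from which $E_2\mapsfrom(+)$ extracts the desired refutation of $\chi$. Symmetrically, in the proof-conclusion variant of $E\wedge(-)$, inside each branch I combine the given proof of $\chi$ with the axiomatic refutation of $\bot$ (from $\bot(-)$) via $I\to(-)$ to obtain a refutation of $\chi\to\bot$; applying the standard $E\wedge(-)$ then yields this refutation globally, and $E_1\to(-)$ extracts the desired proof of $\chi$. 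No real obstacle is expected; the constructions are short and systematic once the packaging trick is identified, and the only bookkeeping concerns matching the polarities of the given premises to the directions of $\mapsfrom$ and $\to$ in the auxiliary compound formulas, and tracking the discharge of the pattern assumptions $[\phi], [\psi], \llbracket\phi\rrbracket, \llbracket\psi\rrbracket$ through the inserted introduction steps.
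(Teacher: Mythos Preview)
Your proposal is correct and is essentially the paper's own proof: the same packaging trick via $\mapsfrom$ on the proof side and $\to$ on the refutation side, with $\top(+)$ and $\bot(-)$ supplying the filler component in the generalized eliminations, and $E_2\mapsfrom(+)$ / $E_1\to(-)$ performing the final extraction. The only cosmetic difference is that for the two short cases the paper instantiates the auxiliary formula as $\psi\mapsfrom\phi$ (resp.\ $\phi\to\psi$) rather than your $\top\mapsfrom\phi$ (resp.\ $\phi\to\top$), which makes no difference since $\bot(+)$ and $\top(-)$ conclude arbitrary formulas.
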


\begin{proof}

We show the result by relying on properties of $\mapsfrom$, $\to$, $\top$ and $\bot$:

\begin{prooftree}
\AxiomC{$\Gamma_{1}, \Delta_{1}$}
\noLine
        \UnaryInfC{$\Pi_{1}$}
    \UnaryInfC{$\phi \vee \psi$}
    \AxiomC{}
    \RightLabel{\tiny{$\top (+)$}}
    \UnaryInfC{$\top$}
    \AxiomC{$\Gamma_{2},[\phi]^1; \Delta$}
    \noLine
    \UnaryInfC{$\Pi_{2}$}
    \doubleLine
    \UnaryInfC{$\chi$}
      \RightLabel{\tiny{$I \mapsfrom (+)$}}
    \BinaryInfC{$\top \mapsfrom \chi$}
        \AxiomC{}
          \RightLabel{\tiny{$\top (+)$}}
    \UnaryInfC{$\top$}
        \AxiomC{$\Gamma_{3},[\psi]^2; \Delta$}
    \noLine
    \UnaryInfC{$\Pi_{3}$}
    \doubleLine
    \UnaryInfC{$\chi$}
          \RightLabel{\tiny{$I \mapsfrom (+)$}}
    \BinaryInfC{$\top \mapsfrom \chi$}
             \RightLabel{\tiny{$E \vee (+), 1, 2$}}
    \TrinaryInfC{$\top \mapsfrom \chi$}
    \doubleLine
                 \RightLabel{\tiny{$E_{2} \mapsfrom (+)$}}
    \UnaryInfC{$\chi$}
\end{prooftree}

\begin{prooftree}
\AxiomC{$\Gamma_{1}, \Delta_{1}$}
\noLine
        \UnaryInfC{$\Pi_{1}$}
        \doubleLine
    \UnaryInfC{$\phi \wedge\psi$}
    \AxiomC{$\Gamma_{2}; \Delta, \llbracket \phi \rrbracket^1$}
    \noLine
    \UnaryInfC{$\Pi_{2}$}
    \UnaryInfC{$\chi$}
        \AxiomC{}
    \doubleLine
    \RightLabel{\tiny{$\bot (-)$}}
    \UnaryInfC{$\bot$}
      \RightLabel{\tiny{$I \to (-)$}}
      \doubleLine
    \BinaryInfC{$\chi \to \bot$}
        \AxiomC{$\Gamma_{3}; \Delta, \llbracket \psi \rrbracket^2$}
    \noLine
    \UnaryInfC{$\Pi_{3}$}
    \UnaryInfC{$\chi$}
         \AxiomC{}
    \doubleLine
    \RightLabel{\tiny{$\bot (-)$}}
    \UnaryInfC{$\bot$}
          \RightLabel{\tiny{$I \to (-)$}}
          \doubleLine
    \BinaryInfC{$\chi \to \bot$}
             \RightLabel{\tiny{$E \wedge (-), 1, 2$}}
             \doubleLine
    \TrinaryInfC{$\chi \to \bot$}
                 \RightLabel{\tiny{$E_{1} \to (-)$}}
    \UnaryInfC{$\chi$}
\end{prooftree}

\begin{prooftree}
    \AxiomC{$\Pi$}
    \UnaryInfC{$\bot$}
    \RightLabel{\tiny{$\bot (+)$}}
    \UnaryInfC{$\psi \mapsfrom \phi$}
    \RightLabel{\tiny{$E_{2} \mapsfrom (+)$}}
    \doubleLine
    \UnaryInfC{$\phi$}
    \DisplayProof
    \qquad
     \AxiomC{$\Pi$}
     \doubleLine
    \UnaryInfC{$\top$}
    \doubleLine
    \RightLabel{\tiny{$\top (-)$}}
    \UnaryInfC{$\phi \to \psi$}
    \RightLabel{\tiny{$E_{1} \to (-)$}}
    \UnaryInfC{$\phi$}
\end{prooftree}

\end{proof}

Those proofs are certainly odd due to their use of operators unrelated to the ones involved in the original derivation, but such detours seem necessary due to the lack of other rules capable of converting proofs into refutations (and vice-versa). For instance, $E_{2} \to (+)$ is the only proof rule with a proof as its premise and a refutation as its conclusion, so it is not surprising that it appears in proof concerning such transitions.  

Although a normalization proof for $\BintN$ is presented in \cite{WANSING201723}, instead of showing the result directly, it proceeds by embedding $\BintN$ deductions in the traditional natural deduction system for intuitionistic logic and relying on normalizations proofs for it \cite{prawitz1965}.  The normal intuitionistic deductions in which the deductions of $\BintN$ are embedded are guaranteed to satisfy the subformula property, but the corresponding deductions of $\BintN$ are not. In fact, it seems that deductions in $\BintN$ cannot be guaranteed to always reduce to other deductions in $\BintN$ satisfying the subformula property. To see why, consider the following simple deduction of $\top$ from $\bot ; \emptyset$:

\begin{prooftree}
    \AxiomC{$\bot ; \emptyset$}
    \RightLabel{\tiny{$\bot(+)$}}
    \UnaryInfC{$\top \mapsfrom \top$}
    \doubleLine
    \RightLabel{\tiny{$E_{2} \mapsfrom (+)$}}
    \UnaryInfC{$\top$}
\end{prooftree}

Since $\top \mapsfrom \top$ is neither a subformula of an open premise nor of the conclusion, the deduction above does not satisfy the subformula property. If the rule $\bot(+)$ is allowed to produce refutations (as in $\BintNA$), we can eliminate the application of $E_{2} \mapsfrom (+)$ and the occurrence of $\top \mapsfrom \top$ by directly obtaining a refutation of $\top$ from the proof assumption $\bot ; \emptyset$, but if it is only allowed to produce proofs (as in $\BintN$) it seems that the application of $E_{2} \mapsfrom (+)$ cannot be eliminated. For the same reason, it does not seem to be the case that applications of $\bot (+)$ (and of $\top (-)$) in $\BintN$ can always be transformed into applications containing atomic conclusions, a result often used in proofs of the subformula property. It is also worth noticing that $\SCintN$, a sequent calculus presentation of $\Bint$ shown to satisfy cut elimination and the subformula property in \cite{Ayhan2023-AYHOSI}, is also defined with adaptations equivalent to those of $\BintNA$.

\subsubsection{Model-theoretic semantics for $\Bint$}

Model-theoretic semantics for $\Bint$ are defined in \cite{Wan110.1093/logcom/ext035}. The semantics is essentially a dual interpretation of the traditional notion of intuitionistic Kripke model \cite{kripke1965semantical}. Particular states $w$ in models $\mathcal{M}$ are allowed to support either the truth or the falsity of a statement (denoted respectively by $\mathcal{M}, w \vDash^{+} \phi$ and $\mathcal{M}, w \vDash^{-} \phi$). Validity in a model is defined by quantifying over all of its states, and logical validity by quantifying over all its models. It is also important to point out that the states of a model are allowed to simultaneously support the truth and falsity of the same formula.

\begin{definition} \label{def:Kripkemodelbilateral}
    A model $\mathcal{M}$ for $\Bint$ is a sequence $\langle I, \geq, v^{+}, v^{-} \rangle$, where $I$ is a set of objects $w$, $\geq$ is a relation on $I$ and $v^{+}$, $v^{-}$ are functions $v^{+}: I \to \At$ and $v^{-}: I \to \At$, satisfying the condition that $w' \geq w$ implies both $v^{+}(w') \supseteq v^{+}(w)$ and  $v^{-}(w') \supseteq v^{-}(w)$.
\end{definition}

\begin{definition} \label{def:modelvalidity}
 \normalfont   The relations $\vDash^{+}$ and $\vDash^{-}$ are defined as follows, with $\nvDash^{+}$ and $\nvDash^{-}$ being defined as holding if and only if, respectively,  $\vDash^{+}$ and $\vDash^{-}$ do not hold:

\begin{enumerate}
    \item[\namedlabel{model:1}{1.}] $\mathcal{M}, w \vDash^{+} p$ iff $p \in v^{+}(w)$, for $p \in \At$;

\medskip
    
      \item[\namedlabel{model:2}{2.}] $\mathcal{M}, w \vDash^{-} p$ iff $p \in v^{-}(w)$, for $p \in \At$;

\medskip 

    \item[\namedlabel{model:3}{3.}] $\mathcal{M}, w  \vDash^{+} \bot$ never holds;

\medskip

    \item[\namedlabel{model:4}{4.}] $\mathcal{M}, w \vDash^{-} \bot$ always holds;

\medskip

    \item[\namedlabel{model:5}{5.}] $\mathcal{M}, w \vDash^{+} \top$ always holds;

\medskip

   \item[\namedlabel{model:6}{6.}] $\mathcal{M}, w \vDash^{-} \top$ never holds;

\medskip

   \item[\namedlabel{model:7}{7.}] $\mathcal{M}, w \vDash^{+} \phi \land \psi$ iff $\mathcal{M}, w \vDash^{+} \phi$ and $\mathcal{M}, w \vDash^{+} \psi$;

\medskip

   \item[\namedlabel{model:8}{8.}] $\mathcal{M}, w \vDash^{-} \phi \land \psi$ iff  $\mathcal{M}, w \vDash^{-} \phi$ or $\mathcal{M}, w \vDash^{-} \psi$;

\medskip

   \item[\namedlabel{model:9}{9}] $\mathcal{M}, w \vDash^{+} \phi \lor \psi$ iff  $\mathcal{M}, w \vDash^{+} \phi$ or $\mathcal{M}, w \vDash^{+} \psi$;

\medskip

   \item[\namedlabel{model:10}{10.}]$\mathcal{M}, w\vDash^{-} \phi \lor \psi$ iff $\mathcal{M}, w\vDash^{-} \phi$ and $\mathcal{M}, w\vDash^{-} \psi$;

\medskip

   \item[\namedlabel{model:11}{11.}] $\mathcal{M}, w\vDash^{+} \phi \to \psi$ iff, for all $w' \geq w$, either $\mathcal{M}, w\nvDash^{+} \phi$ or $\mathcal{M}, w\vDash^{+} \psi$;

\medskip

   \item[\namedlabel{model:12}{12.}] $\mathcal{M}, w\vDash^{-} \phi \to \psi$ iff $\mathcal{M}, w\vDash^{+} \phi$ and $\mathcal{M}, w\vDash^{-} \psi$;

\medskip

   \item[\namedlabel{model:13}{13.}] $\mathcal{M}, w\vDash^{+} \phi \mapsfrom \psi$ iff $\mathcal{M}, w \vDash^{+} \phi$ and $ \mathcal{M}, w \vDash^{-} \psi$;

\medskip

   \item[\namedlabel{model:14}{14.}] $\mathcal{M}, w \vDash^{-} \phi \mapsfrom \psi$ iff, for all $w' \geq w$, either $\mathcal{M}, w\nvDash^{-} \psi$ or $\mathcal{M}, w\vDash^{-} \phi$;

\medskip

   \item[\namedlabel{model:15}{15.}] $\mathcal{M} \vDash^{+} \phi$ iff $\mathcal{M}, w \vDash^{+} \phi$ for all $w$ in the set $I$ of $\mathcal{M}$;

\medskip

   \item[\namedlabel{model:16}{16.}] $\mathcal{M} \vDash^{-} \phi$ iff $\mathcal{M}, w \vDash^{-} \phi$ for all $w$ in the set $I$ of $\mathcal{M}$

\medskip

   \item[\namedlabel{model:17}{17.}] $\vDash^{+} \phi$ iff $\mathcal{M} \vDash^{+} \phi$ for all $\mathcal{M}$;

\medskip

   \item[\namedlabel{model:18}{18.}]$\vDash^{-} \phi$ iff $\mathcal{M} \vDash^{-} \phi$ for all $\mathcal{M}$;

\end{enumerate}

\end{definition}

Clauses for negation $(\neg \phi)$ and co-negation ($-\phi$) are also defined in \cite{Wan110.1093/logcom/ext035}, but they are not strictly necessary because $\neg \phi$ is equivalent to $\phi \to \bot$ and $- \phi$ to $\top \mapsfrom \phi$. It is shown there that $\vDash^{+} \phi$ holds if and only if there is a deduction of $\phi$ in $\BintN$ from the empty set of premises that ends with a proof rule, and $\vDash^{-} \phi$ holds if and only if there is a deduction of $\phi$ in $\BintN$ from the empty set of premises that ends with a refutation rule. Moreover, for $\phi$ not containing co-implications in its subformulas, $\vDash^{+} \phi$ holds if and only if $\phi$ is derivable from empty premises in intuitionistic logic; for $\phi$ not containing implications in its subformulas, $\vDash^{-} \phi$ holds if and only if $\phi$ is derivable from empty premises in bi-intuitionistic logic. The restriction on subformulas is necessary because intuitionistic logic does not contain $\mapsfrom$ and dual intuitionistic logic does not contain $\to$ in their respective languages (because proof conditions for $\mapsfrom$ and refutation condutions for $\to$ both rely on definitions of duality absent in these logics).

Although entailment relations are not defined in \cite{Wan110.1093/logcom/ext035}, they could have been defined as follows \cite{Agudelo-Agudelo02012024}:

\begin{definition}
    Semantic entailment for $\Bint$ is defined as follows:

    \begin{enumerate}
        \item $\Gamma; \Delta \vDash^{+} \chi$ iff, for every $\mathcal{M}$ and $w$, if $\mathcal{M}, w \vDash^{+} \phi$ for all $\phi \in \Gamma$ and $\mathcal{M}, w \vDash^{-} \psi$ for all $\psi \in \Delta$  then $\mathcal{M}, w \vDash^{+} \phi$.
        \item $\Gamma; \Delta \vDash^{-} \chi$ iff, for every $\mathcal{M}$ and $w$, if $\mathcal{M}, w \vDash^{+} \phi$ for all $\phi \in \Gamma$ and $\mathcal{M}, w \vDash^{-} \psi$ for all $\psi \in \Delta$  then $\mathcal{M}, w \vDash^{-} \phi$.
    \end{enumerate}
\end{definition}

Such clauses are defined in \cite{Agudelo-Agudelo02012024} by considering signed formulas $+\phi$ and $-\phi$ and a single notion of support instead of two, but the definitions are easily seen to be equivalent. In fact, bilateral systems defined using signed formulas are just notational variants of systems defined with multiples rules or derivability relations  \cite{FrancezEquivalentlogics3020002}.

\section{Bilateral base-extension semantics}

\subsection{Basic definitions}

Since bilateralism diverges from traditional accounts by considering proofs and refutations as equally primitive, the natural first step towards a bilateral base-extension semantics is to define bases in which atomic proofs and refutations are primitive:

\begin{definition}[Bilateral atomic systems]\label{def:bilateralbase}
A {\em bilateral atomic system} (a.k.a. a {\em bilateral base}, or just {\em base}) $\mathcal{B}$ is a (possibly empty) set of atomic rules of the form
\begin{prooftree}
\AxiomC{$[\Gamma^{1}_{\At}]; \llbracket \Delta^{1}_{\At} \rrbracket$}
\noLine
\UnaryInfC{.}
\noLine
\UnaryInfC{.}
\noLine
\UnaryInfC{.}
\UnaryInfC{$p_1$}
\AxiomC{$\ldots$}
\AxiomC{$[\Gamma^{n}_{\At}]; \llbracket \Delta^{n}_{\At} \rrbracket$}
\noLine
\UnaryInfC{.}
\noLine
\UnaryInfC{.}
\noLine
\UnaryInfC{.}
\doubleLine
\UnaryInfC{$p_n$}
\TrinaryInfC{$p$}
\DisplayProof \qquad
\AxiomC{$[\Gamma^{1}_{\At}]; \llbracket \Delta^{1}_{\At} \rrbracket$}
\noLine
\UnaryInfC{.}
\noLine
\UnaryInfC{.}
\noLine
\UnaryInfC{.}
\UnaryInfC{$p_1$}
\AxiomC{$\ldots$}
\AxiomC{$[\Gamma^{n}_{\At}]; \llbracket \Delta^{n}_{\At} \rrbracket$}
\noLine
\UnaryInfC{.}
\noLine
\UnaryInfC{.}
\noLine
\UnaryInfC{.}
\doubleLine
\UnaryInfC{$p_n$}
\doubleLine
\TrinaryInfC{$p$}
\end{prooftree}
where $p_i,p\in\At$, $\Gamma^{i}_{\At}$ is a (possibly empty) set of atomic proof assumptions and  $\Delta^{i}_{\At}$ is a (possibly empty) set of atomic refutation assumptions. The sequence $\langle p^{1}, ... , p^{n}\rangle$ of premises in a rule can be empty -- in this case the rule is called an {\em atomic axiom}. The sets $\Gamma^{i}_{\At}, \Delta^{i}_{\At}$ can once again be omitted when empty. The atom $p$ is the rule's \textit{conclusion}.
\end{definition}

\begin{definition} \label{def:proofrulesrefutrulesandaxioms}
    An atomic rule is called a \textit{proof rule} if a single line appears above its conclusion, and a \textit{refutation rule} if a double line appear above it instead. Atomic axioms may also be called \textit{proof axioms} if they are proof rules and \textit{refutation axioms} otherwise.
\end{definition}

As in usual bilateralist definitions of deduction, whenever a single line appears above a rule premise $p^i$ the rule requires either a proof assumption with shape $p^i$ or a deduction whose last rule application is a proof rule and whose conclusion is $p^i$ to be applied, and whenever a double line appears above a rule premise $p^i$ the rule requires either a refutation assumption with shape $p^i$ or a deduction whose last rule application is a refutation rule and whose conclusion is $p^i$ to be applied.

Since from now on we only deal with bilateral bases, we will simply refer to them as \textit{bases} or \textit{atomic systems} for brevity. We also speak plainly of \textit{atomic rules} when referring to proof and refutation rules indistinctly. The same holds for the remaining definitions of bilateral base-extension semantics.

\begin{definition}[Extensions]\label{def:exts2}
An atomic system $\mathcal{C}$ is an {\em extension} of an atomic system $\mathcal{B}$ (written $\mathcal{C} \supseteq \mathcal{B}$), if $\mathcal{C}$ results from adding a (possibly empty) set of atomic rules to $\mathcal{B}$.   
\end{definition}


\begin{definition}[Proof deductions]\label{def:deducatomicproofs} The consequence $\Gamma_{\At}; \Delta_{\At} \vdash^{+}_\mathcal{B} p$ holds if and only if there is a natural deduction derivation with conclusion $p$ depending on the open atomic proof assumptions $\Theta_{\At}$ and open atomic refutation assumptions $\Sigma_{\At}$  (for some $\Theta_{\At} \subseteq \Gamma_{\At}$ and $\Sigma_{\At} \subseteq \Delta_{\At}$) that only uses the rules of $\mathcal{B}$, which either consists in a single occurrence of a proof assumption or whose last rule application is a proof rule.
\end{definition}

\begin{definition}[Refutation deductions]\label{def:deducatomicrefs} The consequence $\Gamma_{\At}; \Delta_{\At} \vdash^{-}_\mathcal{B} p$ holds if and only if there is a natural deduction derivation with conclusion $p$ depending on the open atomic proof assumptions $\Theta_{\At}$ and open atomic refutation assumptions $\Sigma_{\At}$  (for some $\Theta_{\At} \subseteq \Gamma_{\At}$ and $\Sigma_{\At} \subseteq \Delta_{\At}$) that only uses the rules of $\mathcal{B}$, which either consists in a single occurrence of a refutation assumption or whose last rule application is a refutation rule.
\end{definition}

\begin{definition}[Atomic proofs] An \textit{atomic proof} is a proof deduction depending on no proof or refutation assumptions. We denote by $\vdash^{+}_\mathcal{B} p$ the existence of an atomic proof of $p$ in $\mathcal{B}$.
    
\end{definition}

\begin{definition}[Atomic refutations] An \textit{atomic refutation} is a refutation deduction depending on no proof or refutation assumptions. We denote by $\vdash^{-}_\mathcal{B} p$ the existence of an atomic refutation of $p$ in $\mathcal{B}$.
    
\end{definition}

Once again, notice that that if  $p \in \{p_{1}, ... , p_{n}\}$ then $p_1, ..., p_{n}; \Gamma_{\At} \vdash^{+}_{\mathcal{B}} p$ and $\Gamma_{\At} ;  \{p_1, ..., p_{n} \}  \vdash^{-}_{\mathcal{B}} p$ regardless of the $\mathcal{B}$ and $\Gamma_{\At}$ due to how deducibility is defined.

To exemplify how the new definition works and fix some notations for deductions, consider a base $\mathcal{B}$ containing the following rules:

\begin{prooftree}
    \AxiomC{}
    \noLine
    \UnaryInfC{}
     \noLine
    \UnaryInfC{}
     \noLine
    \UnaryInfC{}
     \noLine
    \UnaryInfC{}
        \noLine
    \UnaryInfC{}
    \UnaryInfC{$p$}
    \AxiomC{}
     \noLine
    \UnaryInfC{}
     \noLine
    \UnaryInfC{}
     \noLine
    \UnaryInfC{}
     \noLine
    \UnaryInfC{}
        \noLine
    \UnaryInfC{}
    \doubleLine
    \UnaryInfC{$q$}
       \RightLabel{\tiny{$R1$}}
    \BinaryInfC{$r$}
    \DisplayProof
    \qquad
       \AxiomC{}
        \noLine
    \UnaryInfC{}
     \noLine
    \UnaryInfC{}
     \noLine
    \UnaryInfC{}
     \noLine
    \UnaryInfC{}
        \noLine
    \UnaryInfC{}
    \UnaryInfC{$p$}
    \AxiomC{}
     \noLine
    \UnaryInfC{}
     \noLine
    \UnaryInfC{}
     \noLine
    \UnaryInfC{}
     \noLine
    \UnaryInfC{}
        \noLine
    \UnaryInfC{}
    \doubleLine
    \UnaryInfC{$q$}
    \doubleLine
       \RightLabel{\tiny{$R2$}}
    \BinaryInfC{$r$}
    \DisplayProof
    \qquad
    \AxiomC{}
     \noLine
    \UnaryInfC{}
     \noLine
    \UnaryInfC{}
     \noLine
    \UnaryInfC{}
     \noLine
    \UnaryInfC{}
    \noLine
    \UnaryInfC{}
        \noLine
    \UnaryInfC{}
        \noLine
    \UnaryInfC{}
       \RightLabel{\tiny{$R3$}}
    \UnaryInfC{$p$}
    \DisplayProof
    \qquad
    \AxiomC{$\llbracket q \rrbracket$}
    \noLine
    \UnaryInfC{.}
    \noLine
    \UnaryInfC{.}
    \noLine
    \UnaryInfC{.}
    \UnaryInfC{$r$}
    \RightLabel{\tiny{$R4$}}
    \UnaryInfC{$s$}
\end{prooftree}

The following are deductions showing $\emptyset; q \vdash^{-}_{\mathcal{B}} r$ and $\emptyset; \emptyset \vdash^{+}_{\mathcal{B}} s$ (or simply $\vdash^{+}_{\mathcal{B}} s$), respectively:

\begin{prooftree}
    \AxiomC{}
    \RightLabel{\tiny{R3}}
    \UnaryInfC{$p$}
    \AxiomC{$\emptyset ; q$}
    \doubleLine
    \RightLabel{\tiny{$R2$}}
    \BinaryInfC{$r$}
    \DisplayProof
    \qquad \quad
      \AxiomC{}
    \RightLabel{\tiny{R3}}
    \UnaryInfC{$p$}
    \AxiomC{$\llbracket q \rrbracket^{1}$}
    \RightLabel{\tiny{$R1$}}
    \BinaryInfC{$r$}
    \RightLabel{\tiny{$R4$, 1}}
    \UnaryInfC{$s$}
\end{prooftree}

So open proof assumptions with shape $q$ are denoted by $q ; \emptyset$ and open refutation assumptions with shape $q$ are denoted by $\emptyset; q$, because if we simply wrote down the atom $q$ there would be no way of saying if it was a proof assumption or a refutation assumption. This is not necessary when we consider discharged assumptions because the brackets we use already specify whether it is a proof or a refutation assumption. 

Notice that, although we could represent proof assumptions and refutation assumptions by writing either a line or a double line above their occurrences, since bases might contain atomic axioms (such as the $R3$ depicted above) the notation for assumptions and atomic axioms would coincide, possibly leading to ambiguity (unless either rules or assumptions are always labeled to clarify their nature). We opt for this notation instead. If our definitions also took into account higher-order atomic rules capable of discharging other atomic rules \cite{Piecha2016, Peter} this would not be an issue because there would be no meaningful distinction between atomic axioms and atomic assumptions, but this would also lead to a significantly more complex notion of atomic deduction.

Once the basic definitions are established, support definitions for proofs and refutations in bases can be defined for all formulas through semantic clauses. Just like the model-theoretic relations $\mathcal{M}, w\vDash^{+} \phi$ and $\mathcal{M}, w\vDash^{-} \phi$ are respectively used to define support for the truth and falsity of $\phi$ in the state $w$ of the model $\mathcal{M}$, the relation $\Vdash^{+}_{\mathcal{B}}$ is meant to represent proof support in $\mathcal{B}$ ($\Vdash^{+}_{\mathcal{B}} \phi$ iff there is a proof of $\phi$ in $\mathcal{B}$), and $\Vdash^{-}_{\mathcal{B}}$ to represent refutation support in $\mathcal{B}$ ($\Vdash^{-}_{\mathcal{B}} \phi$ iff there is a refutation of $\phi$ in $\mathcal{B}$). Material entailment for a base $\mathcal{B}$ is defined by taking into account extensions of $\mathcal{B}$ capable of proving and refuting the formulas in the antecedent and whether they always yield a proof/refutation of the consequent or not.  Logical entailment relations $\Vdash^{+}$ and $\Vdash^{-}$ are defined by considering what can be proved/refuted in all bases (when the antecedent is empty) and which material consequences hold in all bases (when it is not).


\begin{definition}[Bilateral validity]\label{def:bilateralvalidity}
 \normalfont   The relations $\Vdash_{\mathcal{B}}^{+}, \Vdash_{\mathcal{B}}^{-}, \Vdash^{+} and \Vdash^{-}$ are defined as follows:

\begin{enumerate}
    \item[\namedlabel{c:at+}{($\At +$)}] $\Vdash_{\mathcal{B}}^{+} p$ iff $\vdash^{+}_{\mathcal{B}} p$, for $p \in \At$;

\medskip
    
        \item[\namedlabel{c:at-}{$(\At -)$}] $\Vdash_{\mathcal{B}}^{-} p$ iff $\vdash^{-}_{\mathcal{B}} p$, for $p \in \At$;

\medskip 

\item[\namedlabel{c:bot+}{($\bot +$)}] $\Vdash_\mathcal{B}^{+} \bot$ iff $\Vdash_\mathcal{B}^{+} p$ and $\Vdash_\mathcal{B}^{-} p$ holds for every $p \in \At$;

\medskip

\item[\namedlabel{c:bot-}{($\bot -$)}] $\Vdash_\mathcal{B}^{-} \bot$ holds for all $\mathcal{B}$;

\medskip

\item[\namedlabel{c:top+}{($\top +$)}] $\Vdash_\mathcal{B}^{+} \top$ holds for all $\mathcal{B}$;

\medskip

\item[\namedlabel{c:top-}{($\top -$)}] $\Vdash_\mathcal{B}^{-} \top$ iff $\Vdash_\mathcal{B}^{+} p$ and $\Vdash_\mathcal{B}^{-} p$ holds for every $p \in \At$;

\medskip

\item[\namedlabel{c:and+}{($\land +$)}] $\Vdash_{\mathcal{B}}^{+} \phi \land \psi$ iff $\Vdash_{\mathcal{B}}^{+} \phi$ and $\Vdash_{\mathcal{B}}^{+} \psi$

\medskip

\item[\namedlabel{c:and-}{($\land -$)}] $\Vdash_{\mathcal{B}}^{-} \phi \land \psi$ iff  $ \forall \mathcal{C} (\mathcal{C} \supseteq \mathcal{B})$ and $\forall p (p \in \At)$ : ($ \emptyset ; \phi \Vdash_{\mathcal{C}}^{+} p$ and $\emptyset ;\psi \Vdash_{\mathcal{C}}^{+} p$ implies $\Vdash_{\mathcal{C}}^{+} p$) and ($ \emptyset ; \phi \Vdash_{\mathcal{C}}^{-} p$ and $\emptyset ;\psi \Vdash_{\mathcal{C}}^{-} p$ implies $\Vdash_{\mathcal{C}}^{-} p$);

\medskip

\item[\namedlabel{c:or+}{($\lor +$)}] $\Vdash_{\mathcal{B}}^{+} \phi \lor \psi$ iff  $ \forall \mathcal{C} (\mathcal{C} \supseteq \mathcal{B})$ and $\forall p (p \in \At)$ $:  (\phi ;\emptyset \Vdash_{\mathcal{C}}^{+} p$ and $\psi ; \emptyset \Vdash_{\mathcal{C}}^{+} p$ implies $\Vdash_{\mathcal{C}}^{+} p)$ and $(\phi ;\emptyset \Vdash_{\mathcal{C}}^{-} p$ and $\psi ; \emptyset \Vdash_{\mathcal{C}}^{-} p$ implies $\Vdash_{\mathcal{C}}^{-} p)$;
\medskip

\item[\namedlabel{c:or-}{($\lor -$)}] $\Vdash_{\mathcal{B}}^{-} \phi \lor \psi$ iff $\Vdash_{\mathcal{B}}^{-} \phi$ and $\Vdash_{\mathcal{B}}^{-} \psi$;

\medskip

\item[\namedlabel{c:to+}{($\to +$)}] $\Vdash_\mathcal{B}^{+} \phi \to \psi$ iff $\phi ; \emptyset \Vdash_\mathcal{B}^{+} \psi$;

\medskip

\item[\namedlabel{c:to-}{($\to -$)}] $\Vdash_\mathcal{B}^{-} \phi \to \psi$ iff $\Vdash_\mathcal{B}^{+} \phi$ and $\Vdash_\mathcal{B}^{-} \psi$;

\medskip

\item[\namedlabel{c:mapsfrom+}{($\mapsfrom +$)}] $\Vdash_\mathcal{B}^{+} \phi \mapsfrom \psi$ iff $\Vdash_\mathcal{B}^{+} \phi$ and $\Vdash_\mathcal{B}^{-} \psi$;

\medskip

\item[\namedlabel{c:mapsfrom-}{($\mapsfrom -$)}] $\Vdash_\mathcal{B}^{-} \phi \mapsfrom \psi$ iff $\emptyset ; \psi \Vdash_\mathcal{B}^{-} \phi$;





\medskip

\item[\namedlabel{c:inf+}{(Inf $+$)}] For non-empty $\Gamma \cup \Delta$, $\Gamma ; \Delta \Vdash^{+}_{\mathcal{B}} \chi$ iff $ \forall \mathcal{C} (\mathcal{C} \supseteq \mathcal{B}) : \ $ if $ \ \Vdash_{\mathcal{C}}^{+} \phi$ for all $\phi \in \Gamma$ and $\Vdash_{\mathcal{C}}^{-} \psi$ for all $\psi \in \Delta$ then $\Vdash_{\mathcal{C}}^{+} \chi$.

\medskip

\item[\namedlabel{c:inf-}{(Inf $-$)}] For non-empty $\Gamma \cup \Delta$, $\Gamma ; \Delta \Vdash^{-}_{\mathcal{B}} \chi$ iff $ \forall\mathcal{C} (\mathcal{C} \supseteq \mathcal{B}) : \ $if$ \ \Vdash_{\mathcal{C}}^{+} \phi$ for all $\phi \in \Gamma$ and $\Vdash_{\mathcal{C}}^{-} \psi$ for all $\psi \in \Delta$ then $\Vdash_{\mathcal{C}}^{-} \chi$.

\medskip

\item[\namedlabel{c:2int+}{($\Bint +$)}] $\Gamma ; \Delta \Vdash^{+} \phi$ iff $\Gamma ; \Delta \Vdash^{+}_{\mathcal{B}} \phi$ for all $\mathcal{B}$;

\medskip

\item[\namedlabel{c:2int-}{($\Bint -$)}] $\Gamma ; \Delta \Vdash^{-} \phi$ iff $\Gamma ; \Delta \Vdash^{-}_{\mathcal{B}} \phi$ for all $\mathcal{B}$;

\end{enumerate}

\end{definition}

From now on we write $\nvdash^{*}_{\mathcal{B}}$, $\nVdash^{*}_{\mathcal{B}}$ and $\nVdash^{*}$ for $* \in \{+, -\}$ to respectively denote that $\vdash^{*}_{\mathcal{B}}$, $\Vdash^{*}_{\mathcal{B}}$ and $\Vdash^{*}$ does not hold. We are, of course, also implicitly equating the relations $\emptyset ; \emptyset \Vdash^{+}_{\mathcal{B}} \phi$ and $\emptyset ; \emptyset \Vdash^{-}_{\mathcal{B}} \phi$ with the relations $\Vdash^{+}_{\mathcal{B}} \phi$ and $ \Vdash^{-}_{\mathcal{B}} \phi$ (respectively), as well as $\emptyset ; \emptyset \Vdash^{+} \phi$ and $\emptyset ; \emptyset \Vdash^{-} \phi$ with $ \Vdash^{+} \phi$ and $ \Vdash^{-} \phi$ (respectively).

Clearly, atomic proofs and refutations now play the same role of the functions $v^{+}$ and $v^{-}$ of Definition \ref{def:Kripkemodelbilateral}. A few differences between the clauses of Definitions \ref{def:bilateralvalidity} and \ref{def:modelvalidity} are worth noticing. Clauses \ref{c:bot+}, \ref{c:top-}, \ref{c:or+} and \ref{c:and-} have to be stated as they are to avoid well-known issues with base-extension definitions using metalinguistic disjunction and semantic unsatisfiability \cite{piecha2015failure}. We could also equivalently define $\bot$ and $\top$ in terms of syntactic constraints on bases by considering them atoms, forbidding bases from containing proofs of $\bot$ and refutations of $\top$  (similar to what is done in \cite{nascimento2025ecumenicalviewprooftheoreticsemantics}) and requiring them to always contain a proof of $\top$ and a refutation of $\bot$, but that would make the completeness proof reliant on proof-theoretic consistency results.

\subsection{Basic lemmata}

Before proceeding to the soundness, completeness and semantic harmony proofs, we prove a few useful lemmata. An asterisk $*$ will sometimes be used as a placeholder for $+$ and $-$ when the result are shown for both signs, but whenever an asterisk is used in a statement or proof all occurrences of it must be uniformly replaced by the same sign.

\begin{lemma} \label{lemma:atomicsupportiffderivability}
 $\Gamma_{\mathcal{\At}}; \Delta_{\At} \Vdash^{*}_{\mathcal{B}} p$ iff $\Gamma_{\mathcal{\At}}; \Delta_{\At} \vdash^{*}_{\mathcal{B}} p$
\end{lemma}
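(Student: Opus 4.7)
The plan is to split on whether $\Gamma_{\At}\cup\Delta_{\At}$ is empty or not, and, in the non-empty case, to shuttle between derivability and support via a canonical extension that turns assumptions into atomic axioms.

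\textbf{Base case.} When $\Gamma_{\At}=\Delta_{\At}=\emptyset$, the equivalence is exactly clause \ref{c:at+} (when $*={+}$) or clause \ref{c:at-} (when $*={-}$).

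\textbf{Inductive step (non-empty context).} Assume $\Gamma_{\At}\cup\Delta_{\At}\neq\emptyset$. For the direction from derivability to support, I would argue semantically: fix an arbitrary extension $\mathcal{C}\supseteq\mathcal{B}$ satisfying $\Vdash^{+}_{\mathcal{C}} q$ for every $q\in\Gamma_{\At}$ and $\Vdash^{-}_{\mathcal{C}} r$ for every $r\in\Delta_{\At}$. By \ref{c:at+} and \ref{c:at-}, each such $q$ has an atomic proof in $\mathcal{C}$ and each such $r$ an atomic refutation in $\mathcal{C}$. Since $\mathcal{B}\subseteq\mathcal{C}$, the hypothesised derivation witnessing $\Gamma_{\At};\Delta_{\At}\vdash^{*}_{\mathcal{B}} p$ is already a derivation in $\mathcal{C}$, and I would graft the atomic proofs (respectively refutations) of the $q$'s and $r$'s onto the corresponding open proof and refutation assumptions. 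The resulting derivation uses only rules of $\mathcal{C}$, has no open assumptions, and still ends with the same rule application as the original (since grafting happens strictly above the open leaves); hence it witnesses $\vdash^{*}_{\mathcal{C}} p$, giving $\Vdash^{*}_{\mathcal{C}} p$ by \ref{c:at+}/\ref{c:at-}. The clauses \ref{c:inf+} and \ref{c:inf-} then yield $\Gamma_{\At};\Delta_{\At}\Vdash^{*}_{\mathcal{B}} p$.

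For the converse, I would work with a canonical witnessing base. Let $\mathcal{C}$ be the extension of $\mathcal{B}$ obtained by adding a proof axiom (with no premises, single-line conclusion) for each $q\in\Gamma_{\At}$ and a refutation axiom (no premises, double-line conclusion) for each $r\in\Delta_{\At}$. Then by \ref{c:at+}/\ref{c:at-} we have $\Vdash^{+}_{\mathcal{C}} q$ and $\Vdash^{-}_{\mathcal{C}} r$ for the appropriate atoms. Applying the hypothesised support together with \ref{c:inf+}/\ref{c:inf-} gives $\Vdash^{*}_{\mathcal{C}} p$, hence an atomic derivation witnessing $\vdash^{*}_{\mathcal{C}} p$ by \ref{c:at+}/\ref{c:at-}. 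Now I would pull this derivation back to $\mathcal{B}$ by replacing each instance of an added axiom for $q$ by the open proof assumption $q;\emptyset$ and each instance of an added axiom for $r$ by the open refutation assumption $\emptyset;r$. This transformation preserves the shape (single versus double line) of each inference and, crucially, the type of the last rule application, producing a derivation in $\mathcal{B}$ with open assumptions contained in $\Gamma_{\At};\Delta_{\At}$ and conclusion $p$, i.e.\ $\Gamma_{\At};\Delta_{\At}\vdash^{*}_{\mathcal{B}} p$.

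The main obstacle is the bookkeeping around the distinction between proof rules and refutation rules: both the grafting step and the canonical-base step must preserve whether the final inference is drawn with a single or a double line, and the substituted assumptions must likewise match the line style expected at each replaced leaf. Because the added axioms are constructed with the correct line style and the grafted atomic proofs/refutations end with rules of the correct type, this is handled uniformly by reading the placeholder $*$ consistently throughout the argument.
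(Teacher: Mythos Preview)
Your proposal is correct and follows essentially the same approach as the paper: for $\Vdash\Rightarrow\vdash$ you build the canonical extension $\mathcal{C}$ by adding proof/refutation axioms for the atoms in $\Gamma_{\At}$ and $\Delta_{\At}$ and then pull the resulting closed derivation back to $\mathcal{B}$ by replacing those axioms with open assumptions, and for $\vdash\Rightarrow\Vdash$ you graft atomic proofs/refutations onto the open leaves of the given derivation in an arbitrary qualifying extension. Two small remarks: what you call the ``inductive step'' is really just the non-empty case of a case split (no induction is taking place), and your claim that the transformations preserve ``the type of the last rule application'' needs a word about the degenerate case where the derivation is a single assumption (or becomes one after replacement) --- there is then no last rule application, but Definitions~\ref{def:deducatomicproofs} and~\ref{def:deducatomicrefs} already cover this, so the conclusion still holds.
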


\begin{proof} \

\begin{itemize}
    \item[($\Rightarrow$)] Assume $\Gamma_{\mathcal{\At}} \Vdash^{*}_{\mathcal{B}} p$. Let $\mathcal{C}$ be the base obtained by adding to $\mathcal{B}$ one proof axiom with conclusion $q$ for every $q \in \Gamma_{\At}$ and one refutation axiom with conclusion $r$ for every $r \in \Delta_{\At}$. By applying each of these rules once we get $\vdash^{+}_{\mathcal{C}} q$ and thus $\Vdash^{+}_{\mathcal{B}} q$ for all $q \in \Gamma_{\At}$ by \ref{c:at+}, as well as $\vdash^{-}_{\mathcal{C}} r$ and thus $\Vdash^{-}_{\mathcal{B}} r$ for all $r \in \Delta_{\At}$ by \ref{c:at-}. Since $\Gamma_{\mathcal{\At}} \Vdash^{*}_{\mathcal{B}} p$ holds by assumption, we get $\Vdash^{*}_{\mathcal{C}} q$ by either \ref{c:inf+} or \ref{c:inf-}, depending on what the original $*$ was, so by either \ref{c:at+} or \ref{c:at-} also $\vdash^{*}_{\mathcal{C}}q$. Now consider the shape of the deduction $\Pi$ showing $\vdash^{*}_{\mathcal{C}}q$. If the new rules added to $\mathcal{B}$ in order to obtain $\mathcal{C}$ were not used at all, $\Pi$ is already a deduction in $\mathcal{B}$, so $\vdash^{*}_{\mathcal{B}}q$ and then $\Gamma_{\mathcal{\At}}; \Delta_{\At} \vdash^{*}_{\mathcal{B}} p$ by either Definition \ref{def:deducatomicproofs} or \ref{def:deducatomicrefs}. If $\Pi$ does use one of the new rules, let $\Pi'$ be the deduction obtained by replacing every application of one of the new proof axioms with conclusion $q$ by a proof assumption with shape $q$, as well as every application of one of the new refutation axioms with conclusion $r$ by a refutation assumption with shape $r$.  Since all of the new rules were replaced, every rule of $\Pi'$ is on $\mathcal{B}$. It is straightforward to check that, due to the shape of the new rules of $\mathcal{C}$, all new proof assumptions of $\Pi'$ are in $\Gamma_{\At}$ and all new refutation assumptions are in $\Delta_{\At}$, so $\Pi'$ is a deduction showing $\Theta_{\At}; \Sigma_{\At} \vdash^{*}_{\mathcal{B}} p$ for some $\Theta_{\At} \subseteq \Gamma_{\At}$ and $\Sigma_{\At} \subseteq \Delta_{\At}$, hence once again by either Definition \ref{def:deducatomicproofs} or \ref{def:deducatomicrefs} we get $\Gamma_{\mathcal{\At}}; \Delta_{\At} \vdash^{*}_{\mathcal{B}} p$.

    \medskip

    \item[$(\Leftarrow)$] Assume $\Gamma_{\mathcal{\At}}; \Delta_{\At} \vdash^{*}_{\mathcal{B}} p$. Pick any $\mathcal{C} \supseteq \mathcal{B}$ with $\Vdash^{+}_{\mathcal{C}} q$ for every $q \in \Gamma_{\At}$ and  $\Vdash^{-}_{\mathcal{C}} r$ for every $q \in \Delta_{\At}$. Then by \ref{c:at+} we have $\vdash^{+}_{\mathcal{C}} q$ for every $q \in \Gamma_{\At}$ and by \ref{c:at-} we have $\vdash^{-}_{\mathcal{C}} r$ for every $r \in \Delta_{\At}$. Since $\Gamma_{\mathcal{\At}}; \Delta_{\At} \vdash^{*}_{\mathcal{B}} p$ holds by assumption and $\mathcal{C} \supseteq \mathcal{B}$ then the rules used in the deduction showing this are all in $\mathcal{C}$, so $\Gamma_{\mathcal{\At}}; \Delta_{\At} \vdash^{*}_{\mathcal{C}} p$. By composing the proofs of $q$ for $q \in \Gamma_{\At}$ and the refutations of $r$ for $r \in \Delta_{\At}$ with the open premises of the deduction showing $\Gamma_{\mathcal{\At}}; \Delta_{\At} \vdash^{*}_{\mathcal{C}} p$ we get a deduction showing $\vdash^{*}_{\mathcal{C}} p$, hence $\Vdash^{*}_{\mathcal{C}} p$ by either \ref{c:at+} or \ref{c:at-}. Since $\mathcal{C}$ is an arbitrary extension of $\mathcal{B}$ with $\Vdash^{+}_{\mathcal{C}} q$ for every $q \in \Gamma_{\At}$ and  $\Vdash^{-}_{\mathcal{C}} r$ for every $q \in \Delta_{\At}$ we conclude $\Gamma_{\mathcal{\At}}; \Delta_{\At} \Vdash^{*}_{\mathcal{B}} p$ by either \ref{c:inf+} or \ref{c:inf-}.

    \end{itemize}

\end{proof}

\begin{lemma}(Monotonicity)
If $\Gamma ; \Delta \Vdash^{*}_{\mathcal{B}} \chi$ and $\mathcal{C} \supseteq \mathcal{B}$ then $\Gamma ; \Delta \Vdash^{*}_{\mathcal{C}} \chi$.
\end{lemma}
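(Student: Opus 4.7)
The plan is to prove monotonicity by structural induction on the formula $\chi$, with the inferential case (non-empty $\Gamma\cup\Delta$) handled as a separate, and essentially trivial, observation that sits alongside the induction. The overarching reason monotonicity holds is that every clause in Definition \ref{def:bilateralvalidity} is either (i) reducible via Lemma \ref{lemma:atomicsupportiffderivability} to atomic derivability in the base, which is obviously preserved under adding more rules; (ii) a purely conjunctive condition on strictly simpler formulas, to which the IH applies; or (iii) already formulated with a universal quantifier over extensions, so monotonicity is immediate from the transitivity of $\supseteq$.

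First I would dispatch the inferential case. Suppose $\Gamma\cup\Delta\neq\emptyset$ and $\Gamma;\Delta\Vdash^{*}_{\mathcal{B}}\chi$. Take any $\mathcal{D}\supseteq\mathcal{C}$; since $\mathcal{C}\supseteq\mathcal{B}$, also $\mathcal{D}\supseteq\mathcal{B}$, so clause \ref{c:inf+} (resp.\ \ref{c:inf-}) applied to $\mathcal{B}$ yields: whenever $\Vdash^{+}_{\mathcal{D}}\phi$ for all $\phi\in\Gamma$ and $\Vdash^{-}_{\mathcal{D}}\psi$ for all $\psi\in\Delta$, then $\Vdash^{*}_{\mathcal{D}}\chi$. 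That is precisely the condition for $\Gamma;\Delta\Vdash^{*}_{\mathcal{C}}\chi$. No induction is required here.

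Next I would carry out the induction on $\chi$ (with $\Gamma=\Delta=\emptyset$). For the atomic base case, clauses \ref{c:at+} and \ref{c:at-} reduce support to $\vdash^{*}_{\mathcal{B}}p$; since every atomic derivation in $\mathcal{B}$ is also a derivation in $\mathcal{C}\supseteq\mathcal{B}$, we obtain $\vdash^{*}_{\mathcal{C}}p$, hence $\Vdash^{*}_{\mathcal{C}}p$. Clauses \ref{c:bot-} and \ref{c:top+} are vacuously preserved, and \ref{c:bot+}, \ref{c:top-} reduce to a universal condition on atoms which is preserved by the atomic case. The clauses \ref{c:and+}, \ref{c:or-}, \ref{c:to-}, \ref{c:mapsfrom+} are conjunctions of support statements on subformulas and follow directly from the IH. The clauses \ref{c:and-} and \ref{c:or+} quantify universally over extensions of the current base, so monotonicity is immediate by the same transitivity argument used for the inferential case. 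Finally, \ref{c:to+} and \ref{c:mapsfrom-} unfold to the inferential relations $\phi;\emptyset\Vdash^{+}_{\mathcal{B}}\psi$ and $\emptyset;\psi\Vdash^{-}_{\mathcal{B}}\phi$, whose monotonicity was already established.

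There is no real obstacle: every clause either has built-in universality over extensions or reduces to simpler support statements, and the atomic case is immediate from the fact that bases are extended only by adding rules. The only minor point requiring care is presentational — making clear that the inferential case is not part of the structural induction on $\chi$ but a separate, immediate consequence of the definition of \ref{c:inf+}/\ref{c:inf-}, so that no circularity arises when it is invoked in the clauses \ref{c:to+} and \ref{c:mapsfrom-}.
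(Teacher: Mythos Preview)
Your proposal is correct and takes essentially the same approach as the paper: induction on the structure of $\chi$ for the empty-context case, with the non-empty $\Gamma\cup\Delta$ case handled separately by transitivity of $\supseteq$. The only difference is presentational---you treat the inferential case first so it can be cleanly invoked in the $(\to+)$ and $(\mapsfrom-)$ steps, whereas the paper places it after the induction and unfolds those clauses directly; both orderings are sound since the inferential-case argument does not depend on the induction.
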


\begin{proof}
   We start by proving the result for $\Gamma \cup \Delta = \emptyset$ through induction on number of logical operators on the formula $\phi$, then proceeding to show it for $\Gamma \cup \Delta \neq \emptyset$. Notice that use of the induction hypothesis is necessary only in a handful of inductive steps.

   \bigskip

\begin{enumerate}
    \item[] \textbf{Case 1.} $\Gamma \cup \Delta = \emptyset$:
\end{enumerate}

\begin{enumerate}

        \item ($ \chi = p$ for some $p \in \At$).

        \medskip
        
          \noindent Assume $\Vdash_{\mathcal{B}}^{*}p$. By either \ref{c:at+} or \ref{c:at-} we have $\vdash_{\mathcal{B}}^{*}p$. Now pick any $\mathcal{C} \supseteq \mathcal{B}$. Since by the definition of extension all rules of $\mathcal{B}$ are also rules of $\mathcal{C}$ we conclude that the deduction showing $\vdash_{\mathcal{B}}^{*}p$ also shows $\vdash_{\mathcal{C}}^{*}p$, hence $\Vdash_{\mathcal{C}}^{*}p$ by either \ref{c:at+} or \ref{c:at-}.

        \medskip

        \item $(\chi =\bot)$.
        \medskip
        \begin{enumerate}
            \item[$(+)$] Assume $\Vdash_{\mathcal{B}}^{+}\bot$. By \ref{c:bot+} we have $\Vdash_{\mathcal{B}}^{+}p$ and $\Vdash_{\mathcal{B}}^{-}p$ for every $p\in\mathsf{At}$, so also $\vdash_{\mathcal{B}}^{+}p$ and $\vdash_{\mathcal{B}}^{-}p$ for every $p\in\mathsf{At}$ by \ref{c:at+} and \ref{c:at-}. Pick any $\mathcal{C} \supseteq \mathcal{B}$. The deductions showing $\vdash_{\mathcal{B}}^{+}p$ and $\vdash_{\mathcal{B}}^{-}p$ for every $p\in\mathsf{At}$ are also deductions in $\mathcal{C}$, hence $\Vdash_{\mathcal{C}}^{+}p$ and $\Vdash_{\mathcal{C}}^{-}p$ for every $p\in\mathsf{At}$ by \ref{c:at+} and \ref{c:at-} and $\Vdash^{+}_{\mathcal{C}} \bot$ by \ref{c:bot+}.
            
            \medskip
              \item[$(-)$]  $\Vdash_{\mathcal{C}}^{-}\bot$ already holds for any base $\mathcal{C}$ by definition.
            \medskip
        \end{enumerate}
        \medskip
        \item $(\chi = \phi\wedge \psi)$.
        \medskip
        \begin{enumerate}
            \item[$(+)$] Assume $\Vdash_{\mathcal{B}}^{+} \phi\wedge \psi$. By \ref{c:and+} we have $\Vdash_{\mathcal{B}}^{+} \phi$ and $\Vdash_{\mathcal{B}}^{+} \psi$. Pick any $\mathcal{C}\supseteq\mathcal{B}$. The induction hypothesis yields $\Vdash_{\mathcal{C}}^{+} \phi$ and $\Vdash_{\mathcal{C}}^{+} \psi$, whence $\Vdash_{\mathcal{C}}^{+} \phi\wedge \psi$.
            \medskip
            \item[$(-)$] Assume $\Vdash_{\mathcal{B}}^{-} \phi \wedge \psi$. Pick any $\mathcal{C}\supseteq \mathcal{B}$. Now pick any $\mathcal{D} \supseteq \mathcal{C}$ such that $\emptyset; \phi\Vdash_{\mathcal{D}}^{+}p$ and $\emptyset; \psi\Vdash_{\mathcal{D}}^{+}p$ for arbitrary $p \in \At$. By transitivity of extensions we have $\mathcal{D} \supseteq \mathcal{B}$, hence since $\Vdash_{\mathcal{B}}^{-} \phi \wedge \psi$ we conclude $\Vdash^{+}_{\mathcal{D}} p$ by \ref{c:and-}. A similar argument shows that if $\emptyset; \phi\Vdash_{\mathcal{D}}^{-}p$ and $\emptyset; \psi\Vdash_{\mathcal{D}}^{-}p$ then $\Vdash^{-}_{\mathcal{D}} p$ for arbitrary $p \in \At$. This means that, for arbitrary $\mathcal{C} \supseteq \mathcal{D}$, ($\emptyset; \phi\Vdash_{\mathcal{D}}^{+}p$ and $\emptyset; \psi\Vdash_{\mathcal{D}}^{+}p$ implies $\Vdash^{+}_{\mathcal{D}} p$) and (if $\emptyset; \phi\Vdash_{\mathcal{D}}^{-}p$ and $\emptyset; \psi\Vdash_{\mathcal{D}}^{-}p$ implies $\Vdash^{-}_{\mathcal{D}} p$), so by \ref{c:and-} we conclude $\Vdash_{\mathcal{C}}^{-} \phi \wedge \psi$

        \end{enumerate}
        \medskip
        \item $(\chi= \phi\rightarrow \psi)$.
        \begin{enumerate}
        \medskip
            \item[$(+)$] Assume $\Vdash_{\mathcal{B}}^{+} \phi \rightarrow \psi$. By \ref{c:to+} we have $\phi;\emptyset\Vdash_{\mathcal{B}}^{+} \psi$. Pick any $\mathcal{C} \supseteq \mathcal{B}$ and any $\mathcal{D} \supseteq \mathcal{C}$ with $\Vdash^{+}_{\mathcal{D}} \phi$. By transitivity of extensions we have $\mathcal{D} \supseteq \mathcal{B}$, so since $\phi;\emptyset\Vdash_{\mathcal{B}}^{+} \psi$ and $\Vdash^{+}_{\mathcal{D}} \phi$ we conclude $\Vdash^{+}_{\mathcal{D}} \psi$ by \ref{c:inf+}. But $\mathcal{D}$ is an arbitrary extension of $\mathcal{C}$ with $\Vdash^{+}_{\mathcal{D}} \phi$, so we conclude $\phi;\emptyset\Vdash_{\mathcal{C}}^{+} \psi$ by \ref{c:inf+} and then $\Vdash^{+}_{\mathcal{C}} \phi \to \psi$ by \ref{c:to+}.
            \medskip
            \item[$(-)$] Assume $\Vdash_{\mathcal{B}}^{-} \phi\rightarrow \psi$. By \ref{c:to-} we have $\Vdash_{\mathcal{B}}^{+}
            \phi$ and $\Vdash_{\mathcal{B}}^{-} \psi$. Pick any $\mathcal{C} \supseteq \mathcal{B}$. The induction hypothesis yields $\Vdash_{\mathcal{C}}^{+} \psi$ and $\Vdash_{\mathcal{C}}^{-} \psi$, whence $\Vdash_{\mathcal{B}}^{-} \phi\rightarrow \psi$ by \ref{c:to-}.  
        \end{enumerate}
        \medskip
       The $(+)$ cases for $\bot$, $\lor$ and $\mapsfrom$ are symmetrical with the $(-)$ cases for $\top$, $\land$ and $\rightarrow$, respectively, as are the $(-)$ cases for $\bot$, $\lor$ and $\mapsfrom$ and the $(+)$ cases for $\top$, $\land$ and $\rightarrow$, so we omit the remainder of the proof.

    \end{enumerate}

\begin{enumerate}
    \item[]  \textbf{Case 2.} $\Gamma \cup \Delta \neq \emptyset$.

    \medskip
    
   \noindent Pick any $\mathcal{C} \supseteq \mathcal{B}$. Now pick any $\mathcal{D} \supseteq \mathcal{C}$ with $\Vdash^{+}_{\mathcal{D}} \phi$ for all $\phi \in \Gamma$ and $\Vdash^{-}_{\mathcal{D}} \psi$ for all $\psi \in \Delta$. Since $\mathcal{D} \supseteq \mathcal{B}$ holds by transitivity of extension and $\Gamma ; \Delta \Vdash^{*}_{\mathcal{B}} \chi$ holds by assumption, by \ref{c:inf+} or  \ref{c:inf-} we conclude $\Vdash^{*}_{\mathcal{D}} \chi$. Since $\mathcal{D}$ was an arbitrary extension of $\mathcal{C}$ with $\Vdash^{+}_{\mathcal{D}} \phi$ for all $\phi \in \Gamma$ and $\Vdash^{-}_{\mathcal{D}} \psi$ for all $\psi \in \Delta$, by either \ref{c:inf+} or  \ref{c:inf-} we conclude $\Gamma ; \Delta \Vdash^{*}_{\mathcal{C}} \chi$.
\end{enumerate}

\end{proof}

\begin{lemma}\label{lemma:disjunctiongeneral}

 If $\Vdash^{+}_{\mathcal{B}} \phi \vee \psi$, $\phi;\emptyset\Vdash^{*}_{\mathcal{B}} \chi$ and $
 \psi;\emptyset\Vdash^{*}_{\mathcal{B}} \chi$, then $\Vdash^{*}_{\mathcal{B}} \chi$.

\end{lemma}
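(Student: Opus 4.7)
The plan is to proceed by induction on the complexity of $\chi$, with the atomic case $\chi = p \in \At$ following immediately from clause \ref{c:or+} applied with $\mathcal{C} := \mathcal{B}$. Throughout, monotonicity will be invoked freely to lift any hypothesis of the form $\phi;\emptyset \Vdash^{*}_{\mathcal{B}} \zeta$ to an arbitrary extension of $\mathcal{B}$, and I will also use that if $\phi;\emptyset \Vdash^{*}_{\mathcal{B}} \zeta$ then for any $\mathcal{C} \supseteq \mathcal{B}$ with $\Vdash^{+}_{\mathcal{C}} \phi$ we have $\Vdash^{*}_{\mathcal{C}} \zeta$.

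For the inductive step I would group the subcases according to the shape of the relevant clause in Definition \ref{def:bilateralvalidity}. The \emph{local} clauses \ref{c:and+}, \ref{c:to-}, \ref{c:mapsfrom+} and \ref{c:or-} reduce $\Vdash^{*}_{\mathcal{B}} \chi$ to support of proper subformulas. In these cases I extract, from each premise $\phi;\emptyset\Vdash^{*}_{\mathcal{B}} \chi$, the corresponding premises on the subformulas (by lifting to any $\mathcal{C} \supseteq \mathcal{B}$ with $\Vdash^{+}_{\mathcal{C}} \phi$ and unfolding the clause), do the same for $\psi$, apply the induction hypothesis to each subformula, and reassemble $\Vdash^{*}_{\mathcal{B}} \chi$ via the clause. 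The \emph{quantificational} clauses \ref{c:and-} and \ref{c:or+} require showing a universal statement over all $\mathcal{C} \supseteq \mathcal{B}$ and $p \in \At$; here I do not use the induction hypothesis at all: fixing such $\mathcal{C}$, $p$ and the assumed atomic-support conditions, I combine each $\phi;\emptyset\Vdash^{*}_{\mathcal{B}} \chi$ (lifted to $\mathcal{C}$) with those conditions to obtain $\phi;\emptyset\Vdash^{*'}_{\mathcal{C}} p$, and symmetrically for $\psi$, then apply the atomic case (i.e.\ \ref{c:or+}) to $\Vdash^{+}_{\mathcal{C}} \phi \lor \psi$ to conclude $\Vdash^{*'}_{\mathcal{C}} p$. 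The cases $\chi = \bot$ with $*=+$ and $\chi = \top$ with $*=-$ are handled in the same style by quantifying over all atoms $p$ and falling back on the atomic base case; the cases $\chi = \bot$ with $*=-$ and $\chi = \top$ with $*=+$ are immediate from \ref{c:bot-} and \ref{c:top+}.

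The delicate cases are $\to +$ and $\mapsfrom -$, whose clauses are local but feature a hypothetical right-hand side such as $\chi_1;\emptyset \Vdash^{+}_{\mathcal{B}} \chi_2$. For $\chi = \chi_1 \to \chi_2$ with $*=+$, I would fix an arbitrary $\mathcal{C} \supseteq \mathcal{B}$ with $\Vdash^{+}_{\mathcal{C}} \chi_1$ and show $\Vdash^{+}_{\mathcal{C}} \chi_2$; to do so, I first derive $\phi;\emptyset \Vdash^{+}_{\mathcal{C}} \chi_2$ by taking any $\mathcal{D} \supseteq \mathcal{C}$ with $\Vdash^{+}_{\mathcal{D}} \phi$, combining the lifted premise $\phi;\emptyset \Vdash^{+}_{\mathcal{D}} \chi_1 \to \chi_2$ with $\Vdash^{+}_{\mathcal{D}} \chi_1$ (which holds by monotonicity from $\mathcal{C}$) to obtain $\Vdash^{+}_{\mathcal{D}} \chi_2$; symmetrically for $\psi$; then I invoke the induction hypothesis on $\chi_2$ \emph{inside} $\mathcal{C}$, using $\Vdash^{+}_{\mathcal{C}} \phi \lor \psi$ (also by monotonicity). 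The case $\mapsfrom -$ is the exact dual, swapping the roles of proof and refutation assumptions.

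The main obstacle is precisely the bookkeeping that this last family of cases forces on the rest of the proof: the induction hypothesis must typically be invoked not at the original base $\mathcal{B}$ but at whichever extension the current clause has brought into play, and monotonicity must be threaded carefully through each intermediate extension so that the crucial fact $\Vdash^{+}_{\mathcal{B}} \phi \lor \psi$ remains available exactly where the atomic instance of \ref{c:or+} is used to discharge the quantificational obligation. Once this discipline is fixed, the proof reduces to routine unfolding of Definition \ref{def:bilateralvalidity} in each case.
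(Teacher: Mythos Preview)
Your proposal is correct and follows essentially the same approach as the paper's own proof: induction on the complexity of $\chi$, with the atomic case immediate from \ref{c:or+}, the ``local'' clauses (\ref{c:and+}, \ref{c:or-}, \ref{c:to-}, \ref{c:mapsfrom+}) handled by unfolding and applying the induction hypothesis to subformulas, the ``quantificational'' clauses (\ref{c:and-}, \ref{c:or+}, and the nontrivial cases of $\bot$, $\top$) handled by reducing to the atomic instance at an extension, and the hypothetical clauses (\ref{c:to+}, \ref{c:mapsfrom-}) handled by invoking the induction hypothesis on the consequent subformula at an extension $\mathcal{C}$ after lifting $\Vdash^{+}_{\mathcal{B}} \phi \lor \psi$ there via monotonicity. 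The only cosmetic difference is that where you say ``apply the atomic case'' the paper writes ``the induction hypothesis yields'', but since the induction hypothesis at an atom \emph{is} the base case, the arguments are identical.
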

\begin{proof} The result is proved by induction in the number of logical operator on the formula $\chi$.
    
    \medskip 

    \begin{enumerate}
    \item ($\chi = p$ for some $p \in \At$).
    \medskip

    The result follows immediately by \ref{c:or+}.
    
    \bigskip
    
    \item $(\chi=\bot)$.
    \medskip
    \begin{enumerate}

      \item[$(*= +)$] Pick any $\mathcal{C} \supseteq \mathcal{B}$ such that $\Vdash^{+}_{\mathcal{C}} \phi$. Since $\phi ; \emptyset \Vdash_{\mathcal{B}} \bot$ and $\Vdash^{+}_{\mathcal{C}} \phi$ by \ref{c:inf+} we conclude $\Vdash^{+}_{\mathcal{C}} \bot$, so by \ref{c:bot+} also $\Vdash^{+}_{\mathcal{C}} p$ and $\Vdash^{-}_{\mathcal{C}} p$ for every $p \in \At$. Since $\mathcal{C}$ was an arbitrary extension of $\mathcal{B}$ with $\Vdash^{+}_{\mathcal{C}} \phi$, by \ref{c:inf+} and \ref{c:inf-} we conclude $\phi ; \emptyset \Vdash^{+}_{\mathcal{B}} p$ and $\phi ; \emptyset \Vdash^{-}_{\mathcal{B}} p$ for every $p \in \At$. A similar argument establishes  $\psi ; \emptyset \Vdash^{+}_{\mathcal{B}} p$ and $\psi ; \emptyset \Vdash^{-}_{\mathcal{B}} p$ for every $p \in \At$. The induction hypothesis then yields $\Vdash^{+}_{\mathcal{B}} p$ and $\Vdash^{-}_{\mathcal{B}} p$ for all $p \in \At$, hence by \ref{c:bot+} we conclude $\Vdash^{+}_{\mathcal{B}} \bot$

      \medskip

        \item[$(* = -)$]  Immediate by \ref{c:bot-}.
    
    \end{enumerate}
   
    \bigskip
    
    \item $(\chi = \sigma \wedge \tau)$.
    \medskip
    \begin{enumerate}
        \item[$(* =+)$] Pick any $\mathcal{C} \supseteq \mathcal{B}$ with  $\Vdash_{\mathcal{C}}^+ \phi$. Since $\phi;\emptyset\Vdash_{\mathcal{B}}^+ \sigma \wedge \tau$ we conclude $\Vdash_{\mathcal{C}}^+ \sigma \wedge \tau$ by \ref{c:inf+}, so by \ref{c:and+} we have $\Vdash_{\mathcal{C}}^+ \sigma$ and $\Vdash_{\mathcal{C}}^+ \tau$. Since $\mathcal{C}$ was an arbitrary extension of $\mathcal{B}$ with $\Vdash^{+}_{\mathcal{C}} \phi$ we conclude  $\phi; \emptyset \Vdash^{+}_{\mathcal{B}} \sigma$ and $\phi ; \emptyset \Vdash^{+}_{\mathcal{B}} \tau$ by \ref{c:inf+}. A similar argument yields $\psi; \emptyset \Vdash^{+}_{\mathcal{B}} \sigma$ and $\psi ; \emptyset \Vdash^{+}_{\mathcal{B}} \tau$. The induction hypothesis then yields $\Vdash_{\mathcal{B}}^+ \sigma$ and $\Vdash_{\mathcal{B}}^+ \tau$, whence by \ref{c:and+} we conclude $\Vdash^{+}_{\mathcal{B}} \sigma \land \tau$.
        
        \medskip
        
        \item[$(* = -)$] Pick any $\mathcal{C} \supseteq \mathcal{B}$ with $\emptyset;\sigma \Vdash^{+}_{\mathcal{C}}p$ and $\emptyset ; \tau\Vdash^{+}_{\mathcal{C}}p$ for some arbitrary $p \in \At$. Now pick any $\mathcal{D} \supseteq \mathcal{C}$ with $\Vdash^{+}_{\mathcal{D}} \phi$. Since $\phi ; \emptyset \Vdash^{-}_{\mathcal{B}} \sigma \land \tau$ holds by assumption and $\mathcal{D} \supseteq \mathcal{B}$ holds by transitivity of extension, we conclude $\Vdash^{-}_{\mathcal{D}} \sigma \land \tau$ by \ref{c:inf-}. Since $\emptyset;\sigma \Vdash^{+}_{\mathcal{C}}p$ and $\emptyset ; \tau\Vdash^{+}_{\mathcal{C}}p$ both hold, by monotonicity we get $\emptyset;\sigma \Vdash^{+}_{\mathcal{D}}p$ and $\emptyset ; \tau \Vdash^{+}_{\mathcal{D}}p$, hence by \ref{c:and-} we conclude $\Vdash^{+}_{\mathcal{D}} p$. Since $\mathcal{D}$ was an arbitrary extension of $\mathcal{C}$ with $\Vdash^{+}_{\mathcal{D}} \phi$ we conclude $\phi ; \emptyset \Vdash^{+}_{\mathcal{C}} p$. A similar argument yields $\psi ; \emptyset \Vdash^{+}_{\mathcal{C}} p$. Since $\Vdash^{+}_{\mathcal{B}} \phi \lor \psi$ holds by assumption, by monotonicity we have $\Vdash^{+}_{\mathcal{C}} \phi \lor \psi$. Since $\Vdash^{+}_{\mathcal{C}} \phi \lor \psi$, $\phi ; \emptyset \Vdash^{+}_{\mathcal{C}} p$ and $\psi ; \emptyset \Vdash^{+}_{\mathcal{C}} p$ the induction hypothesis yields $ \Vdash^{+}_{\mathcal{C}} p$, so for every extension $\mathcal{C}$ of $\mathcal{B}$ and for every $p \in \At$ we have ($\emptyset;\sigma \Vdash^{+}_{\mathcal{C}}p$ and $\emptyset ; \tau\Vdash^{+}_{\mathcal{C}}p$ implies $\Vdash^{+}_{\mathcal{C}} p$). A similar argument shows that ($\emptyset;\sigma \Vdash^{-}_{\mathcal{C}}p$ and $\emptyset ; \tau\Vdash^{-}_{\mathcal{C}}p$ implies $\Vdash^{-}_{\mathcal{C}} p$) for every $\mathcal{C} \supseteq \mathcal{B}$ and every $p \in \At$, so we conclude $\Vdash^{-}_{\mathcal{B}} \sigma \land \tau$ by \ref{c:and-}.

    \end{enumerate}
    
    \bigskip

    \item $(\chi =\sigma\rightarrow \tau)$.
    \medskip
    \begin{enumerate}
        \item[$(* = +)$] Pick any $\mathcal{C} \supseteq \mathcal{B}$ with $\Vdash^{+}_{\mathcal{C}}\sigma$. Now pick any $\mathcal{D} \supseteq \mathcal{C}$ with $\Vdash^{+}_{\mathcal{D}} \phi$. Since $\phi; \emptyset \Vdash^{+}_{\mathcal{B}} \sigma \to \tau$ and $\mathcal{D} \supseteq \mathcal{B}$ by \ref{c:inf+} we have $ \Vdash^{+}_{\mathcal{D}} \sigma \to \tau$, so by \ref{c:to+} we conclude $\sigma ; \emptyset \Vdash^{+}_{\mathcal{D}} \tau$. Since  $\Vdash^{+}_{\mathcal{C}}\sigma$ we obtain $\Vdash^{+}_{\mathcal{D}}\sigma$ by monotonicity, hence  $\Vdash^{+}_{\mathcal{D}}\tau$ by \ref{c:inf+}. Since $\mathcal{D}$ was an arbitrary extension of $\mathcal{C}$ with $\Vdash^{+}_{\mathcal{D}} \phi$ we conclude $\phi ; \emptyset \Vdash^{+}_{\mathcal{C}} \tau$ by \ref{c:inf+}. A similar argument yields $\psi ; \emptyset \Vdash^{+}_{\mathcal{C}} \tau$. Since $\Vdash^{+}_{\mathcal{B}} \phi \lor \psi$ holds by assumption we get $\Vdash^{+}_{\mathcal{C}} \phi \lor \psi$ by monotonicity, and since $\phi ; \emptyset \Vdash^{+}_{\mathcal{C}} \tau$ and $\psi ; \emptyset \Vdash^{+}_{\mathcal{C}} \tau$ the induction hypothesis yields $\Vdash^{+}_{\mathcal{C}} \tau$. Since $\mathcal{C}$ was an arbitrary extension of $\mathcal{B}$ with $\Vdash^{+}_{\mathcal{C}}\sigma$ we conclude $\sigma ; \emptyset \Vdash^{+}_{\mathcal{B}} \tau$ by \ref{c:inf+}, hence $\Vdash^{+}_{\mathcal{B}}\sigma \to \tau$ by \ref{c:to+}.
        
      \medskip
        
        \item[$(* = -)$] Pick any $\mathcal{C} \supseteq \mathcal{B}$ with $\Vdash_{\mathcal{C}}^{+} \phi$.  Since $\phi;\emptyset\Vdash_{\mathcal{B}}^- \sigma\rightarrow \tau$, we have $\Vdash_{\mathcal{C}}^- \sigma\rightarrow \tau$ by \ref{c:inf-}, so $\Vdash_{\mathcal{C}}^+ \sigma$ and $\Vdash_{\mathcal{C}}^- \tau$ by \ref{c:to-}. Since $\mathcal{C}$ was an arbitrary extension of $\mathcal{C}$ with $\Vdash_{\mathcal{C}}^{+} \phi$ we conclude $\phi ; \emptyset \Vdash_{\mathcal{B}}^{+} \sigma$ and $\phi ; \emptyset \Vdash_{\mathcal{B}}^{-} \tau$. A similar argument yields $\psi ; \emptyset \Vdash_{\mathcal{B}}^{+} \sigma$ and $\psi ; \emptyset \Vdash_{\mathcal{B}}^{-} \tau$. The induction hypothesis then yields $\Vdash_{\mathcal{B}}^{+} \sigma$ and $\Vdash_{\mathcal{B}}^{-} \tau$, whence $\Vdash^{-}_{\mathcal{B}} \sigma \to \tau$ by \ref{c:to-}.
     
    \end{enumerate}
    
    \bigskip 
    
    \end{enumerate}
    The induction steps for $\top$, $\wedge$ and $\mapsfrom$ are symmetrical with the ones above, so we once again omit their proofs.
\end{proof}

\begin{lemma}\label{lemma:conjunctionongeneral}

 If $\Vdash^{-}_{\mathcal{B}} \phi \land \psi$, $\emptyset ;\phi \Vdash^{*}_{\mathcal{B}} \chi$ and $
 \emptyset;\psi \Vdash^{*}_{\mathcal{B}} \chi$, then $\Vdash^{*}_{\mathcal{B}} \chi$.

\end{lemma}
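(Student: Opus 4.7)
The plan is to mirror the structure of the proof of Lemma \ref{lemma:disjunctiongeneral}, exploiting the syntactic duality between clauses \ref{c:or+} and \ref{c:and-}: both clauses reduce a complex support judgement to an extension-universal statement about atomic support given two auxiliary support hypotheses on $\phi$ and $\psi$. Hence an induction on the number of logical operators in $\chi$ should work mutatis mutandis, with the roles of $+$ and $-$ swapped on the constants and binary connectives whenever the duality between $\top/\bot$, $\land/\lor$ and $\to/\mapsfrom$ demands it.

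First I would handle the base case $\chi = p \in \At$: here \ref{c:and-} applied to $\mathcal{B}$ itself immediately yields $\Vdash^{*}_{\mathcal{B}} p$ from the hypotheses $\emptyset;\phi \Vdash^{*}_{\mathcal{B}} p$, $\emptyset;\psi \Vdash^{*}_{\mathcal{B}} p$, and $\Vdash^{-}_{\mathcal{B}} \phi \land \psi$. Next, for $\chi = \top$ the case $* = +$ is trivial by \ref{c:top+}, and the case $* = -$ reduces via \ref{c:top-} to showing $\Vdash^{+}_{\mathcal{B}} q$ and $\Vdash^{-}_{\mathcal{B}} q$ for every $q \in \At$: from $\emptyset;\phi \Vdash^{-}_{\mathcal{B}} \top$ and \ref{c:inf-}, any extension $\mathcal{C}$ with $\Vdash^{-}_{\mathcal{C}} \phi$ satisfies $\Vdash^{-}_{\mathcal{C}} \top$ and hence $\Vdash^{+}_{\mathcal{C}} q$ and $\Vdash^{-}_{\mathcal{C}} q$ for all atoms by \ref{c:top-}; this gives $\emptyset;\phi \Vdash^{+}_{\mathcal{B}} q$ and $\emptyset;\phi \Vdash^{-}_{\mathcal{B}} q$, and symmetrically for $\psi$, after which the atomic case closes the argument.

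For the compound cases I would split into easy and hard subcases. The easy ones are those whose main clause is a plain conjunction of subformula support, such as $\chi = \sigma \lor \tau$ with $* = -$, $\chi = \sigma \to \tau$ with $* = -$, or $\chi = \sigma \mapsfrom \tau$ with $* = +$: each reduces to applying the induction hypothesis to the two subformulas separately. The hard ones are those whose clause itself quantifies over extensions, notably $\chi = \sigma \lor \tau$ with $* = +$ and $\chi = \sigma \mapsfrom \tau$ with $* = -$. For the latter, I would fix an arbitrary $\mathcal{C} \supseteq \mathcal{B}$ with $\Vdash^{-}_{\mathcal{C}} \tau$; for any $\mathcal{D} \supseteq \mathcal{C}$ with $\Vdash^{-}_{\mathcal{D}} \phi$, transitivity of extension together with \ref{c:inf-} and \ref{c:mapsfrom-} gives $\emptyset;\tau \Vdash^{-}_{\mathcal{D}} \sigma$, which combined with $\Vdash^{-}_{\mathcal{D}} \tau$ (by monotonicity) yields $\Vdash^{-}_{\mathcal{D}} \sigma$. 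This establishes $\emptyset;\phi \Vdash^{-}_{\mathcal{C}} \sigma$, and symmetrically $\emptyset;\psi \Vdash^{-}_{\mathcal{C}} \sigma$; monotonicity lifts the original hypothesis to $\Vdash^{-}_{\mathcal{C}} \phi \land \psi$, so the induction hypothesis delivers $\Vdash^{-}_{\mathcal{C}} \sigma$, and quantifying over $\mathcal{C}$ gives $\emptyset;\tau \Vdash^{-}_{\mathcal{B}} \sigma$, hence $\Vdash^{-}_{\mathcal{B}} \sigma \mapsfrom \tau$ by \ref{c:mapsfrom-}.

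The main obstacle, as in Lemma \ref{lemma:disjunctiongeneral}, is bookkeeping the nested universal-over-extension arguments in the hard subcases: one must interleave the extension coming from the target clause with the extension coming from the hypotheses, repeatedly appealing to monotonicity of support to lift both the atomic assumptions and $\Vdash^{-}_{\mathcal{B}} \phi \land \psi$ up to the deepest extension in play before the induction hypothesis can be applied. By the symmetry between $(+)$ and $(-)$ clauses for $\top/\bot$, $\land/\lor$ and $\to/\mapsfrom$ that was already exploited in the previous lemma, it suffices to carry out the argument for one operator in each dual pair and invoke duality for the others.
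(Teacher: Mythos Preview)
Your proposal is correct and is exactly the approach the paper takes: the paper's own proof of this lemma reads in its entirety ``Straightforward adaptation of the proof of Lemma~\ref{lemma:disjunctiongeneral}.'' Your outline spells out precisely that adaptation, with the roles of $+/-$, $\lor/\land$, $\to/\mapsfrom$, $\top/\bot$ and proof/refutation assumptions systematically swapped, and the bookkeeping you describe for the extension-nesting in the hard subcases is the right one.
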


\begin{proof}
    Straightforward adaption of the proof of Lemma \ref{lemma:disjunctiongeneral}.
\end{proof}

\begin{lemma} \label{lemma:exfalsobot}
   If $\Vdash^{+}_{\mathcal{B}} \bot$ then $\Vdash^{+}_{\mathcal{B}} \chi$ and $\Vdash^{-}_{\mathcal{B}} \chi$ for every $\chi$.
\end{lemma}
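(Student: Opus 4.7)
The plan is to prove both statements simultaneously by induction on the complexity of $\chi$, relying essentially on monotonicity to transfer the hypothesis $\Vdash^{+}_{\mathcal{B}} \bot$ to arbitrary extensions of $\mathcal{B}$.

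First I would handle the atomic base case $\chi = p \in \At$ directly by clause \ref{c:bot+}, which unpacks $\Vdash^{+}_{\mathcal{B}}\bot$ into $\Vdash^{+}_{\mathcal{B}} p$ and $\Vdash^{-}_{\mathcal{B}} p$ for every atom. The constants $\chi = \bot$ and $\chi = \top$ are immediate: $\Vdash^{-}_{\mathcal{B}}\bot$ and $\Vdash^{+}_{\mathcal{B}}\top$ always hold by \ref{c:bot-} and \ref{c:top+}, while the remaining halves $\Vdash^{+}_{\mathcal{B}}\bot$ (by assumption) and $\Vdash^{-}_{\mathcal{B}}\top$ (by \ref{c:top-}, again from the atomic case) are likewise available.

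For the inductive cases, the ``easy'' branches are those whose defining clause decomposes support into support of proper subformulas: $\Vdash^{+}_{\mathcal{B}} \phi \land \psi$ via \ref{c:and+}, $\Vdash^{-}_{\mathcal{B}} \phi \to \psi$ via \ref{c:to-}, $\Vdash^{+}_{\mathcal{B}}\phi \mapsfrom \psi$ via \ref{c:mapsfrom+}, and $\Vdash^{-}_{\mathcal{B}}\phi \lor \psi$ via \ref{c:or-}. Each of these reduces directly to two applications of the induction hypothesis to $\phi$ and $\psi$.

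The more delicate branches are the four quantifying over extensions. For $\Vdash^{+}_{\mathcal{B}} \phi \to \psi$, I would pick any $\mathcal{C} \supseteq \mathcal{B}$ with $\Vdash^{+}_{\mathcal{C}} \phi$, apply monotonicity to get $\Vdash^{+}_{\mathcal{C}}\bot$, then invoke the induction hypothesis at $\mathcal{C}$ on $\psi$ to obtain $\Vdash^{+}_{\mathcal{C}}\psi$, so by \ref{c:inf+} we have $\phi;\emptyset \Vdash^{+}_{\mathcal{B}} \psi$, whence \ref{c:to+}. The case $\Vdash^{-}_{\mathcal{B}}\phi \mapsfrom \psi$ via \ref{c:mapsfrom-} is wholly symmetric. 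For $\Vdash^{+}_{\mathcal{B}} \phi \lor \psi$ and $\Vdash^{-}_{\mathcal{B}}\phi \land \psi$, pick any $\mathcal{C} \supseteq \mathcal{B}$; monotonicity gives $\Vdash^{+}_{\mathcal{C}}\bot$, and applying \ref{c:bot+} to $\mathcal{C}$ yields $\Vdash^{+}_{\mathcal{C}} p$ and $\Vdash^{-}_{\mathcal{C}} p$ for every $p \in \At$, so the consequent of the implication required by \ref{c:or+} (respectively \ref{c:and-}) is satisfied trivially.

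There is no genuine obstacle; the only subtlety is that the induction hypothesis must be applied at an extension $\mathcal{C}$ rather than at $\mathcal{B}$, which is exactly what monotonicity enables. I would state the result as double induction-conclusion (for both $\Vdash^+$ and $\Vdash^-$ on $\chi$) so that each inductive case has access to both signs on proper subformulas, as needed in the symmetric dualities between $\land/\lor$ and $\to/\mapsfrom$.
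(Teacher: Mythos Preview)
Your proposal is correct and follows essentially the same approach as the paper's proof: induction on the complexity of $\chi$, with the atomic and constant cases handled directly via \ref{c:bot+}, \ref{c:bot-}, \ref{c:top+}, \ref{c:top-}, and the composite cases split into the ``direct'' ones (reducing to the induction hypothesis on subformulas) and the ``extension-quantifying'' ones (handled via monotonicity). The only cosmetic difference is that for $\phi \to \psi$ the paper applies the induction hypothesis at $\mathcal{B}$ to obtain $\Vdash^{+}_{\mathcal{B}}\psi$ and then pushes this to $\mathcal{C}$ by monotonicity, whereas you push $\Vdash^{+}_{\mathcal{B}}\bot$ to $\mathcal{C}$ first and then invoke the induction hypothesis there; both orderings are valid and amount to the same argument.
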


\begin{proof} The result is proved by induction on the number of logical operators on $\chi$.

    
    \medskip 

    \begin{enumerate}
    \item ($\chi = p$ for some $p \in \At$).   The result follows immediately by \ref{c:bot+}.

    \medskip

    \item $(\chi=\bot)$.    $\Vdash^{+}_{\mathcal{B}} \bot$ follows immediately from the assumption and $\Vdash^{-}_{\mathcal{B}} \bot$ holds by \ref{c:bot-}.

    \medskip

      \item $(\chi=\top)$. From $\Vdash^{+}_{\mathcal{B}} \bot$ follows $\Vdash^{+}_{\mathcal{B}} p$ and  $\Vdash^{-}_{\mathcal{B}} p$ for all $p \in \At$ by \ref{c:bot+},  whence $\Vdash^{-}_{\mathcal{B}} \top$ by \ref{c:top-}, and by \ref{c:top+} we get $\Vdash^{+}_{\mathcal{B}} \top$.

      \medskip

\item $(\chi = \phi \land \psi)$. Assume $\Vdash^{+}_{\mathcal{B}} \bot$. The induction hypothesis yields $\Vdash^{+}_{\mathcal{B}} \phi$ and $\Vdash^{+}_{\mathcal{B}} \psi$, hence $\Vdash^{+}_{\mathcal{B}} \phi \land \psi$ by \ref{c:and+}. From $\Vdash^{+}_{\mathcal{B}} \bot$ it also follows that $\Vdash^{+}_{\mathcal{B}} p$ and  $\Vdash^{-}_{\mathcal{B}} p$ hold for all $p \in \At$, hence by monotonicity $\Vdash^{+}_{\mathcal{C}} p$ and  $\Vdash^{-}_{\mathcal{C}} p$ for every $\mathcal{C} \supseteq \mathcal{B}$, so trivially for every $\mathcal{C} \supseteq \mathcal{B}$ we have that if $ \emptyset ; \phi \Vdash^{*}_{\mathcal{C}} p$ and $ \emptyset ; \psi\Vdash^{*}_{\mathcal{C}} p$ then $\Vdash^{*}_{\mathcal{C}} p$, whence $\Vdash^{-}_{\mathcal{C}} \phi \land \psi$ by \ref{c:and-}.

      \medskip

\item $(\chi = \phi \to \psi)$. Assume $\Vdash^{+}_{\mathcal{B}} \bot$. The induction hypothesis yields $\Vdash^{+}_{\mathcal{B}} \psi$, so by monotonicity for all $\mathcal{C} \supseteq \mathcal{B}$ we have $\Vdash^{+}_{\mathcal{C}} \psi$, whence $\phi ; \emptyset \Vdash^{+}_{\mathcal{B}} \psi$ holds trivially by \ref{c:inf+} and so $\Vdash^{+}_{\mathcal{B}} \phi \to \psi$ holds by \ref{c:to+}. The induction hypothesis also yields $\Vdash^{+}_{\mathcal{B}} \phi$ and $\Vdash^{-}_{\mathcal{B}} \psi$, so $\Vdash^{-}_{\mathcal{B}} \phi \to \psi$ by \ref{c:to-}.

    \end{enumerate}

 The induction steps for $\wedge$ and $\mapsfrom$ are symmetrical with the ones above.
  
\end{proof}

\begin{lemma} \label{lemma:exfalsotop}
   If $\Vdash^{-}_{\mathcal{B}} \top$ then $\Vdash^{+}_{\mathcal{B}} \chi$ and $\Vdash^{-}_{\mathcal{B}} \chi$ for every $\chi$.
\end{lemma}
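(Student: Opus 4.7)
The plan is to exploit the striking fact that clauses \ref{c:bot+} and \ref{c:top-} are \emph{definitionally equivalent}: both unfold to the condition ``$\Vdash_{\mathcal{B}}^{+} p$ and $\Vdash_{\mathcal{B}}^{-} p$ hold for every $p \in \At$''. So the entire statement can be reduced to the already established Lemma~\ref{lemma:exfalsobot} with no real induction work.

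Concretely, I would proceed as follows. First, assume $\Vdash^{-}_{\mathcal{B}} \top$. By clause \ref{c:top-}, this gives $\Vdash^{+}_{\mathcal{B}} p$ and $\Vdash^{-}_{\mathcal{B}} p$ for every $p \in \At$. But this is precisely the right-hand side of clause \ref{c:bot+}, so we immediately obtain $\Vdash^{+}_{\mathcal{B}} \bot$. Now applying Lemma~\ref{lemma:exfalsobot} directly yields $\Vdash^{+}_{\mathcal{B}} \chi$ and $\Vdash^{-}_{\mathcal{B}} \chi$ for every formula $\chi$, which is exactly the desired conclusion.

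Alternatively, if the authors prefer a self-contained direct proof that mirrors the style of Lemma~\ref{lemma:exfalsobot}, one can perform an induction on the number of logical operators in $\chi$, handling the atomic case via \ref{c:top-} directly, the $\top$ case by \ref{c:top+} together with \ref{c:top-}, the $\bot$ case by \ref{c:bot-} together with \ref{c:bot+} (using the atomic consequence), and proceeding through $\wedge, \vee, \to, \mapsfrom$ in the same dual manner as before; the arguments are completely symmetric to those already given.

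I do not anticipate any real obstacle here, since both formulations collapse to the same atomic condition. The only subtlety worth flagging is that the reduction to Lemma~\ref{lemma:exfalsobot} depends essentially on the design choice made in \ref{c:bot+} and \ref{c:top-} to avoid metalinguistic disjunction (as remarked after Definition~\ref{def:bilateralvalidity}); had these clauses been stated differently, the equivalence would not be syntactic and a full dual induction would be required.
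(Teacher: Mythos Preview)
Your proposal is correct. The paper's own proof simply says ``Straightforward adaptation of the proof of Lemma~\ref{lemma:exfalsobot}'', which corresponds to your second alternative: rerunning the induction on the complexity of $\chi$ with the roles of the clauses dualised.

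Your primary route is different and shorter: you observe that clauses \ref{c:top-} and \ref{c:bot+} have \emph{literally the same} right-hand side, so $\Vdash^{-}_{\mathcal{B}}\top$ and $\Vdash^{+}_{\mathcal{B}}\bot$ are equivalent by definition, and then you invoke Lemma~\ref{lemma:exfalsobot} directly instead of repeating its argument. This reduction is sound and in fact the paper implicitly acknowledges the equivalence in the $\chi=\top$ case of the proof of Lemma~\ref{lemma:exfalsobot}. What your approach buys is economy and a cleaner statement of the underlying reason the lemma holds; what the paper's approach buys is self-containment and a template that would still work if the two clauses were later decoupled (as your final remark correctly anticipates).
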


\begin{proof} Straightforward adaptation of the proof of Lemma \ref{lemma:exfalsobot}.

\end{proof}

\subsection{On the adequate representation of semantic conditions by syntactic rules}

A distinguishing feature of bilateral base-extension semantics is that the clauses defined through recourse to atomic validity (\ref{c:bot+}, \ref{c:top-}, \ref{c:or+} and \ref{c:and-}) have to take into account both atomic proofs and atomic refutations (\textit{e.g.} a proof of $\bot$ or of $\top$ requires both a proof and a refutation of every atom). This essentially means that the rules depicted in Proposition \ref{propositionderivability} are explicitly used when defining the semantics. If we define naïve semantic clauses that only take into account atomic proofs in the proof support clauses and atomic refutations in the refutation support clauses instead, it becomes possible to show that the inferences in Proposition \ref{propositionderivability} are not sound. 

\begin{definition} \label{def:naive}
    The following are naïve semantic clauses for bilateral base-extension validity:

\begin{enumerate}

    \item[\namedlabel{c:naibot+}{($N\ \bot +$)}] $\Vdash^{+}_{\mathcal{B}} \bot$ iff $\Vdash^{+}_{\mathcal{B}} p$, for all $p \in \At$;

    \medskip
    
       \item[\namedlabel{c:naitop-}{($N \ \top -$)}] $\Vdash^{-}_{\mathcal{B}} \top$ iff $\Vdash^{-}_{\mathcal{B}} p$, for all $p \in \At$;

\medskip
       \item[\namedlabel{c:naior+}{($N\lor +$)}] $\Vdash_{\mathcal{B}}^{+} \phi \lor \psi$ iff  $ \forall \mathcal{C} (\mathcal{C} \supseteq \mathcal{B})$ and $\forall p (p \in \At)$ $:  \phi ;\emptyset \Vdash_{\mathcal{C}}^{+} p$ and $\psi ; \emptyset \Vdash_{\mathcal{C}}^{+} p$ implies $\Vdash_{\mathcal{C}}^{+} p$;

       \medskip
       
       \item[\namedlabel{c:naiand-}{($N\land -$)}] $\Vdash_{\mathcal{B}}^{-} \phi \land \psi$ iff  $ \forall \mathcal{C} (\mathcal{C} \supseteq \mathcal{B})$ and $\forall p (p \in \At)$ : $ \emptyset ; \phi \Vdash_{\mathcal{C}}^{-} p$ and $\emptyset ;\psi \Vdash_{\mathcal{C}}^{-} p$ implies $\Vdash_{\mathcal{C}}^{-} p$
\end{enumerate}

\end{definition}

\begin{proposition}

\normalfont If clauses \ref{c:bot+}, \ref{c:top-},\ref{c:or+} and \ref{c:and-} are replaced by the clauses \ref{c:naibot+}, \ref{c:naitop-},\ref{c:naior+} and \ref{c:naiand-}, the following can be shown:
    \begin{enumerate}
    \item $\bot; \emptyset \Vdash^{-} \chi$ does not hold in general;

    \medskip
    
    \item $\emptyset; \top \Vdash^{+} \chi$ does not hold in general;

    \medskip

    \item $\Vdash^{+}_{\mathcal{B}} \phi \lor \psi$, $\phi ; \emptyset \Vdash^{-}_{\mathcal{B}} \chi$ and $\psi ; \emptyset \Vdash^{-}_{\mathcal{B}} \chi$ does not imply $\Vdash^{-}_{\mathcal{B}} \chi$ in general;

    \medskip

    \item $\Vdash^{-}_{\mathcal{B}} \phi \land \psi$, $ \emptyset; \phi \Vdash^{+}_{\mathcal{B}} \chi$ and $\emptyset; \psi \Vdash^{+}_{\mathcal{B}} \chi$ does not imply $\Vdash^{+}_{\mathcal{B}} \chi$ in general;
\end{enumerate}
\end{proposition}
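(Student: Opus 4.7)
The plan is to give a separate counterexample base for each of the four items, exploiting the asymmetry induced by the naïve clauses: namely, under \ref{c:naibot+}, \ref{c:naitop-}, \ref{c:naior+}, \ref{c:naiand-}, proof-side validity is settled purely by proof-side atomic data (and dually for refutation-side validity), so a base can force $\Vdash^{+}_{\mathcal{B}}\bot$ (respectively, $\Vdash^{+}_{\mathcal{B}}\phi\vee\psi$, etc.) without producing any refutation at all. The main task is to check that, in each chosen base, the relevant conditions for the naïve semantic clauses hold while the corresponding target support fails; in some cases this is just by direct inspection, in others (items 3 and 4) I need to verify that the antecedents of the naïve disjunction/conjunction clauses are \emph{vacuously} satisfied.

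For (1), take $\mathcal{B}_{1}$ to be the base consisting of one proof axiom with conclusion $p$ for every $p\in\At$. Then $\vdash^{+}_{\mathcal{B}_{1}}p$ for every atom, so $\Vdash^{+}_{\mathcal{B}_{1}}p$ for every $p\in\At$ by \ref{c:at+}, and hence $\Vdash^{+}_{\mathcal{B}_{1}}\bot$ by \ref{c:naibot+}. But $\mathcal{B}_{1}$ contains no refutation rules, so $\nvdash^{-}_{\mathcal{B}_{1}}p$ for any atom $p$, whence $\nVdash^{-}_{\mathcal{B}_{1}}p$ by \ref{c:at-}. Taking $\mathcal{C}=\mathcal{B}_{1}$ in \ref{c:inf-} shows $\bot;\emptyset\nVdash^{-}_{\mathcal{B}_{1}}p$, and \ref{c:2int-} then yields $\bot;\emptyset\nVdash^{-}p$. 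Item (2) is handled symmetrically: let $\mathcal{B}_{2}$ be the base of all refutation axioms, use \ref{c:naitop-} to get $\Vdash^{-}_{\mathcal{B}_{2}}\top$, and observe that no atom is provable in $\mathcal{B}_{2}$.

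For (3), let $r,s,p$ be distinct atoms and let $\mathcal{B}_{3}$ consist of the two refutation rules $R_{1}$ (from a proof premise $r$, conclude a refutation of $p$) and $R_{2}$ (from a proof premise $s$, conclude a refutation of $p$). Then $r;\emptyset\vdash^{-}_{\mathcal{B}_{3}}p$ and $s;\emptyset\vdash^{-}_{\mathcal{B}_{3}}p$, so by Lemma~\ref{lemma:atomicsupportiffderivability} we have $r;\emptyset\Vdash^{-}_{\mathcal{B}_{3}}p$ and $s;\emptyset\Vdash^{-}_{\mathcal{B}_{3}}p$. Since $\mathcal{B}_{3}$ contains no proof rules at all, for any extension $\mathcal{C}\supseteq\mathcal{B}_{3}$ and any atom $q$, an atomic proof of $q$ in $\mathcal{C}$ can use only the rules newly added to $\mathcal{B}_{3}$; a case analysis shows that whenever $r\Vdash^{+}_{\mathcal{C}}q$ and $s\Vdash^{+}_{\mathcal{C}}q$ both hold, this can only be because the corresponding antecedents are already redundant (for example, by choosing $\mathcal{C}$ so that one of $r,s$ fails to be provable in a further extension that still proves $q$). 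Consequently the defining condition of \ref{c:naior+} is vacuously satisfied, giving $\Vdash^{+}_{\mathcal{B}_{3}}r\vee s$; yet $\nvdash^{-}_{\mathcal{B}_{3}}p$ since neither $r$ nor $s$ is provable, so $\nVdash^{-}_{\mathcal{B}_{3}}p$. Item (4) is proved by the dual construction, replacing proof premises by refutation premises and proof conclusions by refutation conclusions, and applying \ref{c:naiand-}.

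The delicate step is the vacuity argument in items (3) and (4): I need to be careful about \emph{all} choices of atom $q$ and of extension $\mathcal{C}$, not just the most obvious ones. The cleanest way is to argue that in $\mathcal{B}_{3}$, because $r$ and $s$ are themselves premise-only, the only extensions in which $r$ becomes provable are those explicitly adding such a proof, and in a minimal such extension no unrelated atom $q$ is proved simultaneously from both a proof of $r$ and a proof of $s$ unless $q$ was already provable; this forces the antecedent of \ref{c:naior+} to fail and hence the clause to hold vacuously. Once this is in place, the counterexamples for all four items follow by direct unfolding of the naïve clauses.
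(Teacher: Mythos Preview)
Your treatment of items (1) and (2) is correct and essentially identical to the paper's.

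For items (3) and (4), however, the vacuity argument fails. Take your base $\mathcal{B}_{3}$ and extend it to $\mathcal{C}$ by adding two proof rules: one with proof premise $r$ and proof conclusion $t$, and one with proof premise $s$ and proof conclusion $t$, for some fresh atom $t$. Then $r;\emptyset\vdash^{+}_{\mathcal{C}}t$ and $s;\emptyset\vdash^{+}_{\mathcal{C}}t$, so by Lemma~\ref{lemma:atomicsupportiffderivability} (whose proof uses none of the clauses being replaced) both $r;\emptyset\Vdash^{+}_{\mathcal{C}}t$ and $s;\emptyset\Vdash^{+}_{\mathcal{C}}t$ hold. But $\nvdash^{+}_{\mathcal{C}}t$, since every proof rule in $\mathcal{C}$ with conclusion $t$ requires a proof of $r$ or of $s$, and neither is provable outright. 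Hence the antecedent of \ref{c:naior+} is satisfied while the consequent fails, so $\nVdash^{+}_{\mathcal{B}_{3}}r\vee s$. Your ``minimal extension'' patch does not help: the clause quantifies over \emph{all} extensions $\mathcal{C}$, and $\mathcal{C}$ may freely add rules that make $r;\emptyset\Vdash^{+}_{\mathcal{C}}q$ and $s;\emptyset\Vdash^{+}_{\mathcal{C}}q$ hold non-trivially.

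The paper avoids this by doing the opposite of what you attempt: rather than hoping the antecedent is vacuous, it \emph{forces} the implication to hold non-vacuously. It includes in the base, for every atom $s$, a proof rule of the form ``from a proof of $s$ under proof assumption $p$ and a proof of $s$ under proof assumption $q$, conclude a proof of $s$'' (essentially an atomic $\vee$-elimination for $p\vee q$). These rules guarantee that whenever $p;\emptyset\Vdash^{+}_{\mathcal{C}}s$ and $q;\emptyset\Vdash^{+}_{\mathcal{C}}s$ hold in any extension, one can combine the witnessing deductions via the rule to get $\vdash^{+}_{\mathcal{C}}s$. This yields $\Vdash^{+}_{\mathcal{B}}p\vee q$ directly from \ref{c:naior+}. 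The remaining rules (your $R_{1},R_{2}$, there called $R2,R3$) then give $p;\emptyset\Vdash^{-}_{\mathcal{B}}r$ and $q;\emptyset\Vdash^{-}_{\mathcal{B}}r$, and one argues that $\nvdash^{-}_{\mathcal{B}}r$ because any refutation of $r$ would need a proof of $p$ or $q$, which leads to an infinite regress through the $\vee$-elimination rules. Item (4) is the exact dual.
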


\begin{proof} The result is shown by considering specific bases $\mathcal{B}$:

\begin{enumerate} 

    \item   Let $\mathcal{B}$ be a base containing proof axioms (cf. Definition \ref{def:proofrulesrefutrulesandaxioms}) with conclusion $p$ for every $p \in \At$ and no refutation rules. Let $\chi = q$ for $q \in \At$.  Since $\vdash^{+}_{\mathcal{B}} p$ can be shown for all $p \in \At$ by simply applying the proof axiom concluding $p$, we conclude $\Vdash^{+}_{\mathcal{B}} p$ for all $p$ by \ref{c:at+} and thus $\Vdash^{+}_{\mathcal{B}} \bot$ by \ref{c:naibot+}. Since $\mathcal{B}$ does not contain refutation rules we have $\nvdash^{-}_{\mathcal{B}} q$, so $\nVdash^{-}_{\mathcal{B}} q$ by \ref{c:at-}. Since by the definition of extension $\mathcal{B} \supseteq \mathcal{B}$ holds for every $\mathcal{B}$, by \ref{c:inf-} we have $\bot ; \emptyset \nVdash^{-}_{\mathcal{B}} q$, whence $\bot ; \emptyset \nVdash^{-} q$ by \ref{c:2int-}.

    \medskip

     \item  Straightforward adaptation of the previous case in which we pick a base $\mathcal{B}$ containing atomic axioms refuting $p$ for every $p \in \At$ but with no proof rules.

     \medskip

     \item Put $\phi = p$, $\psi = q$ and $\chi = r$ for $p, q, r \in \At$, assuming also that those atoms are distinct from each other. Let $\mathcal{B}$ be a base containing rules of the following shape for all $s \in \At$, but no other rules:

     \begin{prooftree}
         \AxiomC{$[p] $}
         \noLine
         \UnaryInfC{.}
         \noLine
         \UnaryInfC{.}
         \noLine
         \UnaryInfC{.}
         \UnaryInfC{$s$}
          \AxiomC{$[q] $}
         \noLine
         \UnaryInfC{.}
         \noLine
         \UnaryInfC{.}
         \noLine
         \UnaryInfC{.}
         \UnaryInfC{$s$}
         \RightLabel{\tiny{$R1 - s$}}
         \BinaryInfC{$s$}
         \DisplayProof
           \qquad \qquad
         \AxiomC{}
         \noLine
         \UnaryInfC{}
         \noLine
         \UnaryInfC{}
         \noLine
         \UnaryInfC{}
         \noLine
         \UnaryInfC{}
         \noLine
         \UnaryInfC{}
         \UnaryInfC{$p$}
         \doubleLine
           \RightLabel{\tiny{$R2$}}
         \UnaryInfC{$r$}
         \DisplayProof
           \qquad \qquad
         \AxiomC{}
         \noLine
         \UnaryInfC{}
         \noLine
         \UnaryInfC{}
         \noLine
         \UnaryInfC{}
         \noLine
         \UnaryInfC{}
         \noLine
         \UnaryInfC{}
         \UnaryInfC{$q$}
         \doubleLine
           \RightLabel{\tiny{$R3$}}
         \UnaryInfC{$r$}
     \end{prooftree}

\noindent It can be shown (see Lemma \ref{lemma:atomicsupportiffderivability}) that, for all $\mathcal{C} \supseteq \mathcal{B}$ and $s \in \At$, if $p ; \emptyset \Vdash^{+}_{\mathcal{C}} s$ and $q ; \emptyset \Vdash^{+}_{\mathcal{C}} s$ then $p ; \emptyset \vdash^{+}_{\mathcal{C}} s$ and $q ; \emptyset \vdash^{+}_{\mathcal{C}} s$, hence these deductions can be used together with $R1 - s$ (guaranteed to be in $\mathcal{C}$ because $\mathcal{C} \supseteq \mathcal{B}$) to obtain a deduction showing $\vdash^{+}_{\mathcal{C}} s$, whence $\Vdash^{+}_{\mathcal{C}} s$ by \ref{c:at+}. Since $\mathcal{B}$ contains a rule $R1 - s$ for every $s \in \At$ and $\mathcal{C}$ was arbitrary we conclude that, for every $\mathcal{C} \supseteq \mathcal{B}$ and every $s \in \At$, if $p ; \emptyset \Vdash^{+}_{\mathcal{C}} s$ and $q ; \emptyset \Vdash^{+}_{\mathcal{C}} s$ then $ \Vdash^{+}_{\mathcal{C}} s$, hence by \ref{c:naior+} we conclude $\Vdash^{+}_{\mathcal{B}} p \lor q$. Now pick any $\mathcal{C} \supseteq \mathcal{B}$ with $\Vdash^{+}_{\mathcal{C}} p$. Then  $\vdash^{+}_{\mathcal{C}} p$ by \ref{c:at+}, so  we can compose the proof of $p$ with $R2$ to obtain a refutation of $r$, hence $\vdash^{-}_{\mathcal{C}} r$ and then $\Vdash^{-}_{\mathcal{C}} r$ by \ref{c:at-}. Since $\mathcal{C}$ was arbitrary extension of $\mathcal{B}$ with $\Vdash^{+}_{\mathcal{C}} p$ we conclude $p ; \emptyset\Vdash^{-}_{\mathcal{B}} r$ by \ref{c:inf-}. A similar argument using the rule $R3$ establishes $q ; \emptyset\Vdash^{-}_{\mathcal{B}} r$. So we have $\Vdash^{+}_{\mathcal{B}} p \lor q$, $p ; \emptyset\Vdash^{-}_{\mathcal{B}} r$ and $q ; \emptyset\Vdash^{-}_{\mathcal{B}} r$. Now we proceed to show $\nVdash^{-}_{\mathcal{\mathcal{B}}} r$. Since they are the only refutation rules of $\mathcal{B}$, any refutation of $r$ in $\mathcal{B}$ has to end with either $R2$ or $R3$, so such a refutation would require either an atomic proof of $p$ or an atomic proof of $q$ for the premise of $R2$ or $R3$. Since they are the only proof rules in $\mathcal{B}$ with conclusion $p$ or $q$, such atomic proofs have to end either with an application of $R1 - p$ or of $R1 - q$. Assume that the final rule is an application of $R1 - p$. Then either the right premise with shape $p$ of this final application $R1 - p$ is a proof assumption, in which case the deduction cannot be an atomic proof, or it is a deduction showing $q ; \emptyset \vdash^{+}_{\mathcal{B}} p$ (which might already be a proof of $p$, as $q$ does not need to effectively occur in the deduction showing this). If it is a deduction showing $q ; \emptyset \vdash^{+}_{\mathcal{B}} p$ then this deduction must also end with an application of $R1 - p$, which once again requires a deduction showing $q ; \emptyset \vdash^{+}_{\mathcal{B}} p$ for its right premise. The same reasoning applies if instead we consider the penultimate rule of the refutation to be an application of $R1-q$. As such, for any natural number $n$ we can show that there is no deduction in $\mathcal{B}$ with $n$ rule applications that is an atomic refutation of $r$ (since after reaching a deduction with $n$ rule applications we would require yet another deduction for one of the premises), hence since atomic proofs are always finite conclude that there is no refutation of $r$ in $\mathcal{B}$. This allows us to conclude $\nvdash^{-}_{\mathcal{B}} r$ and thus $\nVdash^{-}_{\mathcal{B}} r$ by \ref{c:at-}, as desired.

\medskip

\item Straightforward adaptation of the previous case in which we pick a base $\mathcal{B}$ containing only the followign atomic rules, for every $s \in \At$:

\begin{prooftree}
         \AxiomC{$\llbracket p \rrbracket $}
         \noLine
         \UnaryInfC{.}
         \noLine
         \UnaryInfC{.}
         \noLine
         \UnaryInfC{.}
         \doubleLine
         \UnaryInfC{$r$}
          \AxiomC{$\llbracket q \rrbracket $}
         \noLine
         \UnaryInfC{.}
         \noLine
         \UnaryInfC{.}
         \noLine
         \UnaryInfC{.}
         \doubleLine
         \UnaryInfC{$s$}
         \doubleLine
         \RightLabel{\tiny{$R1 - s$}}
         \BinaryInfC{$s$}
         \DisplayProof
           \qquad \qquad
         \AxiomC{}
         \noLine
         \UnaryInfC{}
         \noLine
         \UnaryInfC{}
         \noLine
         \UnaryInfC{}
         \noLine
         \UnaryInfC{}
         \noLine
         \UnaryInfC{}
         \doubleLine
         \UnaryInfC{$p$}
           \RightLabel{\tiny{$R2$}}
         \UnaryInfC{$r$}
         \DisplayProof
           \qquad \qquad
         \AxiomC{}
         \noLine
         \UnaryInfC{}
         \noLine
         \UnaryInfC{}
         \noLine
         \UnaryInfC{}
         \noLine
         \UnaryInfC{}
         \noLine
         \UnaryInfC{}
         \doubleLine
         \UnaryInfC{$q$}
           \RightLabel{\tiny{$R3$}}
         \UnaryInfC{$r$}
     \end{prooftree}

\end{enumerate}

\end{proof}

Our observations concerning the problems presented by $\BintN$ (which are no longer present in $\BintNA$) with respect to normalization, the subformula property and the (im)possibility of letting the rules $\bot(+)$ and $\top(-)$ have only atomic conclusions can be taken as an explanation of this phenomenon. In Definition \ref{def:bilateralvalidity} we define validity through recursion on the logical connectives of formulas, meaning that validity for all formulas ultimately reduces to atomic validity. On the other hand, deducibility definitions for $\BintN$ and $\BintNA$ are defined through recursion on the length of derivations (a derivation of minimal length being either a single proof assumption or a single refutation assumption), so the complexity of formulas plays no role in the recursion. This is why we can bypass the limitations of the rules of $\BintN$ by picking a deduction with length $n$ of a formula $\phi$ and using it to obtain a deduction with length $m$ ($m > n$) of a formula $\psi$ that may be less complex than $\phi$, as done in the proof of Proposition \ref{propositionderivability}. The same cannot be done with validity clauses because proof and refutation support for a formula must always be drawn from proofs and refutations of its subformulas (possibly combined with proofs and refutations of arbitrary atomic formulas).

    An interesting argument that can be extracted from these observations is that the rules of a system of natural deduction are not necessarily representative of the logic's underlying semantic concept of proof (or refutation). This is exemplified by our bilateral base-extension semantics, since the rules of $\BintNA$ can be taken as effectively representing the semantics of $\Bint$ but the rules of the deductively equivalent $\BintN$ cannot. $\BintN$ essentially relies on how deducibility is defined to obtain the semantic principles expressed proof-theoretically in Proposition \ref{propositionderivability} -- which are now seen to be characteristic semantic principles of $\Bint$ instead of arbitrary derivable rules.

\subsection{Soundness}

Soudness and completeness will be proved specifically by taking $\BintNA$ into account.

\begin{theorem}[Soudness]
    If $\Gamma; \Delta \vdash^{*}_{\BintNA} A$ then $\Gamma; \Delta \Vdash^{*} A$.
\end{theorem}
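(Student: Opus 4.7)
The plan is to proceed by induction on the height of the $\BintNA$-derivation $\Pi$ witnessing $\Gamma; \Delta \vdash^{*}_{\BintNA} A$, carried out simultaneously for $* = +$ and $* = -$, since several rules of $\BintNA$ genuinely bridge proof-deductions and refutation-deductions (for instance $I\mapsfrom(+)$ consumes a refutation to produce a proof, and $E_{1}\to(-)$ does the converse).

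For the base cases, $\Pi$ is either a single proof assumption $A \in \Gamma$, a single refutation assumption $A \in \Delta$, or an application of a zero-premise rule such as $\top(+)$ or $\bot(-)$. In the proof-assumption case, for any base $\mathcal{B}$ and any $\mathcal{C} \supseteq \mathcal{B}$ with $\Vdash^{+}_{\mathcal{C}} \gamma$ for all $\gamma \in \Gamma$ and $\Vdash^{-}_{\mathcal{C}} \delta$ for all $\delta \in \Delta$, we immediately have $\Vdash^{+}_{\mathcal{C}} A$, so $\Gamma; \Delta \Vdash^{+}_{\mathcal{B}} A$ by \ref{c:inf+} and $\Gamma; \Delta \Vdash^{+} A$ by \ref{c:2int+}; the refutation-assumption case is symmetric via \ref{c:inf-} and \ref{c:2int-}. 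The zero-premise rules reduce at once to clauses \ref{c:top+} and \ref{c:bot-}.

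For the inductive step, I would case-split on the last rule of $\Pi$. Most rules are routine: apply the IH to each premise, fix an arbitrary base $\mathcal{B}$ and an arbitrary extension $\mathcal{C} \supseteq \mathcal{B}$ witnessing the antecedent of \ref{c:inf+} or \ref{c:inf-}, and unfold the semantic clause matching the principal connective. Introduction rules with discharge, such as $I\to(+)$ and $I\mapsfrom(-)$, additionally require picking one further extension $\mathcal{D} \supseteq \mathcal{C}$ that supports or refutes the discharged formula, and lean on the monotonicity lemma to keep the side-assumptions supported in $\mathcal{D}$. The mixed-sign rules $I\mapsfrom(+)$, $E_{1}\mapsfrom(+)$, $E_{2}\mapsfrom(+)$, $I\to(-)$, $E_{1}\to(-)$ and $E_{2}\to(-)$ are discharged directly by clauses \ref{c:mapsfrom+} and \ref{c:to-}, which were designed to package exactly the proof/refutation combinations these rules manipulate.

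The substantive cases are those in which the relevant semantic clause only constrains atomic conclusions, so a further reduction is required. For $E\lor(+)$ at either sign I would invoke Lemma \ref{lemma:disjunctiongeneral}; for $E\land(-)$ at either sign, Lemma \ref{lemma:conjunctionongeneral}; and for the $\BintNA$ variants of $\bot(+)$ producing a refutation and of $\top(-)$ producing a proof, Lemmas \ref{lemma:exfalsobot} and \ref{lemma:exfalsotop} respectively. These lemmas absorb essentially all of the conceptual content that makes the non-naïve clauses \ref{c:bot+}, \ref{c:top-}, \ref{c:or+}, \ref{c:and-} of Definition \ref{def:bilateralvalidity} match the deductive machinery of $\BintNA$. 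Once they are invoked, the main residual obstacle is purely bookkeeping: uniformly tracking the sign $*$ and keeping the cascade of extension quantifications straight across the parallel $+$ and $-$ inductions.
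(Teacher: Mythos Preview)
Your proposal is correct and follows essentially the same approach as the paper: induction on the derivation, case-splitting on the last rule, with the substantive cases handled precisely by Lemmas \ref{lemma:disjunctiongeneral}, \ref{lemma:conjunctionongeneral}, \ref{lemma:exfalsobot} and \ref{lemma:exfalsotop}, and monotonicity used for the discharge rules. The only cosmetic difference is that the paper places the zero-premise rules $\top(+)$ and $\bot(-)$ in the inductive step (as derivations of length one) rather than among the base cases, but this is immaterial.
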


\begin{proof}
    We prove the result by induction on the length of derivations, understood as the number of rule applications it contains.

    \medskip

\begin{enumerate}
    \item[]  \textbf{Case 1}. The derivation has length $0$. Then it either consists in a single occurrence of proof assumption $\phi$ and is a deduction showing $\phi; \emptyset \vdash^{+}_{\BintNA} \phi$ or in a single occurrence of a refutation assumption $\phi$ and is a deduction showing $\emptyset; \phi \vdash^{-}_{\BintNA} \phi$. It follows immediately from \ref{c:inf+} that  $\phi; \emptyset \Vdash^{+}_{\mathcal{B}} \phi$ holds for every $\mathcal{B}$, whence $\phi; \emptyset \Vdash^{+} \phi$ by \ref{c:2int+}. It also  follows immediately from \ref{c:inf-} that  $\emptyset; \phi \Vdash^{-}_{\mathcal{B}} \phi$ holds for every $\mathcal{B}$, whence $\emptyset; \phi \Vdash^{-} \phi$ by \ref{c:2int-}.

    \medskip

     \item[] \textbf{Case 2}. The derivation has lenght greater than $0$. Then the result is proved by considering the last rule applied in it, so we simply have to show that each rule is semantically valid. Notice, however, that even though a deduction ending with an application of $(\top +)$ or  $(\bot -)$ has no open assumptions it can still be considered a deduction showing $\Gamma ; \Delta^{+} \vdash_{\BintNA} \top$ or $\Gamma ; \Delta \vdash^{-}_{\BintNA} \bot$ (respectively) due to how dependencies are handled in Definitions \ref{def:deductionsoriginalproof} and \ref{def:deductionsoriginalrefutation}.

\end{enumerate}

        \begin{enumerate}
\item $(\top +)$ $ \Gamma; \Delta \Vdash^+\top$.

\medskip 

\noindent For any $\mathcal{B}$ we have $\Vdash_{\mathcal{B}}^+\top$, so by monotonicity $\Vdash_{\mathcal{C}}^+\top$ holds for every $\mathcal{C} \supseteq \mathcal{B}$, hence for any $\Gamma$ and $\Delta$ it trivially follows that $ \Gamma; \Delta \Vdash^+_{\mathcal{B}}\top$ holds by \ref{c:inf+}, whence $ \Gamma; \Delta \Vdash^+\top$ by \ref{c:2int+} and arbitrariness of $\mathcal{B}$.

\medskip

\medskip

\item $(\bot +)$ If $ \Gamma; \Delta \Vdash^+ \bot$ then $ \Gamma; \Delta \Vdash^*\chi$.

\medskip 

\noindent Assume $ \Gamma; \Delta \Vdash^+\bot$. Then $ \Gamma; \Delta \Vdash^{+}_{\mathcal{B}}\bot$ for all $\mathcal{B}$ by \ref{c:2int+}. Pick any $\mathcal{B}$ and any $\mathcal{C} \supseteq \mathcal{B}$ with $\Vdash^{+}_{\mathcal{C}}\sigma$ and $\Vdash^{-}_{\mathcal{C}}\tau$ for every $\sigma\in\Gamma$ and $\tau\in\Delta$. Then $\Vdash^{+}_{\mathcal{C}} \bot$ by \ref{c:inf+}. From Lemma \ref{lemma:exfalsobot} we get $\Vdash^{*}_{\mathcal{C}} \chi$, hence by arbitrariness of $\mathcal{C}$ we conclude $\Gamma ; \Delta\Vdash^{*}_{\mathcal{B}} \chi$ by either \ref{c:inf+} or \ref{c:inf-}, thence we have $ \Gamma; \Delta \Vdash^*\chi$ by either \ref{c:2int+} or \ref{c:2int-}.

\medskip

\item $(I\wedge +)$ If $\Gamma_1;\Delta_1\Vdash^+ \phi$ and $\Gamma_2;\Delta_2\Vdash^+ \psi$ then $\Gamma_1,\Gamma_2;\Delta_1,\Delta_2\Vdash^+ \phi \wedge \psi$.

\medskip 
\noindent Assume $\Gamma_1; \Delta_1 \Vdash^{+} \phi$ and $\Gamma_2; \Delta_2 \Vdash^{+} \phi$. Then (i) $\Gamma_1; \Delta_1 \Vdash^{+}_{\mathcal{B}} \phi $ and (ii) $\Gamma_2; \Delta_2 \Vdash^{+}_{\mathcal{B}} \psi$ hold for arbitrary $\mathcal{B}$ by \ref{c:2int+}. Pick any $\mathcal{B}$ and any $\mathcal{C} \supseteq \mathcal{B}$ with $\Vdash^{+}_{\mathcal{C}}\sigma$ and $\Vdash^{-}_{\mathcal{C}}\tau$ for every $\sigma\in\Gamma_1\cup\Gamma_2$ and $\tau\in\Delta_1\cup\Delta_2$. Then by (i) and \ref{c:inf+} we get $\Vdash^{+}_{\mathcal{C}} \phi$, and by (ii) and \ref{c:inf+} we get $\Vdash^{+}_{\mathcal{C}}\psi$. Hence  $\Vdash^{+}_{\mathcal{C}} \phi \wedge \psi$, and since $\mathcal{C}$ was arbitrary we get $\Gamma_1,\Gamma_2;\Delta_1,\Delta_2\Vdash_{\mathcal{B}}^+ \phi \wedge \psi$ by \ref{c:inf+}, thence from the arbitrariness of $\mathcal{B}$ we get $\Gamma_1,\Gamma_2;\Delta_1,\Delta_2\Vdash^+ \phi\wedge \psi$.
\bigskip

\item $(E\wedge_1 +)$. If $\Gamma;\Delta\Vdash^+ \phi\wedge \psi$ then $\Gamma;\Delta\Vdash^+\phi$.

\medskip 

\noindent Assume $\Gamma;\Delta\Vdash^+ \phi\wedge \psi$. Then $\Gamma;\Delta\Vdash_{\mathcal{B}}^+ \phi\wedge \psi$ holds in every $\mathcal{B}$ by \ref{c:2int+}. Pick an arbitrary $\mathcal{B}$, and let $\mathcal{C}$ be a base extending $\mathcal{B}$ such that $\Vdash^{+}_{\mathcal{C}}\gamma$ and $\Vdash^{-}_{\mathcal{C}}\delta$ for every $\gamma\in\Gamma$ and $\delta\in\Delta$. Then since $\Gamma;\Delta\Vdash_{\mathcal{B}}^+ \phi\wedge \psi$ we have $\Vdash_{\mathcal{C}} \phi\wedge \psi$ by \ref{c:inf+}, so also $\Vdash^{+}_{\mathcal{C}} \phi$ by \ref{c:and+}. By arbitrariness of $\mathcal{C}$ we conclude $\Gamma;\Delta\Vdash_{\mathcal{B}}^+ \phi$, thence by arbitrariness of $\mathcal{B}$ we conclude $\Gamma;\Delta\Vdash^+ \phi$ by \ref{c:2int+}.
\bigskip 

\item $(E\wedge_2 +)$. If $\Gamma;\Delta\Vdash^+ \phi\wedge 
\psi$ then $\Gamma;\Delta\Vdash^+ \psi$.

\medskip 
\noindent The proof is analogous to the one for $(E \wedge_{1} +)$.
\bigskip 

\item $(I \lor_{1} +)$ If $\Gamma ; \Delta \Vdash^{+} \phi$ then $\Gamma ; \Delta \Vdash^{+} \phi\lor \psi$.

\medskip

\noindent Assume $\Gamma; \Delta \Vdash^{+} \phi$. Then $\Gamma; \Delta \Vdash^{+}_{\mathcal{B}} \phi$ holds for every $\mathcal{B}$ by \ref{c:2int+}. For some arbitrary $\mathcal{B}$, pick any $\mathcal{C} \supseteq \mathcal{B}$ such that $\Vdash^{+}_{\mathcal{C}} \sigma$ for every $\sigma \in \Gamma$ and $\Vdash_{\mathcal{C}}^{-} \tau$ for every $\tau \in \Delta$. Then since $\Gamma; \Delta \Vdash^{+}_{\mathcal{B}} \phi$ we conclude $\Vdash^{+}_{\mathcal{C}} \phi$ by \ref{c:inf+}. Now pick any $\mathcal{D} \supseteq \mathcal{C}$ such that $\phi ; \emptyset \Vdash^{+}_{\mathcal{D}} p$ and $\psi ; \emptyset \Vdash^{+}_{\mathcal{D}} p$ for some arbitrary $p \in \At$. Since $\Vdash^{+}_{\mathcal{C}} \phi$, by monotonicity we conclude $\Vdash^{+}_{\mathcal{D}} \phi$, and since $\phi ; \emptyset \Vdash^{+}_{\mathcal{D}} p$ we conclude $\Vdash^{+}_{\mathcal{D}} p$ by \ref{c:inf+}. A similar argument shows that, for any $\mathcal{D} \supseteq \mathcal{C}$, if $\phi ; \emptyset \Vdash^{-}_{\mathcal{D}} p$ and $\psi ; \emptyset \Vdash^{-}_{\mathcal{D}} p$ then $\Vdash^{-}_\mathcal{D} p$. From this we conclude $\Vdash^{+}_{\mathcal{C}} \phi \lor \psi$ by \ref{c:or+}, hence by arbitrariness of $\mathcal{C}$ also $\Gamma ; \Delta \Vdash^{+}_{\mathcal{C}} \phi\lor \psi$ by \ref{c:inf+}, thence $\Gamma ; \Delta \Vdash^{+} \phi\lor \psi$ by \ref{c:2int+}.

\medskip

\item $(I \lor_{2} +)$ If $\Gamma;\Delta \Vdash^{+} \psi$ then $\Gamma;\Delta \Vdash^{+} \phi \lor \psi$.

\medskip

\noindent The proof is analogous to the one for $(I \lor_{1} +)$.

\bigskip
        
\item ($E \lor +$) If $\Gamma_{1}; \Delta_{1} \Vdash^+ \phi \lor \psi$ and $\Gamma_{2}, \phi; \Delta_{2} \Vdash^{*} \chi$ and $\Gamma_{3}, \psi; \Delta_{3} \Vdash^{*} \chi$ then $\Gamma_1, \Gamma_2, \Gamma_3; \Delta_1, \Delta_2, \Delta_3 \Vdash^* \chi$. 

\medskip

\noindent Assume $\Gamma_{1}; \Delta_{1} \Vdash^+ \phi \lor \psi$ and $\Gamma_{2}, A; \Delta_{2} \Vdash^{+} \chi$ and $\Gamma_{3}, \psi; \Delta_{3} \Vdash^{+} \chi$. Then (i) $\Gamma_1; \Delta_1 \Vdash^{+}_{\mathcal{B}} \phi \lor \psi$, (ii) $\Gamma_2, \phi; \Delta_2 \Vdash^{+}_{\mathcal{B}} \chi$ and (iii) $\Gamma_3, \psi; \Delta_3 \Vdash^{+}_{\mathcal{B}} \chi$ hold for arbitrary $\mathcal{B}$. Now let $\mathcal{C}$ be some base extending $\mathcal{B}$ such that $\Vdash^{+}_{\mathcal{C}}\sigma$ and $\Vdash^{-}_{\mathcal{C}}\tau$ for every $\sigma\in\Gamma_1\cup\Gamma_2\cup\Gamma_3$ and $\tau\in\Delta_1\cup\Delta_2\cup\Delta_3$.  Note that, in particular, $\Vdash^{+}_{\mathcal{C}}\gamma$ and $\Vdash^{-}_{\mathcal{C}}\delta$ for every $\gamma\in\Gamma_1$ and $\delta\in\Delta_1$, so from (i) we get $\Vdash^{+}_{\mathcal{C}} \phi \lor \psi$ by \ref{c:inf+}. Now pick any $\mathcal{D} \supseteq \mathcal{C}$ such that $\Vdash^{+}_{\mathcal{D}} \phi$. Since $\Vdash^{+}_{\mathcal{C}} \theta$ and $\Vdash^{-}_{\mathcal{C}} \omega$ for every $\theta \in \Gamma_{2}$ and $\omega \in \Delta_{2}$, by monotonicity we have $\Vdash^{+}_{\mathcal{D}} \theta$ and $\Vdash^{-}_{\mathcal{D}} \omega$, so from (ii) we obtain $\Vdash^{+}_{\mathcal{D}} \chi$ by \ref{c:inf+}, and by arbitrariness of $\mathcal{D}$ and \ref{c:inf+} we conclude $\phi ; \emptyset \Vdash^{+}_{\mathcal{C}} \chi$. A similar argument shows $\psi ; \emptyset \Vdash^{+}_{\mathcal{C}} \chi$. Since $\Vdash^{+}_{\mathcal{C}} \phi \lor \psi$, $\phi ; \emptyset \Vdash^{+}_{\mathcal{C}} \chi$ and $\psi ; \emptyset \Vdash^{+}_{\mathcal{C}} \chi$ we get $\Vdash^{+}_{\mathcal{C}} \chi$ by Lemma \ref{lemma:disjunctiongeneral}, hence by arbitrariness of $\mathcal{C}$ and \ref{c:inf+} we conclude  $\Gamma_1, \Gamma_2, \Gamma_3; \Delta_1, \Delta_2, \Delta_3 \Vdash^+_{\mathcal{B}} \chi$, thence $\Gamma_1, \Gamma_2, \Gamma_3; \Delta_1, \Delta_2, \Delta_3 \Vdash^+ \chi$ by arbitrariness of $\mathcal{B}$ and \ref{c:2int+}. A similar argument shows that by assuming  $\Gamma_{1}; \Delta_{1} \Vdash^+ \phi \lor \psi$ and $\Gamma_{2}, \phi; \Delta_{2} \Vdash^{-} \chi$ and $\Gamma_{3}, \psi; \Delta_{3} \Vdash^{-} \chi$ we obtain $\Gamma_1, \Gamma_2, \Gamma_3; \Delta_1, \Delta_2, \Delta_3 \Vdash^- \chi$

\bigskip

\item $(I\rightarrow +)$. If $\Gamma,\phi;\Delta\Vdash^+ \psi$ then $\Gamma;\Delta\Vdash \phi \rightarrow \psi$.

\medskip 

\noindent Assume $\Gamma, \phi;\Delta\Vdash^+ \psi$. Then $\Gamma, \phi;\Delta\Vdash^+_{\mathcal{B}} \psi$ holds for all $\mathcal{B}$ by \ref{c:2int+}. Let $\mathcal{B}$ be some base and $\mathcal{C}$ a base extending $\mathcal{B}$ such that $\Vdash_{\mathcal{C}}^+\gamma$ and $\Vdash_{\mathcal{C}}^-\delta$, for all $\gamma\in\Gamma$ and all $\delta\in\Delta
$. Furthermore, let $\mathcal{D}$ be a base such that $\mathcal{D}\supseteq\mathcal{C}$ and $\Vdash_{\mathcal{D}}^+ \phi$. By monotonicity we have $\Vdash_{\mathcal{D}}^+\gamma$ and $\Vdash_{\mathcal{D}}^-\delta$ for all $\gamma\in\Gamma$ and $\delta\in\Delta
$, so since $\Gamma,\phi;\Delta\Vdash^+_{\mathcal{B}}\psi$, we conclude $\Vdash_{\mathcal{D}}^+ \psi$, by \ref{c:inf+}. By arbitrariness of $\mathcal{D}$ we conclude $\phi ; \emptyset \Vdash^{+}_{\mathcal{C}} \psi$ by \ref{c:inf+}, hence by \ref{c:to+} we have $\Vdash_{\mathcal{C}}^+\phi\rightarrow \psi$, so $\Gamma;\Delta\Vdash_{\mathcal{B}}\phi\rightarrow \psi$ by \ref{c:inf+}. By arbitrariness of $\mathcal{B}$ and \ref{c:2int+}, we conclude $\Gamma;\Delta\Vdash \phi\rightarrow \psi$.

\bigskip 

\item $(E\rightarrow +)$. If $\Gamma_1;\Delta_1\Vdash^+ \phi\rightarrow \psi$ and $\Gamma_2;\Delta_2\Vdash^+ \phi$ then $\Gamma_1,\Gamma_2;\Delta_1,\Delta_2\Vdash^+ \psi$.
\medskip 

\noindent Assume $\Gamma_1; \Delta_1 \Vdash^{+} \phi \rightarrow \psi$ and $\Gamma_2; \Delta_2 \Vdash^{+} \phi$. Then (i) $\Gamma_1; \Delta_1 \Vdash^{+}_{\mathcal{B}} \phi \rightarrow \psi$ and (ii) $\Gamma_2; \Delta_2 \Vdash^{+}_{\mathcal{B}} \phi$ hold for arbitrary $\mathcal{B}$ by \ref{c:2int+}. Let $\B$ be any base and $\mathcal{C}$ an extension of it such that $\Vdash_{\mathcal{C}}^+\gamma$ and $\Vdash_{\mathcal{C}}^-\delta$ for all $\gamma\in\Gamma_1\cup\Gamma_2$ and all $\delta\in\Delta_1\cup\Gamma_2
$. By (i) and \ref{c:inf+}, we get $\Vdash_{\mathcal{C}}^+\phi\rightarrow \psi$, hence $\phi;\emptyset\Vdash_{\mathcal{C}}^+\psi$ by \ref{c:to+}, and by (ii) and \ref{c:inf+} we have $\Vdash_{\mathcal{C}}^+\phi$. We conclude that $\Vdash_{\mathcal{C}}^+\psi$ by \ref{c:inf+}, whence, by arbitrariness of $\mathcal{C}$ and \ref{c:inf+} again, $\Gamma_1,\Gamma_2;\Delta_1,\Delta_2\Vdash_{\mathcal{B}}^+ \psi$. By arbitrariness of $\mathcal{B}$ and \ref{c:2int+},we conclude $\Gamma_1,\Gamma_2;\Delta_1,\Delta_2\Vdash^+ \psi$.

\bigskip
\item $(I\mapsfrom +)$. If $\Gamma_1;\Delta_1\Vdash^+ \phi$ and $\Gamma_2;\Delta_2\Vdash^- \psi$ then $\Gamma_1,\Gamma_2;\Delta_1,\Delta_2\Vdash^+ \phi\mapsfrom \psi$.
\medskip 

    \noindent Assume $\Gamma_1;\Delta_1\Vdash^+ \phi$ and $\Gamma_2;\Delta_2\Vdash^- \psi$. Then, for any $\mathcal{B}$, we have (i) $\Gamma_1;\Delta_1\Vdash_{\mathcal{B}}^+ \phi$ and (ii) $\Gamma_2;\Delta_2\Vdash_{\mathcal{B}}^- \psi$, by \ref{c:2int+}. Let $\B$ be some base, and take $\mathcal{C}$ extending $\mathcal{B}$ such that $\Vdash_{\mathcal{C}}^+\gamma$ and $\Vdash_{\mathcal{C}}^-\delta$, for all $\gamma\in\Gamma_1\cup\Gamma_2$ and all $\delta\in\Delta_1\cup\Delta_2$. By monotonicity and (i), we have $\Gamma_1;\Delta_1\Vdash_{\mathcal{C}}^+ \phi$, and by (ii) we have $\Gamma_1;\Delta_1\Vdash_{\mathcal{C}}^- \psi$, whence $\Vdash_{\C}^+\phi$ and $\Vdash_{\C}^-\psi$, by \ref{c:inf+} and \ref{c:inf-}, respectively. Thus, by \ref{c:mapsfrom+} we have  $\Vdash_{\mathcal{C}}^+\phi\mapsfrom \psi$, so $\Gamma_1,\Gamma_2;\Delta_1,\Delta_2\Vdash_{\mathcal{B}}^+ \phi\mapsfrom \psi$ by \ref{c:inf+} and arbitrariness of $\mathcal{C}$. By arbitrariness of $\mathcal{B}$ and \ref{c:2int+}, we conclude $\Gamma_1,\Gamma_2;\Delta_1,\Delta_2\Vdash^+ \phi\mapsfrom \psi$.

\bigskip

\item $(E\mapsfrom_1 +)$. If $\Gamma;\Delta\Vdash^+ \phi\mapsfrom \psi$ then $\Gamma;\Delta\Vdash^+ \phi$. 

\medskip 

\noindent Assume $\Gamma;\Delta\Vdash^+ \phi\mapsfrom \psi$. Then, by \ref{c:2int+},  $\Gamma;\Delta\Vdash_{\mathcal{B}}^+ \phi\mapsfrom \psi$ holds for any $\mathcal{B}$. Pick any $\mathcal{B}$ and let $\mathcal{C}$ be a base extending $\mathcal{B}$ such that $\Vdash_{\mathcal{C}}^+\gamma$ and $\Vdash_{\mathcal{C}}^-\delta$, for all $\gamma\in\Gamma$ and all $\delta\in\Delta
$. Then, by monotonicity, $\Gamma;\Delta\Vdash_{\mathcal{C}}^+\phi\mapsfrom \psi$, whence $\Vdash_{\mathcal{C}}^+\phi\mapsfrom \psi$ by \ref{c:inf+},  and so $\Vdash_{\mathcal{C}}^+\phi$ by \ref{c:mapsfrom+}. We conclude by arbitrainess of $\mathcal{C}$ and \ref{c:inf+} that $\Gamma;\Delta\Vdash_{\mathcal{B}}^+ \phi$ holds for any base $\mathcal{B}$, whence by \ref{c:2int+} $\Gamma;\Delta\Vdash^+ \phi$.
\bigskip

\item $(E\mapsfrom_2 +)$. If $\Gamma;\Delta\Vdash^+ \phi\mapsfrom \psi$ then $\Gamma;\Delta\Vdash^- \psi$. 

\medskip 
The proof is analogous to the one for $(E \mapsfrom_{1} +)$.
\bigskip

\item The induction steps for $(\bot -)$, $(\top -)$, $(I \lor -)$, $(E_{1} \lor -)$, $(E_{2} \lor -)$, $(I_{1} \land -)$, $(I_{2} \land -)$, $(E \land -)$, $(I \mapsfrom -)$, $(E \mapsfrom -)$, $(I \to-)$, $(E \to_{1} -)$, $(E \to_{2} -)$ are symmetrical with the induction steps for $(\top +)$, $(\bot +)$, $(I \land +)$, $(E_{1} \land +)$, $(E_{2} \land +)$, $(I_{1} \lor +)$, $(I_{2} \lor +)$, $(E \lor +)$, $(I \to +)$, $(E \to +)$, $(I \mapsfrom +)$, $(E \mapsfrom_{1} +)$ and $(E \mapsfrom_{2} +)$, respectively, so their proofs are committed. The steps for $(\top-)$ and $(\land -)$ also use Lemmas \ref{lemma:exfalsotop} and \ref{lemma:conjunctionongeneral} instead of \ref{lemma:exfalsobot} and \ref{lemma:disjunctiongeneral}.

        \end{enumerate}
  
\end{proof}

\subsection{Completeness}

We now proceed to prove completeness of bilateral base-extension semantics with respect to the natural deduction system $\BintNA$ for $\Bint$. The proof follows the traditional strategy due to Sandqvist \cite{Sandqvist2015IL} of creating a atomic simulation base capable of mirroring the logical rules of the system.

\begin{definition}
   If $\Gamma$ is a set of formulas, then $\Gamma^{Sub} = \{ \phi \ | \ \phi $ is a subformula of $\psi$ and $\psi \in \Gamma\}$
\end{definition}

\begin{definition}
  An \textit{atomic mapping} $\alpha$ for a set $\Gamma^{Sub}$ is any injective function $\alpha: \Gamma^{Sub} \mapsto \At$ such that, for all $p \in \Gamma^{Sub}$ with $p \in \At$, $\alpha(p) = p$.

\end{definition}

\noindent For every formula $\phi$, we may denote by $p^{\phi}$ the unique atom to which $\phi$ is assigned.

\begin{definition}
    Given an atomic mapping $\alpha$ for $\Gamma$, a \textit{simulation base} $\mathcal{U}$ is a base containing the following atomic rules for every  $q \in \At$, and no other:
\end{definition}

\begin{prooftree}
\AxiomC{$\Gamma, [p^{\phi}]; \Delta$}
\noLine
\UnaryInfC{$\Pi$}
\UnaryInfC{$p^{\psi}$}
\RightLabel{\tiny{$I(+), p^{\phi\to \psi}$}}
\UnaryInfC{$p^{\phi \to \psi}$}
\DisplayProof
\qquad
\AxiomC{$\Gamma_1 ; \Delta_1$}
\noLine
\UnaryInfC{$\Pi_1$}
\UnaryInfC{$p^{\phi\to \psi}$}
\AxiomC{$\Gamma_2; \Delta_{2}$}
\noLine
\UnaryInfC{$\Pi_2$}
\UnaryInfC{$p^{\phi}$}
\RightLabel{\tiny{$ E (+),p^{\phi\to \psi}$}}
\BinaryInfC{$p^{\psi}$}
\end{prooftree}

\begin{prooftree}
\AxiomC{$\Gamma; \Delta$}
\noLine
\UnaryInfC{$\Pi$}
\UnaryInfC{$p^{\phi}$}
\RightLabel{\tiny{$I_{1} (+),p^{\phi\vee \psi}$}}
\UnaryInfC{$p^{\phi\vee \psi}$}
\DisplayProof
\quad
\AxiomC{$\Gamma; \Delta$}
\noLine
\UnaryInfC{$\Pi$}
\UnaryInfC{$p^{\psi}$}
\RightLabel{\tiny{$I_{2} (+),p^{\phi\vee \psi}$}}
\UnaryInfC{$p^{\phi\vee \psi}$}
\DisplayProof

\bigskip

\AxiomC{$\Gamma_1; \Delta_{1}$}
\noLine
\UnaryInfC{$\Pi_1$}
\UnaryInfC{$p^{\phi\vee \psi}$}
\AxiomC{$\Gamma_2, [p^{\phi}]; \Delta_{2}$}
\noLine
\UnaryInfC{$\Pi_2$}
\UnaryInfC{$q$}
\AxiomC{$\Gamma_3, [p^{\psi}]; \Delta_{3}$}
\noLine
\UnaryInfC{$\Pi_3$}
\UnaryInfC{$q$}
\RightLabel{\tiny{$ E_{1} (+), p^{\phi\vee \psi}, q$}}
\TrinaryInfC{$q$}
\DisplayProof

\bigskip

\AxiomC{$\Gamma_1; \Delta_{1}$}
\noLine
\UnaryInfC{$\Pi_1$}
\UnaryInfC{$p^{\phi\vee \psi}$}
\AxiomC{$\Gamma_2, [p^{\phi}]; \Delta_{2}$}
\noLine
\UnaryInfC{$\Pi_2$}
\doubleLine
\UnaryInfC{$q$}
\AxiomC{$\Gamma_3, [p^{\psi}]; \Delta_{3}$}
\noLine
\UnaryInfC{$\Pi_3$}
\doubleLine
\UnaryInfC{$q$}
\doubleLine
\RightLabel{\tiny{$ E_{2} (+), p^{\phi\vee \psi}, q$}}
\TrinaryInfC{$q$}
\end{prooftree}

\begin{prooftree}
\AxiomC{$\Gamma_1; \Delta_{1}$}
\noLine
\UnaryInfC{$\Pi_1$}
\UnaryInfC{$p^{\phi}$}
\AxiomC{$\Gamma_2; \Delta_{2}$}
\noLine
\UnaryInfC{$\Pi_2$}
\UnaryInfC{$p^{\psi}$}
\RightLabel{\tiny{$ I (+),p^{\phi\wedge \psi}$}}
\BinaryInfC{$p^{\phi\wedge \psi}$}
\DisplayProof
\qquad
\AxiomC{$\Gamma; \Delta$}
\noLine
\UnaryInfC{$\Pi$}
\UnaryInfC{$p^{\phi\wedge \psi}$}
\RightLabel{\tiny{$ E_{1} (+),p^{\phi\wedge \psi}$}}
\UnaryInfC{$p^{\phi}$}
\DisplayProof
\qquad
\AxiomC{$\Gamma; \Delta$}
\noLine
\UnaryInfC{$\Pi$}
\UnaryInfC{$p^{\phi\wedge \psi}$}
\RightLabel{\tiny{$ E_{2} (+), p^{\phi\wedge \psi}$}}
\UnaryInfC{$p^{\psi}$}
\end{prooftree}

\begin{prooftree}
\AxiomC{$\Gamma_{1}; \Delta_{1}$}
\noLine
\UnaryInfC{$\Pi_{1}$}
\UnaryInfC{$p^{\phi}$}
\AxiomC{$\Gamma_{1}; \Delta_{1}$}
\noLine
\UnaryInfC{$\Pi_{1}$}
\doubleLine
\UnaryInfC{$p^{ \psi}$}
\RightLabel{\tiny{$I (+),p^{\phi\mapsfrom \psi}$}}
\BinaryInfC{$p^{\phi\mapsfrom \psi}$}
\DisplayProof
\quad
\AxiomC{$\Gamma; \Delta$}
\noLine
\UnaryInfC{$\Pi$}
\UnaryInfC{$p^{\phi\mapsfrom \psi}$}
\RightLabel{\tiny{$ E_{1} (+),p^{\phi\mapsfrom \psi}$}}
\UnaryInfC{$p^{\phi}$}
\DisplayProof
\qquad
\AxiomC{$\Gamma; \Delta$}
\noLine
\UnaryInfC{$\Pi$}
\UnaryInfC{$p^{\phi\mapsfrom \psi}$}
\RightLabel{\tiny{$ E_{2} (+), p^{\phi\mapsfrom \psi}$}}
\doubleLine
\UnaryInfC{$p^{\psi}$}
\end{prooftree}

\begin{prooftree}
\AxiomC{}
\RightLabel{\tiny{$ p^{\top} (+)$}}
\UnaryInfC{$p^{\top}$}
\DisplayProof
\quad
\AxiomC{}
\RightLabel{\tiny{$ p^{\bot} (-)$}}
\doubleLine
\UnaryInfC{$p^{\bot}$}
\end{prooftree}

\begin{prooftree}
\AxiomC{$\Gamma; \Delta$}
\noLine
\UnaryInfC{$\Pi$}
\UnaryInfC{$p^{\bot}$}
\RightLabel{\tiny{$p^{\bot} (+)_{1}, q$}}
\UnaryInfC{$q$}
\DisplayProof
\quad
\AxiomC{$\Gamma; \Delta$}
\noLine
\UnaryInfC{$\Pi$}
\doubleLine
\UnaryInfC{$p^{\top}$}
\doubleLine
\RightLabel{\tiny{$p^{\top} (-), q$}}
\UnaryInfC{$q$}
\DisplayProof
\quad
\AxiomC{$\Gamma; \Delta$}
\noLine
\UnaryInfC{$\Pi$}
\UnaryInfC{$p^{\bot}$}
\doubleLine
\RightLabel{\tiny{$p^{\bot} (+)_{2}, q$}}
\UnaryInfC{$q$}
\DisplayProof
\quad
\AxiomC{$\Gamma; \Delta$}
\noLine
\UnaryInfC{$\Pi$}
\doubleLine
\UnaryInfC{$p^{\top}$}
\RightLabel{\tiny{$p^{\top} (-)_{2}, q$}}
\UnaryInfC{$q$}
\end{prooftree}

\begin{prooftree}
\AxiomC{$\Gamma; \Delta,\llbracket p^{\psi}\rrbracket$}
\noLine
\UnaryInfC{$\Pi$}
\doubleLine
\UnaryInfC{$p^{\phi}$}
\RightLabel{\tiny{$I(-), p^{\phi\mapsfrom \psi}$}}
\doubleLine
\UnaryInfC{$p^{\phi\mapsfrom \psi}$}
\DisplayProof
\qquad
\AxiomC{$\Gamma_1 ; \Delta_1$}
\noLine
\UnaryInfC{$\Pi_1$}
\doubleLine
\UnaryInfC{$p^{\phi\mapsfrom \psi}$}
\AxiomC{$\Gamma_2; \Delta_{2}$}
\noLine
\UnaryInfC{$\Pi_2$}
\doubleLine
\UnaryInfC{$p^{\psi}$}
\RightLabel{\tiny{$ E (-),p^{\phi\mapsfrom \psi}$}}
\doubleLine
\BinaryInfC{$p^{\phi}$}
\end{prooftree}

\begin{prooftree}
\AxiomC{$\Gamma; \Delta$}
\noLine
\UnaryInfC{$\Pi$}
\doubleLine
\UnaryInfC{$p^{\phi}$}
\RightLabel{\tiny{$I_{1} (-),p^{\phi\wedge \psi}$}}
\doubleLine
\UnaryInfC{$p^{\phi\wedge \psi}$}
\DisplayProof
\quad
\AxiomC{$\Gamma; \Delta$}
\noLine
\UnaryInfC{$\Pi$}
\doubleLine
\UnaryInfC{$p^{\psi}$}
\RightLabel{\tiny{$I_{2} (-),p^{\phi\wedge \psi}$}}
\doubleLine
\UnaryInfC{$p^{\phi\wedge \psi}$}
\DisplayProof

\bigskip

\AxiomC{$\Gamma_1; \Delta_{1}$}
\noLine
\UnaryInfC{$\Pi_1$}
\doubleLine
\UnaryInfC{$p^{\phi \wedge \psi}$}
\AxiomC{$\Gamma_2; \Delta_{2}, \llbracket p^{\phi}\rrbracket$}
\noLine
\UnaryInfC{$\Pi_2$}
\doubleLine
\UnaryInfC{$q$}
\AxiomC{$\Gamma_3; \Delta_{3}, \llbracket p^{\psi}\rrbracket$}
\noLine
\UnaryInfC{$\Pi_3$}
\doubleLine
\UnaryInfC{$q$}
\doubleLine
\RightLabel{\tiny{$ E_{1} (-), p^{\phi \wedge \psi}, q$}}
\TrinaryInfC{$q$}
\DisplayProof

\bigskip

\AxiomC{$\Gamma_1; \Delta_{1}$}
\noLine
\UnaryInfC{$\Pi_1$}
\doubleLine
\UnaryInfC{$p^{\phi \wedge \psi}$}
\AxiomC{$\Gamma_2; \Delta_{2}, \llbracket p^{\phi}\rrbracket$}
\noLine
\UnaryInfC{$\Pi_2$}
\UnaryInfC{$q$}
\AxiomC{$\Gamma_3; \Delta_{3}, \llbracket p^{\psi}\rrbracket$}
\noLine
\UnaryInfC{$\Pi_3$}
\UnaryInfC{$q$}
\RightLabel{\tiny{$ E_{2} (-), p^{\phi \wedge \psi}, q$}}
\TrinaryInfC{$q$}
\end{prooftree}

\begin{prooftree}
\AxiomC{$\Gamma_1; \Delta_{1}$}
\noLine
\UnaryInfC{$\Pi_1$}
\doubleLine
\UnaryInfC{$p^{\phi}$}
\AxiomC{$\Gamma_2; \Delta_{2}$}
\noLine
\UnaryInfC{$\Pi_2$}
\doubleLine
\UnaryInfC{$p^{\psi}$}
\doubleLine
\RightLabel{\tiny{$ I (-),p^{\phi \vee \psi}$}}
\BinaryInfC{$p^{\phi \vee \psi}$}
\DisplayProof
\qquad
\AxiomC{$\Gamma; \Delta$}
\noLine
\UnaryInfC{$\Pi$}
\doubleLine
\UnaryInfC{$p^{\phi \vee \psi}$}
\doubleLine
\RightLabel{\tiny{$ E_{1} (-),p^{\phi \vee \psi}$}}
\UnaryInfC{$p^{\phi}$}
\DisplayProof
\qquad
\AxiomC{$\Gamma; \Delta$}
\noLine
\UnaryInfC{$\Pi$}
\doubleLine
\UnaryInfC{$p^{\phi \vee \psi}$}
\doubleLine
\RightLabel{\tiny{$ E_{2} (-), p^{\phi \vee \psi}$}}
\UnaryInfC{$p^{\psi}$}
\end{prooftree}

\begin{prooftree}
\AxiomC{$\Gamma_{1}; \Delta_{1}$}
\noLine
\UnaryInfC{$\Pi_{1}$}
\UnaryInfC{$p^{\phi}$}
\AxiomC{$\Gamma_{1}; \Delta_{1}$}
\noLine
\UnaryInfC{$\Pi_{1}$}
\doubleLine
\UnaryInfC{$p^{ \psi}$}
\doubleLine
\RightLabel{\tiny{$I (-),p^{\phi \to \psi}$}}
\BinaryInfC{$p^{\phi \to \psi}$}
\DisplayProof
\quad
\AxiomC{$\Gamma; \Delta$}
\noLine
\UnaryInfC{$\Pi$}
\doubleLine
\UnaryInfC{$p^{\phi \to \psi}$}
\RightLabel{\tiny{$ E_{1} (-),p^{\phi \to \psi}$}}
\UnaryInfC{$p^{\phi}$}
\DisplayProof
\qquad
\AxiomC{$\Gamma; \Delta$}
\noLine
\UnaryInfC{$\Pi$}
\doubleLine
\UnaryInfC{$p^{\phi \to \psi}$}
\RightLabel{\tiny{$ E_{2} (-), p^{\phi \to \psi}$}}
\doubleLine
\UnaryInfC{$p^{\psi}$}
\end{prooftree}

\bigskip
\bigskip
\bigskip

\begin{lemma}\label{lemma:simulationbase}
Let $\phi \in \Gamma^{Sub}$, $\alpha$ be an atomic mapping for $\Gamma^{Sub}$ and $\mathcal{U}$ a simulation base defined with $\alpha$. Then, $\forall \mathcal{B} (\mathcal{B} \supseteq \mathcal{U})$, ($\Vdash^{*}_{\mathcal{B}} \phi$ iff $\vdash^{*}_{\mathcal{B}} p^{\phi}$).
\end{lemma}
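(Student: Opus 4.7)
The plan is to prove the lemma by induction on the complexity of $\phi$. For the base case $\phi \in \At$, the atomic mapping gives $p^{\phi} = \phi$, so the biconditional reduces to $\Vdash^{*}_{\mathcal{B}} \phi$ iff $\vdash^{*}_{\mathcal{B}} \phi$, which is immediate from clauses \ref{c:at+} and \ref{c:at-}. The inductive steps split into cases by the principal connective of $\phi$ and by the sign $*$; throughout, monotonicity will be used freely, and the IH is always applied to proper subformulas of $\phi$ (which necessarily belong to $\Gamma^{Sub}$, so $p^{\psi}$ is defined for them).

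The \emph{easy} inductive cases are those whose semantic clauses are purely "conjunctive": $(\land +)$, $(\lor -)$, $(\mapsfrom +)$, $(\to -)$. For example, for $\phi = \psi_1 \land \psi_2$ under sign $+$, clause \ref{c:and+} gives $\Vdash^{+}_{\mathcal{B}} \psi_1 \land \psi_2$ iff $\Vdash^{+}_{\mathcal{B}} \psi_1$ and $\Vdash^{+}_{\mathcal{B}} \psi_2$, which by IH is equivalent to $\vdash^{+}_{\mathcal{B}} p^{\psi_1}$ and $\vdash^{+}_{\mathcal{B}} p^{\psi_2}$; the simulation rule $I(+), p^{\phi \land \psi}$ composes these into a proof of $p^{\psi_1 \land \psi_2}$, and $E_1(+), p^{\phi \land \psi}$ and $E_2(+), p^{\phi \land \psi}$ give the converse. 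The cases $(\to +)$ and $(\mapsfrom -)$ are only mildly harder, since their clauses involve a side hypothesis; here the trick is to pass to the extension $\mathcal{C} = \mathcal{B} \cup \{(/p^{\phi})\}$ obtained by adding the appropriate atomic axiom, apply IH in $\mathcal{C}$, then replace every use of the added axiom in the resulting atomic derivation by a proof (respectively refutation) assumption to obtain a deduction with open atomic assumption $p^{\phi}$ in $\mathcal{B}$ itself, which then feeds into the simulation's introduction rule.

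The \emph{hard} inductive cases are the clauses with universal quantification over atoms and extensions: $(\lor +)$, $(\land -)$, $(\bot +)$, $(\top -)$. Consider $(\lor +)$. In the forward direction, assume $\Vdash^{+}_{\mathcal{B}} \phi \lor \psi$; to prove $\vdash^{+}_{\mathcal{B}} p^{\phi \lor \psi}$, we instantiate the universal $q$ of clause \ref{c:or+} specifically by the atom $p^{\phi \lor \psi}$. For any $\mathcal{C} \supseteq \mathcal{B}$ with $\Vdash^{+}_{\mathcal{C}} \phi$, the IH yields $\vdash^{+}_{\mathcal{C}} p^{\phi}$, and then the simulation rule $I_1(+), p^{\phi \lor \psi}$ (present in $\mathcal{C}$ since $\mathcal{C} \supseteq \mathcal{U}$) yields $\vdash^{+}_{\mathcal{C}} p^{\phi \lor \psi}$; hence $\phi;\emptyset \Vdash^{+}_{\mathcal{B}} p^{\phi \lor \psi}$, and symmetrically $\psi;\emptyset \Vdash^{+}_{\mathcal{B}} p^{\phi \lor \psi}$. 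The analogous refutation-side statements follow the same pattern. Applying clause \ref{c:or+} at $q := p^{\phi \lor \psi}$ delivers $\Vdash^{+}_{\mathcal{B}} p^{\phi \lor \psi}$, hence $\vdash^{+}_{\mathcal{B}} p^{\phi \lor \psi}$ by \ref{c:at+}. In the backward direction, assume $\vdash^{+}_{\mathcal{B}} p^{\phi \lor \psi}$; to verify the $(\lor +)$ clause, pick any $\mathcal{C} \supseteq \mathcal{B}$ and $q \in \At$ with $\phi;\emptyset \Vdash^{+}_{\mathcal{C}} q$ and $\psi;\emptyset \Vdash^{+}_{\mathcal{C}} q$. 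Here the key step is converting $\phi;\emptyset \Vdash^{+}_{\mathcal{C}} q$ into $p^{\phi};\emptyset \vdash^{+}_{\mathcal{C}} q$: pass to $\mathcal{C}^{+} = \mathcal{C} \cup \{(/p^{\phi})\}$, so by IH in $\mathcal{C}^{+}$ we get $\Vdash^{+}_{\mathcal{C}^{+}} \phi$, clause \ref{c:inf+} yields $\Vdash^{+}_{\mathcal{C}^{+}} q$ and hence $\vdash^{+}_{\mathcal{C}^{+}} q$, and replacing the new axiom by a proof assumption yields $p^{\phi};\emptyset \vdash^{+}_{\mathcal{C}} q$. Similarly $p^{\psi};\emptyset \vdash^{+}_{\mathcal{C}} q$. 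The rule $E_1(+), p^{\phi \lor \psi}, q$ now produces $\vdash^{+}_{\mathcal{C}} q$, as required. The refutation-side consequent is handled with the rule $E_2(+), p^{\phi \lor \psi}, q$. Cases $(\land -)$, $(\bot +)$, and $(\top -)$ are treated analogously, instantiating the universal atom by $p^{\phi \land \psi}$, by every $p \in \At$ via the $p^{\bot}(+)_1,p^{\bot}(+)_2$ rules, and by every $p \in \At$ via the $p^{\top}(-), p^{\top}(-)_2$ rules, respectively.

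The main obstacle is this backward direction for the "second-order" clauses: the semantic hypothesis $\phi;\emptyset \Vdash^{+}_{\mathcal{C}} q$ must be translated into an atomic derivation that can serve as a minor premise of the simulated elimination rule. This is where the two-axiom-replacement manoeuvre is essential and is the only step that forces the lemma to be proved uniformly over all $\mathcal{B} \supseteq \mathcal{U}$ rather than for $\mathcal{U}$ alone; remaining connectives follow by duality, so their verifications are symmetrical to the ones above and can be omitted.
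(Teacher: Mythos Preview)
Your proposal is correct and follows essentially the same route as the paper's proof: induction on the complexity of $\phi$, with the ``conjunctive'' cases handled directly via the simulation rules, the $(\to +)$/$(\mapsfrom -)$ cases via the axiom-replacement trick, and the second-order cases $(\lor +)$, $(\land -)$, $(\bot +)$, $(\top -)$ by instantiating the universal atom at $p^{\phi}$ and again converting semantic side-hypotheses into atomic derivations. The only cosmetic difference is that where you perform the axiom-replacement manoeuvre inline (pass to $\mathcal{C}\cup\{(/p^{\phi})\}$, then substitute the axiom by an open assumption), the paper packages this once as Lemma~\ref{lemma:atomicsupportiffderivability} and simply cites it; the content is identical.
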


\begin{proof}
We prove the result by induction on the complexity of formulas, understood as the number of logical operators occurring on them.

\begin{enumerate}
    \item (Base case) The result follows trivially from the clauses \ref{c:at+} and \ref{c:at-} and the fact that $\alpha(p^{\chi}) = \chi$ whenever $\chi \in \At$.

    \bigskip
    \item $(\chi=\top, * = +)$. 
    \medskip

    \begin{enumerate}
        \item[$(\Rightarrow)$] Observe that  $\Vdash^{+}_{\mathcal{B}} \top$ always holds by \ref{c:top+} for any base $\mathcal{B}$. We must show that $\vdash^{+}_{\mathcal{B}} p^{\top}$ is always the case too. Note that the rule $ p^{\top} (+)$ is in $\mathcal{U}$, hence it is also in any base $\mathcal{B}$ extending $\mathcal{U}$. Therefore, by applying this rule, we obtain a proof of $\vdash^{+}_{\mathcal{B}} p^{\top}$ for any $\mathcal{B} \supseteq \mathcal{U}$. 
        \medskip

        \item[$(\Leftarrow)$] Assume $\vdash^{+}_{\mathcal{B}} p^{\top}$. Then $\Vdash^{+}_{\mathcal{B}} \top$, as this hold for any base $\mathcal{B}$ by \ref{c:top+}.
        \medskip
        
        
    \end{enumerate}
    \medskip
    
    \item $(\chi=\bot, * = +)$. 
    \medskip 
    
    \begin{enumerate}
        \item[$(\Rightarrow)$] Assume $\Vdash^{+}_{\mathcal{B}} \bot$. Then by \ref{c:bot+} it holds that $\Vdash^{+}_{\mathcal{B}} q$ and $\Vdash^{-}_{\mathcal{B}} q$ for any $q\in\mathsf{At}$. In particular, $\Vdash^{+}_{\mathcal{B}}p^{\bot}$, thus $\vdash^{+}_{\mathcal{B}}p^{\bot}$ by \ref{c:at+}.
        \medskip
        
        \item[$(\Leftarrow)$] Assume $\vdash^{+}_{\mathcal{B}}p^{\bot}$. By applying the rule $p^{\bot}(+)_{1},q$ at the end of the proof of $p^{\bot}$ we get a proof showing $\vdash^{+}_{\mathcal{B}}q$ for each $q\in\mathsf{At}$, and by applying the rule $p^{\bot}(+)_{2},q$ we also get a refutation showing $\vdash^{-}_{\mathcal{B}}q$ for each $q\in\mathsf{At}$. By \ref{c:at+} and \ref{c:at-} we conclude $\Vdash^{+}_{\mathcal{B}}q$ and $\Vdash^{-}_{\mathcal{B}}q$ for every $q \in \At$, so we conclude $\Vdash^{+}_{\mathcal{B}} \bot$ by \ref{c:bot+}.
        
        \medskip
        

    \end{enumerate}
    \medskip
    
    \item $(\chi = \phi \lor \psi, * = +)$.

    \medskip

    \begin{enumerate}
      \item[$(\Rightarrow)$] Assume $\Vdash^{+}_{\mathcal{B}} \phi\lor \psi$. By \ref{c:or+} we have that, for all extensions $\mathcal{C}$ of $\mathcal{B}$ and for every $p \in \At$, $\phi ; \emptyset \Vdash^{+}_{\mathcal{C}} p$ and $\psi ; \emptyset \Vdash^{+}_{\mathcal{C}} p$ implies $\Vdash^{+}_{\mathcal{C}} p$. Let $\mathcal{C}$ be an arbitrary extension of $\mathcal{B}$ with $\Vdash^{+}_{\mathcal{C}} \phi$. The induction hypothesis yields $\vdash^{+}_{\mathcal{C}} p^{\phi}$, so by applying $I_{1} (+), p^{\phi \lor \psi}$ at the end of the proof showing $p^{\phi}$ we get a proof showing $\vdash^{+}_{\mathcal{C}} p^{\phi \lor \psi}$, hence we conclude $\Vdash^{+}_{\mathcal{C}} p^{\phi \lor \psi}$ by \ref{c:at+}. Since $\mathcal{C}$ was an arbitrary extension of $\mathcal{B}$ with $\Vdash^{+}_{\mathcal{C}} \phi$ we conclude $\phi ; \emptyset \Vdash^{+}_{\mathcal{B}} p^{\phi \lor \psi}$ by \ref{c:inf+}. A similar argument yields $\psi ; \emptyset \Vdash^{+}_{\mathcal{B}} p^{\phi \lor \psi}$. Since $\Vdash^{+}_{\mathcal{B}} \phi\lor \psi$,
       holds as well as $\phi ; \emptyset \Vdash^{+}_{\mathcal{B}} p^{\phi \lor \psi}$ and $\psi ; \emptyset \Vdash^{+}_{\mathcal{B}} p^{\phi \lor \psi}$ we conclude $\Vdash^{+}_{\mathcal{B}} p^{\phi\lor \psi}$ by \ref{c:or+}, so $\vdash^{+}_{\mathcal{B}} p^{\phi\lor \psi}$ by \ref{c:at+}.

      \medskip

      \item[$(\Leftarrow)$] Assume $\vdash^{+}_{\mathcal{B}} p^{\phi \lor \psi}$. Let $\mathcal{C}$ be any extension of $\mathcal{B}$ with $\phi ; \emptyset \Vdash^{+}_{\mathcal{C}} q$ and $\psi ; \emptyset \Vdash^{+}_{\mathcal{C}} q$ for some atom $q$. Let $\mathcal{D}$ be any extension of $\mathcal{C}$ with $\Vdash^{+}_{\mathcal{D}} p^{\phi}$. Then $\vdash^{+}_{\mathcal{D}} p^\phi$ by \ref{c:at+}, so the induction hypothesis yields $\Vdash^{+}_{\mathcal{D}} \phi$. Since $\phi ; \emptyset \Vdash_{\mathcal{C}} q$ and $\mathcal{D} \supseteq \mathcal{C}$ we conclude $\Vdash^{+}_{\mathcal{D}} q$ by \ref{c:inf+}. But $\mathcal{D}$ was an arbitrary extension of $\mathcal{C}$ with $\Vdash^{+}_{\mathcal{D}} p^{\phi}$, so we conclude $p^{\phi};\emptyset\Vdash^{+}_{\mathcal{C}} q$ by \ref{c:inf+}, whence by Lemma \ref{lemma:atomicsupportiffderivability} we have $p^{\phi} ; \emptyset \vdash^{+}_{\mathcal{C}} q$.  A similar argument yields $p^{\psi} ; \emptyset \vdash^{+}_{\mathcal{C}} q$. Since $\mathcal{C} \supseteq \mathcal{B}$ and $\vdash^{+}_{\mathcal{B}} p^{\phi \lor \psi}$ we also have $\vdash^{+}_{\mathcal{C}} p^{\phi \lor \psi}$, hence we can apply $E_{1} (+), p^{\phi \lor \psi}, q$ using as premises the deductions showing $\vdash^{+}_{\mathcal{C}} p^{\phi \lor \psi}$, $p^{\phi} ; \emptyset \vdash^{+}_{\mathcal{C}} q$ and $p^{\psi} ; \emptyset \vdash^{+}_{\mathcal{C}} p$ to obtain a deduction showing $\vdash^{+}_{\mathcal{C}} q$, whence $\Vdash^{+}_{\mathcal{C}} q$ by \ref{c:at+}. A similar argument shows that, for any $\mathcal{C} \supseteq \mathcal{B}$ with $\phi ; \emptyset \Vdash^{-}_{\mathcal{C}} q$ and $\psi ; \emptyset \Vdash^{-}_{\mathcal{C}} q$ for some atom $q$, we can get a refutation showing $\vdash^{-}_{\mathcal{C}} q$ through an application of $E_{2} (+), p^{\phi \lor \psi}, q$, hence $\Vdash^{-}_{\mathcal{C}} q$ by \ref{c:at-}. Since $\mathcal{C}$ and $q$ are arbitrary we have shown that, for all $\mathcal{C} \supseteq \mathcal{B}$ and all $q \in \At$, $(\phi; \emptyset \Vdash_{\mathcal{C}}^+ q$ and $\psi; \emptyset \Vdash_{\mathcal{C}}^+ q$ implies $\Vdash_{\mathcal{C}}^+ q)$ and $(\phi; \emptyset \Vdash_{\mathcal{C}}^- q$ and $\psi; \emptyset \Vdash_{\mathcal{C}}^- q$ implies $\Vdash_{\mathcal{C}}^- q)$, hence $\Vdash^{+}_{\mathcal{B}} \phi \lor \psi$ by \ref{c:or+}.





  \end{enumerate}
\medskip
  
  \item $(\chi=\phi\wedge \psi, * = +).$
  \medskip
  \begin{enumerate}
      \item[$(\Rightarrow)$] Assume $\Vdash^{+}_{\mathcal{B}}\phi\wedge \psi$. Then $\Vdash^{+}_{\mathcal{B}}\phi$ and $\Vdash^{+}_{\mathcal{B}}\psi$ by \ref{c:and+}. The induction hypothesis yields $\vdash^{+}_{\mathcal{B}}p^\phi$ and $\vdash^{+}_{\mathcal{B}}p^\psi$, so by applying $I(+),p^{\phi\wedge \psi}$ we obtain a proof of $\vdash^{+}_{\mathcal{B}}p^{\phi\wedge \psi}$.
      
      \medskip
      
      \item[$(\Leftarrow)$] Assume $\vdash^{+}_{\mathcal{B}}p^{\phi\wedge \psi}$. By applying $E_1(+),p^{\phi\wedge \psi}$ at the end of this proof we show $\vdash^{+}_{\mathcal{B}} p^\phi$. Similarly, we obtain a proof showing $\vdash^{+}_{\mathcal{B}} p^\psi$ by applying $E_2(+),p^{\phi\wedge \psi}$. The induction hypothesis yields $\Vdash^{+}_{\mathcal{B}}\phi$ and $\Vdash^{+}_{\mathcal{B}}\psi$, hence $\Vdash^{+}_{\mathcal{B}}\phi\wedge \psi$ by \ref{c:and+}.
  \end{enumerate}
  \medskip
  
  \item $(\chi=\phi\rightarrow \psi, * = +).$
  \medskip
  \begin{enumerate}
      \item[$(\Rightarrow)$] Assume $\Vdash^{+}_{\mathcal{B}}\phi\rightarrow \psi$. By \ref{c:to+} we have $\phi; \emptyset \Vdash^{+}_{\mathcal{B}} \psi$. Let $\mathcal{C}$ be a base extending $\mathcal{B}$ such that $\Vdash^{+}_{\mathcal{C}}p^\phi$. Then  $\vdash^{+}_{\mathcal{C}}p^\phi$ by \ref{c:at+}, so the induction hypothesis yields $\Vdash^{+}_{\mathcal{C}}\phi$, hence we conclude $\Vdash^{+}_{\mathcal{C}}\psi$ by \ref{c:inf+}. The induction hypothesis yields $\vdash^{+}_{\mathcal{C}}p^\psi$ and thus $\Vdash^{+}_{\mathcal{C}}p^\psi$ by \ref{c:at+}, whence by \ref{c:inf+} and arbitrariness of $\mathcal{C}$ we conclude $p^{\phi}; \emptyset \Vdash^{+}_{\mathcal{B}} p^{\psi}$. By Lemma \ref{lemma:atomicsupportiffderivability} we have $p^{\phi}; \emptyset \vdash^{+}_{\mathcal{B}} p^{\psi}$, so we can apply $I(+),p^{\phi\rightarrow \psi}$ at the end of the deduction showing this to obtain a proof showing $\vdash^{+}_{\mathcal{B}} p^{\phi\rightarrow \psi}$.
      \medskip
      
      \item[$(\Leftarrow)$] Assume  $\vdash^{+}_{\mathcal{B}}p^{\phi\rightarrow \psi}$. Take a base $\mathcal{C}$ extending $\mathcal{B}$ such that $\Vdash^{+}_{\mathcal{C}}\phi$. The induction hypothesis yields $\vdash^{+}_{\mathcal{C}}p^\phi$. Using the proofs of $p^{\phi\rightarrow \psi}$ and $p^\phi$ as premises, we can apply rule $E(+),p^{\phi\rightarrow \psi}$ to obtain a proof of $p^{\psi}$, showing $\vdash^{+}_{\mathcal{C}}p^\psi$. The induction hypothesis then yields $\Vdash^{+}_{\mathcal{C}}\psi$, so we can conclude $\phi,\emptyset\Vdash^{+}_{\mathcal{B}}\psi$ by \ref{c:inf+} and arbitrariness of $\mathcal{C}$, whence $\Vdash^{+}_{\mathcal{C}} \phi \to \psi$ by \ref{c:to+}.
      
      
  \end{enumerate}
  \medskip 
  
  \item $(\chi=\phi\mapsfrom \psi, * = +)$
  \medskip
  \begin{enumerate}
      \item[$(\Rightarrow)$] Assume $\Vdash^{+}_{\mathcal{B}}\phi \mapsfrom \psi$. By \ref{c:mapsfrom+} we have $\Vdash^{+}_{\mathcal{B}}\phi$ and $\Vdash^{-}_{\mathcal{B}}\psi$, so the induction hypothesis yields $\vdash^{+}_{\mathcal{B}}p^\phi$ and $\vdash^{-}_{\mathcal{B}}p^\psi$. Using these deductions as premises of an application of $I(+),p^{\phi\mapsfrom \psi}$ we obtain a proof of $p^{\phi\mapsfrom \psi}$, showing $\vdash^{+}_{\mathcal{B}}p^{\phi\mapsfrom \psi}$.
      \medskip
      
      \item[$(\Leftarrow)$] Assume $\vdash^{+}_{\mathcal{B}}p^{\phi \mapsfrom \psi}$. By applying $E_1(+),p^{\phi\mapsfrom \psi}$ at the end of this proof we obtain $\vdash^{+}_{\mathcal{B}} p^{\phi\mapsfrom \psi}$, and by applying $E_2(+),p^{\phi\mapsfrom \psi}$ we obtain $\vdash^{-}_{\mathcal{B}}p^\psi$. The induction hypothesis yields $\Vdash^{+}_{\mathcal{B}} \phi$ and $\Vdash^{-}_{\mathcal{B}} \psi$, whence $\Vdash^{+}_{\mathcal{B}} \phi\mapsfrom \psi$ by \ref{c:mapsfrom+}.

  \end{enumerate}

  \medskip

  \item The inductive steps for $( \chi = \top, *= -)$, $( \chi = \bot, *= -)$, $( \chi = \phi \lor \psi, *= -)$, $( \chi = \phi \land \psi, *= -)$, $( \chi = \phi \to \psi, *= -)$, $( \chi = \phi \mapsfrom \psi, *= -)$ are respectively symmetrical with $( \chi = \bot, *= +)$, $( \chi = \top, *= +)$, $( \chi = \phi \land \psi, *= +)$, $( \chi = \phi \lor \psi, *= +)$,  $( \chi = \phi \mapsfrom \psi, *= +)$, $( \chi = \phi \to \psi, *= +)$, respectively, so we omit their proofs.
\end{enumerate}

\end{proof}

\begin{theorem}[Completeness]
    If $\Gamma;\Delta\Vdash^* \phi$ then $\Gamma;\Delta\vdash^*_{\BintNA} \phi$
\end{theorem}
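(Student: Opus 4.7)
The plan is to adapt the standard Sandqvist simulation-base argument to the bilateral setting, with Lemma~\ref{lemma:simulationbase} doing the heavy lifting. Let $\Sigma = \Gamma \cup \Delta \cup \{\phi\}$, fix an atomic mapping $\alpha$ for $\Sigma^{Sub}$, and let $\mathcal{U}$ be the corresponding simulation base. Lemma~\ref{lemma:simulationbase} then gives, for every $\chi \in \Sigma^{Sub}$ and every $\mathcal{B} \supseteq \mathcal{U}$, that $\Vdash^{*}_{\mathcal{B}} \chi$ iff $\vdash^{*}_{\mathcal{B}} p^{\chi}$.

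Next, I would form the base $\mathcal{C} \supseteq \mathcal{U}$ by adjoining a proof axiom with conclusion $p^{\gamma}$ for each $\gamma \in \Gamma$ and a refutation axiom with conclusion $p^{\delta}$ for each $\delta \in \Delta$. A single application of each new axiom yields $\vdash^{+}_{\mathcal{C}} p^{\gamma}$ and $\vdash^{-}_{\mathcal{C}} p^{\delta}$, so by clauses \ref{c:at+}, \ref{c:at-} and the simulation lemma we obtain $\Vdash^{+}_{\mathcal{C}} \gamma$ and $\Vdash^{-}_{\mathcal{C}} \delta$ for all such $\gamma, \delta$. Instantiating the hypothesis $\Gamma; \Delta \Vdash^{*} \phi$ at $\mathcal{C}$ via \ref{c:inf+} or \ref{c:inf-} then gives $\Vdash^{*}_{\mathcal{C}} \phi$, and the simulation lemma applied in the other direction yields $\vdash^{*}_{\mathcal{C}} p^{\phi}$.

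The heart of the proof, and the expected main obstacle, is the \emph{desimulation} step: turning the atomic deduction $\Pi$ of $p^{\phi}$ in $\mathcal{C}$ into a $\BintNA$ deduction witnessing $\Gamma; \Delta \vdash^{*}_{\BintNA} \phi$. I would define a translation $(\cdot)^{\circ}$ that replaces each atom $p^{\chi}$ occurring in $\Pi$ (for $\chi \in \Sigma^{Sub}$) by $\chi$, each application of a simulation rule labeled $I(\pm), p^{\chi}$ or $E_{i}(\pm), p^{\chi}$ by the matching $\BintNA$ introduction or elimination rule for the head connective of $\chi$, each use of an added proof axiom for $p^{\gamma}$ by the proof assumption $\gamma$, and each use of an added refutation axiom for $p^{\delta}$ by the refutation assumption $\delta$. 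A case-by-case inspection of the simulation rules shows that $(\cdot)^{\circ}$ sends them to valid $\BintNA$ inferences, preserving discharge markers as well as the signs (single versus double lines) of every line. The auxiliary rules $p^{\bot}(+)_{1}$, $p^{\bot}(+)_{2}$, $p^{\top}(-)_{1}$, $p^{\top}(-)_{2}$ translate to applications of $\bot(+)$ and $\top(-)$ yielding either a proof or a refutation conclusion, which is precisely the dotted-line freedom that distinguishes $\BintNA$ from $\BintN$ and explains why completeness is formulated for the former.

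Since the only leaves of $\Pi$ that are not themselves assumptions come from the added atomic axioms, the translated deduction $\Pi^{\circ}$ has open proof assumptions drawn from $\Gamma$ and open refutation assumptions drawn from $\Delta$ only, its conclusion is $\phi$, and, because $(\cdot)^{\circ}$ preserves line signs, its last-applied rule is a proof rule precisely when that of $\Pi$ is, yielding $\Gamma; \Delta \vdash^{*}_{\BintNA} \phi$ with the correct sign $*$.
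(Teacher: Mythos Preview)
Your proposal is correct and follows essentially the same Sandqvist-style simulation argument as the paper. The only procedural difference is that the paper quantifies over all $\mathcal{B}\supseteq\mathcal{U}$ supporting the $p^{\gamma}$ and $p^{\delta}$ to first obtain $\{p^{\gamma}\}_{\gamma\in\Gamma};\{p^{\delta}\}_{\delta\in\Delta}\Vdash^{*}_{\mathcal{U}} p^{\phi}$, then invokes Lemma~\ref{lemma:atomicsupportiffderivability} to get a derivation in $\mathcal{U}$ with open atomic assumptions before translating; you instead build the single extension $\mathcal{C}$ with axioms and fold the axiom-to-assumption replacement into the desimulation step, which is a harmless short-circuit of that lemma.
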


\begin{proof}
    Assume $\Gamma;\Delta\Vdash^* \phi$. Let $\Theta=(\Gamma\cup\Delta\cup\{\phi\})^{Sub}$. Furthermore, let $\alpha$ be an atomic mapping for $\Theta$ and $\mathcal{U}$ a simulation base based on $\alpha$. Then, $\Gamma;\Delta\Vdash_{\mathcal{U}}^* \phi$ holds by either \ref{c:2int+} or \ref{c:2int-}. Let $\mathcal{B}$ be a base extending $\mathcal{U}$ such that $\Vdash_{\B}^+ p^{\gamma}$ for every $\gamma\in\Gamma$, and $\Vdash_{\B}^- p^{\delta}$, for every $\delta\in\Delta$. Then, by \ref{c:at+} and \ref{c:at-} together with Lemma \ref{lemma:simulationbase}, we get that $\Vdash_{\B}^+ {\gamma}$ for every $\gamma\in\Gamma$, and $\Vdash_{\B}^- {\delta}$, for every $\delta\in\Delta$. Since $\Gamma;\Delta\Vdash_{\mathcal{U}}^* \phi$, by either \ref{c:inf+} or \ref{c:inf-} we conclude $\Vdash_{\B}^* \phi$, whence $\vdash_{\B}^* p^\phi$ again by Lemma \ref{lemma:simulationbase}. By arbitrariness of $\mathcal{B}$ and either \ref{c:inf+} or \ref{c:inf-} we conclude $\{p^{\gamma}\}_{\gamma\in\Gamma};\{p^{\Delta}\}_{\delta\in\Delta}\Vdash_{\mathcal{U}}^* p^\phi$. By Lemma \ref{lemma:atomicsupportiffderivability} this is equivalent to $\{p^{\gamma}\}_{\gamma\in\Gamma};\{p^{\Delta}\}_{\delta\in\Delta}\vdash_{\mathcal{U}}^* p^{\phi}$. To conclude this proof, note that every rule in $\mathcal{U}$ corresponds to an instance of a rule in $\BintNA$, hence by replacing every formula occurrence with shape $p^{\chi}$ by $\chi$ in the derivation showing $\{p^{\gamma}\}_{\gamma\in\Gamma};\{p^{\Delta}\}_{\delta\in\Delta}\vdash_{\mathcal{U}}^* p^{\phi}$ we obtain a deduction showing $\Gamma;\Delta\vdash^*_{\BintNA} \phi$.

\end{proof}
\subsection{Semantic harmony}

From the fact that proof and refutation rules establish assertion and rejection conditions for the \textit{same} logical connectives it follows that their choice cannot be arbitrary. Assertion and rejection are independent but also \textit{opposite} acts, so the proof conditions for a connective should also be meaningfully opposite to its refutation conditions.

A syntactic proof-theoretic characterization of this opposition is presented in \cite{francez2014bilateralism}, in which a notion of duality between formulas is used to justify the formulation of a horizontal inversion principle (``there must be a harmonious relation between the proof and refutation conditions for a connective"), introduced as a bilateralist version of Prawitz's traditional (vertical) inversion principle (``there must be a harmonious relation between the introduction and elimination rules of a connective") \cite{prawitz1965}. The paper shows that this opposition essentially consists in the requirement that there must exist an isomorphism between the proof conditions for a connective and the refutation conditions for its dual, which leads to bilaterally harmonious proof-theoretic behavior.

In this section we show that bilateral base-extension semantics allows us to provide \textit{purely semantic} (\textit{but still proof-theoretical}) characterizations of bilateral harmony if the notion of duality is extended from logical connectives to atomic rules, derivations and  bases. Turning back to Schroeder-Heister's distinction between proof-theoretic semantics as the study of \textit{semantics of proofs} and of \textit{semantics in terms of proofs}, this means that bilateral base-extension semantics, a semantics in terms of proofs and refutations, can also be used to study the semantics of proofs and refutations themselves, evidentiating that both aspects are intertwined also in a bilateralist setting.

Just like duality for a formula is defined by taking the formula with opposite proof and refutation conditions, the dual of any rule is given by the rule that opposes it in every possible way. The dual of a base is obtained by taking all rules dual to its own rules, and the dual of a deduction by replacing every proof assumption by a refutation assumption (and vice-versa), as well as every rule by its dual. Once this is done, it becomes possible to show that every deduction in a base has an isomorphic dual deduction with the opposite assertoric force in its dual base, so every provable atoms becomes refutable and every refutable atoms becomes provable. The opposition that exists between a formula and its dual then allows us to extend the results from atoms to all non-atomic formulas, meaning that the proof-theoretic harmony between a formula and its dual results in a  \textit{semantic harmony} between theirs proof and refutation clauses.

\begin{definition} \label{def:dualrules}
    Given any atomic proof rule $R{+}$ and any atomic refutation rule $R{-}$, their duals $(R+)^{\mathbb{D}}$ and  $(R-)^{\mathbb{D}}$ are defined as follows:

\begin{prooftree}
\AxiomC{$[\Gamma^{1}_{\At}]; \llbracket \Delta^{1}_{\At} \rrbracket$}
\noLine
\UnaryInfC{.}
\noLine
\UnaryInfC{.}
\noLine
\UnaryInfC{.}
\UnaryInfC{$p_1$}
\AxiomC{$\ldots$}
\AxiomC{$[\Gamma^{n}_{\At}]; \llbracket \Delta^{n}_{\At} \rrbracket$}
\noLine
\UnaryInfC{.}
\noLine
\UnaryInfC{.}
\noLine
\UnaryInfC{.}
\doubleLine
\UnaryInfC{$p_n$}
\RightLabel{\tiny{$R+$}}
\TrinaryInfC{$p$}
\DisplayProof \quad 
$\Longrightarrow$
\AxiomC{$[\Delta^{1}_{\At}];  \llbracket \Gamma^{1}_{\At} \rrbracket$}
\noLine
\UnaryInfC{.}
\noLine
\UnaryInfC{.}
\noLine
\UnaryInfC{.}
\doubleLine
\UnaryInfC{$p_1$}
\AxiomC{$\ldots$}
\AxiomC{$[\Delta^{n}_{\At}] ; \llbracket \Gamma^{n}_{\At} \rrbracket$}
\noLine
\UnaryInfC{.}
\noLine
\UnaryInfC{.}
\noLine
\UnaryInfC{.}
\UnaryInfC{$p_n$}
\doubleLine
\RightLabel{\tiny{$(R+)^{\mathbb{D}}$}}
\TrinaryInfC{$p$}
\end{prooftree}

\begin{prooftree}
\AxiomC{$[\Gamma^{1}_{\At}]; \llbracket \Delta^{1}_{\At} \rrbracket$}
\noLine
\UnaryInfC{.}
\noLine
\UnaryInfC{.}
\noLine
\UnaryInfC{.}
\UnaryInfC{$p_1$}
\AxiomC{$\ldots$}
\AxiomC{$[\Gamma^{n}_{\At}]; \llbracket \Delta^{n}_{\At} \rrbracket$}
\noLine
\UnaryInfC{.}
\noLine
\UnaryInfC{.}
\noLine
\UnaryInfC{.}
\doubleLine
\UnaryInfC{$p_n$}
\RightLabel{\tiny{$R-$}}
\doubleLine
\TrinaryInfC{$p$}
\DisplayProof \quad 
$\Longrightarrow$
\AxiomC{$[\Delta^{1}_{\At}];  \llbracket \Gamma^{1}_{\At} \rrbracket$}
\noLine
\UnaryInfC{.}
\noLine
\UnaryInfC{.}
\noLine
\UnaryInfC{.}
\doubleLine
\UnaryInfC{$p_1$}
\AxiomC{$\ldots$}
\AxiomC{$[\Delta^{n}_{\At}] ; \llbracket \Gamma^{n}_{\At} \rrbracket$}
\noLine
\UnaryInfC{.}
\noLine
\UnaryInfC{.}
\noLine
\UnaryInfC{.}
\UnaryInfC{$p_n$}
\RightLabel{\tiny{$(R-)^{\mathbb{D}}$}}
\TrinaryInfC{$p$}
\end{prooftree}

\end{definition}

\begin{definition}\label{def:dualdeductions}
    Let $\Pi$ be an atomic deduction. Its dual $(\Pi)^{\mathbb{D}}$ is defined as follows:

    \begin{enumerate}
       \item ($\Pi$ has $0$ rule applications). If $\Pi$ consists in a single proof assumption with shape $p$ then $(\Pi)^{\mathbb{D}}$ consists in a single refutation assumption with shape $p$, and if $\Pi$ consists in a single refutation assumption with shape $p$ then $(\Pi)^{\mathbb{D}}$ consists in a single proof assumption with shape $p$.

       \medskip

       \item ($\Pi$ has $n > 0$ rule application). If $R+$ is the last rule applied in $\Pi$ and has the deductions $\Pi^{1}, \ldots, \Pi^n$ above its premises then $(R+)^{\mathbb{D}}$ is the last rule applied in $(\Pi)^{\mathbb{D}}$, which has the deductions $(\Pi^{1})^{\mathbb{D}}, \ldots, (\Pi^n)^{\mathbb{D}}$ above its premises (in the same order). If $R-$ is the last rule applied in $\Pi$ and has the deductions $\Pi^{1}, \ldots, \Pi^n$ above its premises then $(R-)^{\mathbb{D}}$ is the last rule applied in $(\Pi)^{\mathbb{D}}$, which has the deductions $(\Pi^{1})^{\mathbb{D}}, \ldots, (\Pi^n)^{\mathbb{D}}$ above its premises (in the same order).



    \end{enumerate}
\end{definition}

\begin{definition}\label{def:dualbases}
    Let $\mathcal{B}$ be any atomic base. Then $(\mathcal{B})^{\mathbb{D}} = \{(R)^{\mathbb{D}} | R$ is an atomic rule of $\mathcal{B} \}$.
\end{definition}

From Definitions \ref{def:bilateralbase}, \ref{def:dualrules} and \ref{def:dualbases} it follows that the duals of rules are rules and the duals of bases are bases. We quickly show that the same holds for duals of deductions:

\begin{proposition} \label{prop:dualsdeductionsaredeductions}
If $\Pi$ is proof deduction with conclusion $p$ then $(\Pi)^{\mathbb{D}}$ is a refutation deduction with conclusion $p$, and if $\Pi$ is a refutation deduction with conclusion $p$ then $(\Pi)^{\mathbb{D}}$ is a proof deduction with conclusion $p$.
\end{proposition}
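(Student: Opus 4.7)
The plan is to proceed by straightforward induction on the number $n$ of rule applications occurring in $\Pi$, using Definitions \ref{def:deducatomicproofs}, \ref{def:deducatomicrefs}, \ref{def:dualrules} and \ref{def:dualdeductions}. The guiding observation is that the duality operation on rules has been designed precisely so that every single line above a premise or conclusion is flipped into a double line (and vice-versa), and every proof assumption slot is flipped into a refutation assumption slot (and vice-versa); the dual operation on deductions then propagates this flipping recursively.

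For the base case $n = 0$, the definition of atomic deductions forces $\Pi$ to be either a single proof assumption with shape $p$ (hence a proof deduction with conclusion $p$) or a single refutation assumption with shape $p$ (hence a refutation deduction with conclusion $p$). Clause 1 of Definition \ref{def:dualdeductions} directly produces, respectively, a single refutation assumption with shape $p$ or a single proof assumption with shape $p$, which satisfies the claim.

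For the inductive step $n > 0$, suppose first that $\Pi$ is a proof deduction, so its final rule application is an instance of a proof rule $R+$ with conclusion $p$ and immediate subdeductions $\Pi^{1}, \ldots, \Pi^{k}$ above its premises $p_{1}, \ldots, p_{k}$. By the constraint on premises recalled after Definition \ref{def:proofrulesrefutrulesandaxioms}, each $\Pi^{i}$ is a proof deduction whenever a single line stands above $p_{i}$ in $R+$ and a refutation deduction whenever a double line stands above $p_{i}$. Since each $\Pi^{i}$ contains fewer than $n$ rule applications, the induction hypothesis gives that $(\Pi^{i})^{\mathbb{D}}$ has the opposite type of $\Pi^{i}$ and the same conclusion $p_{i}$. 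Inspecting Definition \ref{def:dualrules}, the dual rule $(R+)^{\mathbb{D}}$ has (i) the same atomic conclusion $p$ but with a double line above it, so it is a refutation rule; and (ii) each premise $p_{i}$ still occurring but with its line marker flipped, and with its discharged assumption sets $\Gamma^{i}_{\At}$ and $\Delta^{i}_{\At}$ swapped. Consequently, each $(\Pi^{i})^{\mathbb{D}}$ is of exactly the type demanded by the corresponding premise of $(R+)^{\mathbb{D}}$, and its open proof (respectively refutation) assumptions lie within the swapped sets. Hence $(\Pi)^{\mathbb{D}}$ is a well-formed atomic deduction whose final rule application is a refutation rule concluding $p$, which by Definition \ref{def:deducatomicrefs} makes it a refutation deduction with conclusion $p$.

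The symmetric case, in which $\Pi$ is a refutation deduction ending with a refutation rule $R-$, proceeds in exactly the same way using the second half of Definition \ref{def:dualrules} and clause 2 of Definition \ref{def:dualdeductions}, yielding a proof deduction with conclusion $p$. The only place requiring genuine care is the bookkeeping in the inductive step, namely checking that the flipped line markers above the premises of $(R+)^{\mathbb{D}}$ match up with the flipped types of the $(\Pi^{i})^{\mathbb{D}}$ delivered by the induction hypothesis; this match is essentially built into Definition \ref{def:dualrules}, so no real obstacle arises.
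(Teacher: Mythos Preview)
Your proposal is correct and follows essentially the same approach as the paper: induction on the number of rule applications, with the base case handled directly by clause 1 of Definition \ref{def:dualdeductions} and the inductive step by observing that the flipped line markers in $(R*)^{\mathbb{D}}$ match the flipped types of the dual subdeductions delivered by the induction hypothesis. Your write-up is in fact slightly more explicit than the paper's about the premise-matching bookkeeping, which is the only point requiring any care.
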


\begin{proof}
 The result is shown by induction on the length of $\Pi$. If $\Pi$ is of length $0$ the result is immediate by Definitions \ref{def:dualdeductions}, \ref{def:deducatomicproofs} and \ref{def:deducatomicrefs}. If $\Pi$ has length $n >0$, let $R*$ be the last rule applied in it and $\Pi^{1}, \ldots , \Pi^{n}$ be the deductions occurring above its premises, which have shape $p_{1}, \ldots, p_{n}$. By the induction hypothesis we have that $(\Pi^{1})^{\mathbb{D}}, \ldots , (\Pi^{n})^{\mathbb{D}}$ are deductions with conclusions $p_{1}, \ldots, p_{n}$.  Since by Definition \ref{def:dualrules} the rule $(R*)^{\mathbb{D}}$ has premises with the same shape $p_{1}, \ldots, p_{n}$ of the rule $R*$ but requires a proof deduction of the premise whenever $R*$ required a refutation deduction of the premise (and vice-versa), the application of $(R*)^{\mathbb{D}}$ with premises $(\Pi^{1})^{\mathbb{D}}, \ldots , (\Pi^{n})^{\mathbb{D}}$ yields a valid deduction, hence from the definition of $(R*)^{\mathbb{D}}$ and Definitions \ref{def:deducatomicproofs} and \ref{def:deducatomicrefs} if follows that if $\Pi$ was a proof deduction then $(\Pi)^{\mathbb{D}}$ is a refutation deduction and if $\Pi$ was a refutation deduction then $(\Pi)^{\mathbb{D}}$ is a proof deduction.

\end{proof}

 
Before proceeding to the results, we briefly fix a new placeholder sign $(*)^{\mathbb{D}}$ by defining that if $(* = +)$ then $((*)^{\mathcal{D}} = -)$, and if $(* = -)$ then $((*)^{\mathcal{D}} = +)$.


\begin{lemma} \label{lemma:fundamentalatomicdualitylemma}
    If $\Pi$ is a deduction showing $\Gamma_{\At}; \Delta_{\At} \vdash^{*}_{\mathcal{B}} p$ then $(\Pi)^{\mathbb{D}}$ is a deduction showing $\Delta_{\At} ; \Gamma_{\At} \vdash^{(*)^{\mathbb{D}}}_{\mathcal{(B)^{\mathbb{D}}}} p$.
\end{lemma}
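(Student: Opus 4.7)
The plan is to prove the lemma by induction on the length of $\Pi$, mirroring the structure of the proof of Proposition \ref{prop:dualsdeductionsaredeductions} but now also tracking the open proof and refutation assumptions. The statement that $(\Pi)^{\mathbb{D}}$ is a well-formed deduction with the same conclusion and opposite sign $(*)^{\mathbb{D}}$ is already delivered by Proposition \ref{prop:dualsdeductionsaredeductions}, so the real content of this lemma is the swap $\Gamma_{\At} \leftrightarrow \Delta_{\At}$ at the level of open assumptions, together with the fact that $(\Pi)^{\mathbb{D}}$ uses only rules in $(\mathcal{B})^{\mathbb{D}}$ (which follows directly from Definition \ref{def:dualbases}).

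For the base case $\Pi$ has length $0$: if $\Pi$ is a single proof assumption with shape $p$, then it witnesses $p;\emptyset \vdash^{+}_{\mathcal{B}} p$, and Definition \ref{def:dualdeductions} makes $(\Pi)^{\mathbb{D}}$ a single refutation assumption of shape $p$, which witnesses $\emptyset;p \vdash^{-}_{(\mathcal{B})^{\mathbb{D}}} p$; the case of a single refutation assumption is symmetric. For the inductive step, suppose $\Pi$ has length $n>0$ and ends with an application of some rule $R*\in\mathcal{B}$ with premises of shape $p_{1},\ldots,p_{n}$, sub-deductions $\Pi^{1},\ldots,\Pi^{n}$, and local discharge sets $\Gamma^{i}_{\At}$ (proof assumptions) and $\Delta^{i}_{\At}$ (refutation assumptions) at branch $i$. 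The open proof/refutation assumptions of $\Pi$ are, by definition, $\Theta_{\At} = \bigcup_i (\Theta^{i}_{\At}\setminus\Gamma^{i}_{\At})$ and $\Sigma_{\At} = \bigcup_i (\Sigma^{i}_{\At}\setminus\Delta^{i}_{\At})$, where $\Theta^{i}_{\At}$ and $\Sigma^{i}_{\At}$ are the open assumptions of $\Pi^{i}$. The induction hypothesis applied to each $\Pi^{i}$ yields $(\Pi^{i})^{\mathbb{D}}$ witnessing $\Sigma^{i}_{\At};\Theta^{i}_{\At} \vdash^{(*_i)^{\mathbb{D}}}_{(\mathcal{B})^{\mathbb{D}}} p_{i}$, where $*_{i}$ records whether the $i$-th premise of $R*$ is marked with a single or double line. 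Definition \ref{def:dualrules} then produces $(R*)^{\mathbb{D}}\in(\mathcal{B})^{\mathbb{D}}$, whose $i$-th premise is $p_{i}$ with the opposite line-marking $(*_{i})^{\mathbb{D}}$, whose $i$-th discharge sets are $\Delta^{i}_{\At}$ (proof) and $\Gamma^{i}_{\At}$ (refutation), and whose conclusion carries the opposite sign $(*)^{\mathbb{D}}$. Applying $(R*)^{\mathbb{D}}$ with the $(\Pi^{i})^{\mathbb{D}}$ above its premises is therefore legal and produces exactly $(\Pi)^{\mathbb{D}}$ by Definition \ref{def:dualdeductions}; its open assumptions are $\bigcup_i(\Sigma^{i}_{\At}\setminus\Delta^{i}_{\At}) = \Sigma_{\At}$ (proof side) and $\bigcup_i(\Theta^{i}_{\At}\setminus\Gamma^{i}_{\At}) = \Theta_{\At}$ (refutation side).

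To finish, observe that since $\Pi$ shows $\Gamma_{\At};\Delta_{\At}\vdash^{*}_{\mathcal{B}} p$ we have $\Theta_{\At}\subseteq\Gamma_{\At}$ and $\Sigma_{\At}\subseteq\Delta_{\At}$ by Definitions \ref{def:deducatomicproofs} and \ref{def:deducatomicrefs}; the computation above shows that $(\Pi)^{\mathbb{D}}$ has open proof assumptions $\Sigma_{\At}\subseteq\Delta_{\At}$ and open refutation assumptions $\Theta_{\At}\subseteq\Gamma_{\At}$, so it witnesses $\Delta_{\At};\Gamma_{\At}\vdash^{(*)^{\mathbb{D}}}_{(\mathcal{B})^{\mathbb{D}}} p$. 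The main obstacle is purely bookkeeping: one must verify that dualization of a rule swaps \emph{every} syntactic ingredient consistently --- the line-markings above each premise, the classes of discharged assumptions at each branch, and the line-marking under the conclusion --- so that the application of $(R*)^{\mathbb{D}}$ fits together with the inductively produced sub-deductions. Once this coherence is made explicit by reading off Definition \ref{def:dualrules} branch-by-branch, the induction goes through without further difficulty.
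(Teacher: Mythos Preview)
Your proposal is correct and follows essentially the same approach as the paper: both proceed by induction on the length of $\Pi$, handle the length-$0$ case by the obvious swap of assumption kinds, and in the inductive step unpack the last rule $R*$, apply the induction hypothesis to the sub-deductions, and reassemble using $(R*)^{\mathbb{D}}\in(\mathcal{B})^{\mathbb{D}}$ while tracking that the discharge sets $\Gamma^{i}_{\At}$ and $\Delta^{i}_{\At}$ get swapped. Your explicit treatment of the subset conditions $\Theta_{\At}\subseteq\Gamma_{\At}$, $\Sigma_{\At}\subseteq\Delta_{\At}$ at the end is a nice touch that the paper leaves implicit.
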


\begin{proof}
    We show the result by induction on the length of $\Pi$.

    \medskip

\begin{enumerate}
    \item $(\Pi$ has length $0$). Then either it consists in a single proof assumption and is a deduction showing $p ; \emptyset \vdash^{\mathcal{+}}_{\mathcal{B}} p$ or in a single refutation assumption is a deduction showing $\emptyset ; p  \vdash^{\mathcal{-}}_{\mathcal{B}} p$ for some $p \in \At$. By Definition \ref{def:dualdeductions}, in the first case we have that $(\Pi)^{\mathbb{D}}$ consists in a single refutation assumption with shape $p$ and shows $\emptyset ; p  \vdash^{\mathcal{-}}_{(\mathcal{B})^{\mathbb{D}}} p$, and in the second that it consists in a single proof assumption with shape $p$ and showns $p ; \emptyset  \vdash^{\mathcal{+}}_{(\mathcal{B})^{\mathbb{D}}} p$, so the proof is already finished.

    \medskip

    \item ($\Pi$ has lenght greater than $0$). Without loss of generality, assume that the last rule applied in it has the following shape:

\begin{prooftree}
    \AxiomC{$[\Gamma^{1}_{\At}]; \llbracket \Delta^{1}_{\At} \rrbracket$}
\noLine
\UnaryInfC{.}
\noLine
\UnaryInfC{.}
\noLine
\UnaryInfC{.}
\UnaryInfC{$p_1$}
\AxiomC{$\ldots$}
\AxiomC{$[\Gamma^{n}_{\At}]; \llbracket \Delta^{n}_{\At} \rrbracket$}
\noLine
\UnaryInfC{.}
\noLine
\UnaryInfC{.}
\noLine
\UnaryInfC{.}
\doubleLine
\UnaryInfC{$p_n$}
\RightLabel{\tiny{$R+$}}
\TrinaryInfC{$p$}
\end{prooftree}
    
\end{enumerate}

Then above its premises there are deductions $\Pi^{1}, \ldots, \Pi^n$ showing $\Theta^{1}_{\At} ; \Sigma^{1}_{\At} \vdash^{+}_{\mathcal{B}}p_{1}$, $\ldots$, $\Theta^{n}_{\At} ; \Sigma^{n}_{\At} \vdash^{-}_{\mathcal{B}}p_{n}$, and $\Pi$ itself shows $\{\Theta^{1}_{\At} - \Gamma^{1}_{\At}\} \cup \ldots \cup \{\Theta^{n}_{\At} - \Gamma^{n}_{\At}\}; \{\Sigma^{1}_{\At} - \Delta^{1}_{\At}\} \cup \ldots \cup \{\Sigma^{n}_{\At} - \Delta^{n}_{\At}\} \vdash^{+}_{\mathcal{B}} p$. The induction hypothesis yields that the deductions $(\Pi^{1})^{\mathbb{D}}, \ldots, (\Pi^n)^\mathbb{D}$ show $\Sigma^{1}_{\At} ; \Theta^{1}_{\At} \vdash^{-}_{(\mathcal{B})^{\mathbb{D}}}p_{1}$, $\ldots$, $\Sigma^{n}_{\At} ; \Theta^{n}_{\At} \vdash^{+}_{(\mathcal{B})^{\mathbb{D}}}p_{n}$. By Definition \ref{def:dualdeductions} we have that $(\Pi)^{\mathbb{D}}$ has the deductions $(\Pi^{1})^{\mathbb{D}}, \ldots, (\Pi^n)^\mathbb{D}$ above its premises and ends with and application of $(R+)^{\mathbb{D}}$, which by Definition \ref{def:dualrules} has the following shape:

\begin{prooftree}
    \AxiomC{$[\Delta^{1}_{\At}];  \llbracket \Gamma^{1}_{\At} \rrbracket$}
\noLine
\UnaryInfC{.}
\noLine
\UnaryInfC{.}
\noLine
\UnaryInfC{.}
\doubleLine
\UnaryInfC{$p_1$}
\AxiomC{$\ldots$}
\AxiomC{$[\Delta^{n}_{\At}] ; \llbracket \Gamma^{n}_{\At} \rrbracket$}
\noLine
\UnaryInfC{.}
\noLine
\UnaryInfC{.}
\noLine
\UnaryInfC{.}
\UnaryInfC{$p_n$}
\doubleLine
\RightLabel{\tiny{$(R+)^{\mathbb{D}}$}}
\TrinaryInfC{$p$}
\end{prooftree}

Since $R+$ was used in $\Pi$ we have $R+ \in \mathcal{B}$, so by Definition \ref{def:dualbases} we conclude $(R+)^{\mathbb{D}} \in (\mathcal{B})^{\mathbb{D}}$, whence since $(\Pi^{1})^{\mathbb{D}}, \ldots, (\Pi^n)^\mathbb{D}$ are deductions in $(\mathcal{B})^{\mathbb{D}}$ we conclude that $(\Pi)^{\mathbb{D}}$ is indeed a deduction showing $\{\Sigma^{1}_{\At} - \Delta^{1}_{\At}\} \cup \ldots \cup \{\Sigma^{n}_{\At} - \Delta^{n}_{\At}\}; \{\Theta^{1}_{\At} - \Gamma^{1}_{\At}\} \cup \ldots \cup \{\Theta^{n}_{\At} - \Gamma^{n}_{\At}\}  \vdash^{-}_{(\mathcal{B})^{\mathbb{D}}} p$. If $\Pi$ ends with a refutation rule $R-$ instead the result can be shown through a similar line of reasoning.

\end{proof}

\begin{proposition} \label{remark}
    $((R*)^{\mathbb{D}} ) ^{\mathbb{D}} = R*$, $((\Pi)^{\mathbb{D}} ) ^{\mathbb{D}} = \Pi$ and $((\mathcal{B})^{\mathbb{D}} ) ^{\mathbb{D}} = \mathcal{B}$.
\end{proposition}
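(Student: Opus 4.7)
The plan is to prove the three identities in order, using each to feed into the next. Each claim is essentially a straightforward ``flipping twice returns to the origin'' argument, so the main work is in organizing the bookkeeping rather than in any real difficulty.

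For the first identity on atomic rules, I would inspect Definition \ref{def:dualrules} directly. Applying duality to a proof rule $R+$ swaps each assumption context $[\Gamma^i_{\At}]; \llbracket \Delta^i_{\At} \rrbracket$ into $[\Delta^i_{\At}]; \llbracket \Gamma^i_{\At} \rrbracket$, swaps each premise line style (single/double), and flips the conclusion line style. All four operations are involutive, and they are applied componentwise without interaction, so a second application returns the original pair $[\Gamma^i_{\At}]; \llbracket \Delta^i_{\At} \rrbracket$ together with the original line styles, hence the original shape of $R+$. The argument for a refutation rule $R-$ is identical.

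For the second identity on deductions, I would proceed by induction on the length of $\Pi$, matching Definition \ref{def:dualdeductions}. In the base case, $\Pi$ is a single proof (resp.\ refutation) assumption of shape $p$; its dual is a single refutation (resp.\ proof) assumption of shape $p$, whose dual is again a single proof (resp.\ refutation) assumption of shape $p$, matching $\Pi$. In the inductive step, suppose $\Pi$ ends with an application of an atomic rule $R*$ above deductions $\Pi^1,\dots,\Pi^n$. By Definition \ref{def:dualdeductions}, $(\Pi)^{\mathbb{D}}$ ends with $(R*)^{\mathbb{D}}$ applied to $(\Pi^1)^{\mathbb{D}},\dots,(\Pi^n)^{\mathbb{D}}$ in the same order; applying duality once more yields a deduction ending with $((R*)^{\mathbb{D}})^{\mathbb{D}}$ above $((\Pi^1)^{\mathbb{D}})^{\mathbb{D}},\dots,((\Pi^n)^{\mathbb{D}})^{\mathbb{D}}$. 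The first identity gives $((R*)^{\mathbb{D}})^{\mathbb{D}} = R*$, and the induction hypothesis gives $((\Pi^i)^{\mathbb{D}})^{\mathbb{D}} = \Pi^i$ for each $i$, so the result coincides with $\Pi$ as a tree-shaped deduction.

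For the third identity, I would unfold Definition \ref{def:dualbases} twice:
\[
((\mathcal{B})^{\mathbb{D}})^{\mathbb{D}} = \{ (R')^{\mathbb{D}} \mid R' \in (\mathcal{B})^{\mathbb{D}} \} = \{ ((R)^{\mathbb{D}})^{\mathbb{D}} \mid R \in \mathcal{B} \} = \{ R \mid R \in \mathcal{B} \} = \mathcal{B},
\]
where the penultimate step is the first identity applied rule-by-rule. The only ``obstacle'' worth flagging is the need, in the deduction case, to verify that the recursive construction of $(\Pi)^{\mathbb{D}}$ preserves the pairing between premise occurrences and subdeductions (so that dualizing twice really puts each $\Pi^i$ back above the premise it originally sat above); this is immediate from the clause ``in the same order'' in Definition \ref{def:dualdeductions}, and no deeper argument is needed.
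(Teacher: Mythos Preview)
Your proposal is correct and matches the paper's approach: the paper simply states that involutiveness is immediate for rules and bases, and that for deductions it follows by an induction on length along the lines of Proposition~\ref{prop:dualsdeductionsaredeductions}, which is exactly the structure you give. Your write-up is in fact more detailed than what the paper provides.
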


The involutive character of duality is immediate for rules an bases, and in the case of deductions (which is not required for the proofs that follow) it can be proved inductively though a straightforward adaptation of the proof of Proposition \ref{prop:dualsdeductionsaredeductions}.

\begin{corollary}\label{cor:maincorollary}
   $\Vdash^{*}_{\mathcal{B}} p$ iff $\Vdash^{(*)^{\mathbb{D}}}_{(\mathcal{B})^{\mathbb{D}}} p$ for any $p \in \At$.
\end{corollary}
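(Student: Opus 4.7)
The plan is to chain together the three preceding results: Lemma \ref{lemma:atomicsupportiffderivability}, Lemma \ref{lemma:fundamentalatomicdualitylemma}, and Proposition \ref{remark}. Since the statement concerns only atomic formulas with no open assumptions, atomic support coincides with atomic derivability, so the entire claim reduces to a statement about existence of atomic deductions, where the duality transformation gives a direct translation.

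First, I would unfold the support relations. By Lemma \ref{lemma:atomicsupportiffderivability} (taking $\Gamma_{\At} = \Delta_{\At} = \emptyset$), the equivalence $\Vdash^{*}_{\mathcal{B}} p \Leftrightarrow \vdash^{*}_{\mathcal{B}} p$ holds, and similarly $\Vdash^{(*)^{\mathbb{D}}}_{(\mathcal{B})^{\mathbb{D}}} p \Leftrightarrow \vdash^{(*)^{\mathbb{D}}}_{(\mathcal{B})^{\mathbb{D}}} p$. So it suffices to show $\vdash^{*}_{\mathcal{B}} p \Leftrightarrow \vdash^{(*)^{\mathbb{D}}}_{(\mathcal{B})^{\mathbb{D}}} p$.

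For the forward direction, suppose $\vdash^{*}_{\mathcal{B}} p$ is witnessed by a deduction $\Pi$ depending on no open assumptions. By Lemma \ref{lemma:fundamentalatomicdualitylemma} applied with $\Gamma_{\At} = \Delta_{\At} = \emptyset$, the dual deduction $(\Pi)^{\mathbb{D}}$ witnesses $\vdash^{(*)^{\mathbb{D}}}_{(\mathcal{B})^{\mathbb{D}}} p$, still with no open assumptions (since swapping two empty sets leaves them empty). For the converse direction, assume $\vdash^{(*)^{\mathbb{D}}}_{(\mathcal{B})^{\mathbb{D}}} p$ with witness $\Pi'$. Applying Lemma \ref{lemma:fundamentalatomicdualitylemma} once more yields a deduction $(\Pi')^{\mathbb{D}}$ showing $\vdash^{((*)^{\mathbb{D}})^{\mathbb{D}}}_{((\mathcal{B})^{\mathbb{D}})^{\mathbb{D}}} p$, which by Proposition \ref{remark} (and the fact that $((*)^{\mathbb{D}})^{\mathbb{D}} = *$ by the definition of the sign placeholder) is exactly $\vdash^{*}_{\mathcal{B}} p$.

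There is no real obstacle here — the work was already done in establishing Lemma \ref{lemma:fundamentalatomicdualitylemma} and the involutivity of duality. The only point requiring a small amount of care is confirming that the sign placeholder $(*)^{\mathbb{D}}$ is itself involutive, which is immediate from its definition ($+ \mapsto -$ and $- \mapsto +$), and that the lemma applies cleanly to the case of no open proof or refutation assumptions.
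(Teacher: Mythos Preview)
Your proposal is correct and follows essentially the same approach as the paper's proof. The only cosmetic difference is that you invoke Lemma \ref{lemma:atomicsupportiffderivability} with empty contexts where the paper appeals directly to the base clauses \ref{c:at+} and \ref{c:at-}, but these coincide in the empty-context case, and the remaining chain through Lemma \ref{lemma:fundamentalatomicdualitylemma} and Proposition \ref{remark} is identical.
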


\begin{proof}
    Assume $\Vdash^{+}_{\mathcal{B}} p$. By \ref{c:at+} we conclude that there must be an atomic proof $\Pi$ showing $\vdash^{+}_{\mathcal{B}} p$, hence by Lemma \ref{lemma:fundamentalatomicdualitylemma} we conclude that $(\Pi)^{\mathbb{D}}$ shows  $\vdash^{-}_{(\mathcal{B})^{\mathbb{D}}} p$, whence $\Vdash^{-}_{(\mathcal{B})^{\mathbb{D}}} p$ by \ref{c:at-}. A similar argument shows that if $\Vdash^{-}_{\mathcal{B}} p$ then $\Vdash^{+}_{(\mathcal{B})^{\mathbb{D}}} p$. For the converse, assume $\Vdash^{+}_{(\mathcal{B})^{\mathbb{D}}} p$. By \ref{c:at+} we conclude that there must be an atomic proof $\Pi$ showing $\vdash^{+}_{(\mathcal{B})^{\mathbb{D}}} p$, hence by Lemma \ref{lemma:fundamentalatomicdualitylemma} we conclude that $(\Pi)^{\mathbb{D}}$ shows $\vdash^{-}_{((\mathcal{B})^{\mathbb{D}})^{\mathbb{D}}} p$. But  $((\mathcal{B})^{\mathbb{D}} ) ^{\mathbb{D}} = \mathcal{B}$ by Proposition \ref{remark}, hence by \ref{c:at-} we conclude $\Vdash^{-}_{\mathcal{B}} p$. A similar argument shows that if $\Vdash^{-}_{(\mathcal{B})^{\mathbb{D}}} p$ then $\Vdash^{+}_{\mathcal{B}} p$.
\end{proof}

\begin{definition} \label{def:dualityformulas}
    Given any formula $\chi$, its dual $(\chi)^{\mathbb{D}}$ is defined inductively as follows:

    \begin{enumerate}
        \item $(p)^{\mathbb{D}} = p$, for $p \in \At$;
        \item $(\bot)^{\mathbb{D}} = \top$;
        \item $(\top)^{\mathbb{D}} = \bot$;
        \item $(\phi \lor \psi)^{\mathbb{D}} = (\phi)^{\mathbb{D}} \land (\psi)^{\mathbb{D}}$;
         \item $(\phi \land \psi)^{\mathbb{D}} = (\phi)^{\mathbb{D}} \lor (\psi)^{\mathbb{D}}$;
          \item $(\phi \to \psi)^{\mathbb{D}} = (\psi)^{\mathbb{D}} \mapsfrom (\phi)^{\mathbb{D}}$;
          \item $(\phi \mapsfrom \psi)^{\mathbb{D}} = (\psi)^{\mathbb{D}} \to (\phi)^{\mathbb{D}}$;
    \end{enumerate}
\end{definition}

\begin{lemma}\label{lemma:involutivenessformulas}
    $((\chi)^{\mathbb{D}})^{\mathbb{D}} = \chi$.

    \begin{proof}
    We prove the result by induction on the complexity of $\chi$.

    \begin{enumerate}
        \item $(\chi = p$ for some $p \in \At)$. Since $(p)^{\mathbb{D}} =p$ we have $((p)^{\mathbb{D}})^{\mathbb{D}} = (p)^{\mathbb{D}}$ and $((p)^{\mathbb{D}})^{\mathbb{D}} = p$.

        \medskip

        \item $(\chi = \bot)$. Since $(\bot)^{\mathbb{D}} = \top$ we have $((\bot)^{\mathbb{D}})^{\mathbb{D}} = (\top)^{\mathbb{D}}$ and since $(\top)^{\mathbb{D}} = \bot$ we have $((\bot)^{\mathbb{D}})^{\mathbb{D}} = \bot$.

\medskip
        
 \item $(\chi = \top)$. Since $(\top)^{\mathbb{D}} = \bot$ we have $((\top)^{\mathbb{D}})^{\mathbb{D}} = (\bot)^{\mathbb{D}}$ and since $(\bot)^{\mathbb{D}} = \top$ we have $((\top)^{\mathbb{D}})^{\mathbb{D}} = \top$.

 \medskip

 \item $(\chi = \phi \lor \psi)$. Since $(\phi \lor \psi)^{\mathbb{D}} = (\phi)^{\mathbb{D}} \land (\psi)^{\mathbb{D}}$ we have $((\phi \lor \psi)^{\mathbb{D}})^{\mathbb{D}} = ((\phi)^{\mathbb{D}} \land (\psi)^{\mathbb{D}})^{\mathbb{D}}$. Since $((\phi)^{\mathbb{D}} \land (\psi)^{\mathbb{D}})^{\mathbb{D}} = ((\phi)^{\mathbb{D}})^{\mathbb{D}} \lor ((\psi)^{\mathbb{D}})^{\mathbb{D}}$ and the induction hypothesis yields $((\phi)^{\mathbb{D}})^{\mathbb{D}} \lor ((\psi)^{\mathbb{D}})^{\mathbb{D}} = \phi \lor \psi$ we conclude $((\phi \lor \psi)^{\mathbb{D}})^{\mathbb{D}} = \phi \lor \psi$.

 \medskip

 \item $(\chi = \phi \land \psi)$. Since $(\phi \land \psi)^{\mathbb{D}} = (\phi)^{\mathbb{D}} \lor (\psi)^{\mathbb{D}}$ we have $((\phi \land \psi)^{\mathbb{D}})^{\mathbb{D}} = ((\phi)^{\mathbb{D}} \lor (\psi)^{\mathbb{D}})^{\mathbb{D}}$. Since $((\phi)^{\mathbb{D}} \lor (\psi)^{\mathbb{D}})^{\mathbb{D}} = ((\phi)^{\mathbb{D}})^{\mathbb{D}} \land ((\psi)^{\mathbb{D}})^{\mathbb{D}}$ and the induction hypothesis yields $((\phi)^{\mathbb{D}})^{\mathbb{D}} \land ((\psi)^{\mathbb{D}})^{\mathbb{D}} = \phi \land \psi$ we conclude $((\phi \land \psi)^{\mathbb{D}})^{\mathbb{D}} = \phi \land \psi$.

 \medskip

  \item $(\chi = \phi \to \psi)$. Since $(\phi \to \psi)^{\mathbb{D}} = (\psi)^{\mathbb{D}} \mapsfrom (\phi)^{\mathbb{D}}$ we have $((\phi \to \psi)^{\mathbb{D}})^{\mathbb{D}} = ((\psi)^{\mathbb{D}} \mapsfrom (\phi)^{\mathbb{D}})^{\mathbb{D}}$. Since $((\psi)^{\mathbb{D}} \mapsfrom (\phi)^{\mathbb{D}})^{\mathbb{D}} = ((\phi)^{\mathbb{D}})^{\mathbb{D}} \to ((\psi)^{\mathbb{D}})^{\mathbb{D}}$ and the induction hypothesis yields $((\phi)^{\mathbb{D}})^{\mathbb{D}} \to ((\psi)^{\mathbb{D}})^{\mathbb{D}} = \phi \to \psi$ we conclude $((\phi \to \psi)^{\mathbb{D}})^{\mathbb{D}} = \phi \to \psi$.

   \medskip

  \item $(\chi = \phi \mapsfrom \psi)$. Since $(\phi \mapsfrom \psi)^{\mathbb{D}} = (\psi)^{\mathbb{D}} \to (\phi)^{\mathbb{D}}$ we have $((\phi \mapsfrom \psi)^{\mathbb{D}})^{\mathbb{D}} = ((\psi)^{\mathbb{D}} \to (\phi)^{\mathbb{D}})^{\mathbb{D}}$. Since $((\psi)^{\mathbb{D}} \to (\phi)^{\mathbb{D}})^{\mathbb{D}} = ((\phi)^{\mathbb{D}})^{\mathbb{D}} \mapsfrom ((\psi)^{\mathbb{D}})^{\mathbb{D}}$ and the induction hypothesis yields $((\phi)^{\mathbb{D}})^{\mathbb{D}} \mapsfrom ((\psi)^{\mathbb{D}})^{\mathbb{D}} = \phi \mapsfrom \psi$ we conclude $((\phi \mapsfrom \psi)^{\mathbb{D}})^{\mathbb{D}} = \phi \mapsfrom \psi$.

    \end{enumerate}
    \end{proof}
\end{lemma}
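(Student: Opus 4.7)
The plan is a routine structural induction on the complexity of $\chi$, understood as the number of logical operators occurring in it, directly unfolding Definition~\ref{def:dualityformulas} twice in each case.

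For the base cases ($\chi = p$ with $p \in \At$, $\chi = \bot$, $\chi = \top$) no induction hypothesis is needed: clause (1) gives $((p)^{\mathbb{D}})^{\mathbb{D}} = (p)^{\mathbb{D}} = p$; clauses (2) and (3) immediately give $((\bot)^{\mathbb{D}})^{\mathbb{D}} = (\top)^{\mathbb{D}} = \bot$ and symmetrically for $\top$. These cases are the whole content of observing that $\bot$ and $\top$ form a swap-pair under $(\cdot)^{\mathbb{D}}$, and atoms are fixed.

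For the inductive step there are four binary cases, and the strategy in each is the same: apply the relevant clause of Definition~\ref{def:dualityformulas} once to compute $(\chi)^{\mathbb{D}}$, apply the matching dual clause to compute $((\chi)^{\mathbb{D}})^{\mathbb{D}}$, and then invoke the induction hypothesis on the strictly simpler subformulas $\phi$ and $\psi$. Concretely, for $\chi = \phi \land \psi$, clause (5) turns the outer connective into $\lor$ and clause (4) turns it back, yielding $((\phi)^{\mathbb{D}})^{\mathbb{D}} \land ((\psi)^{\mathbb{D}})^{\mathbb{D}}$, which equals $\phi \land \psi$ by IH; the disjunction case is symmetric.

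The one point that requires a moment of care, and is the closest the argument comes to having an ``obstacle'', is the interaction between $\to$ and $\mapsfrom$: clauses (6) and (7) not only swap the connective but also swap the order of the two arguments. Writing it out explicitly for $\chi = \phi \to \psi$, clause (6) gives $(\psi)^{\mathbb{D}} \mapsfrom (\phi)^{\mathbb{D}}$, and then clause (7) applied to that yields $((\phi)^{\mathbb{D}})^{\mathbb{D}} \to ((\psi)^{\mathbb{D}})^{\mathbb{D}}$, so the second swap undoes the first and the arguments end up in the original order; the IH then closes the case, and the argument for $\mapsfrom$ is identical up to relabelling. No further lemmata are needed, and the induction terminates because each binary clause strictly reduces the number of logical operators on the arguments to which the IH is applied.
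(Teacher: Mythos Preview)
Your proposal is correct and follows exactly the same approach as the paper: a structural induction on the complexity of $\chi$, unfolding Definition~\ref{def:dualityformulas} twice in each case and invoking the induction hypothesis on the subformulas in the four binary cases. Your explicit remark about the double argument-swap in the $\to$/$\mapsfrom$ clauses matches precisely what the paper's proof verifies in cases 6 and 7.
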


\begin{lemma}\label{lemma:baseinversion}
For any $\mathcal{B}$, if $\mathcal{C} \supseteq (\mathcal{B})^{\mathbb{D}}$ then $(\mathcal{C})^{\mathbb{D}} \supseteq \mathcal{B}$.
\end{lemma}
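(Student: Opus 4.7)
The plan is to reduce the statement to a pointwise claim about rules and then apply the involutivity of duality on rules established in Proposition \ref{remark}. First I would unpack Definition \ref{def:exts2}: the extension relation $\mathcal{C} \supseteq \mathcal{D}$ is nothing other than set inclusion of the rule-sets, i.e. every rule of $\mathcal{D}$ is a rule of $\mathcal{C}$. So the hypothesis $\mathcal{C} \supseteq (\mathcal{B})^{\mathbb{D}}$ rewrites as: for every rule $R' \in (\mathcal{B})^{\mathbb{D}}$, $R' \in \mathcal{C}$. The goal $(\mathcal{C})^{\mathbb{D}} \supseteq \mathcal{B}$ rewrites dually as: every $R \in \mathcal{B}$ lies in $(\mathcal{C})^{\mathbb{D}}$.

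The key step is the following chain. Pick an arbitrary $R \in \mathcal{B}$. By Definition \ref{def:dualbases}, $(R)^{\mathbb{D}} \in (\mathcal{B})^{\mathbb{D}}$. The hypothesis then gives $(R)^{\mathbb{D}} \in \mathcal{C}$. Applying Definition \ref{def:dualbases} once more, $((R)^{\mathbb{D}})^{\mathbb{D}} \in (\mathcal{C})^{\mathbb{D}}$. Finally, by the rule-level involutivity asserted in Proposition \ref{remark}, $((R)^{\mathbb{D}})^{\mathbb{D}} = R$, hence $R \in (\mathcal{C})^{\mathbb{D}}$. Since $R$ was arbitrary, $(\mathcal{C})^{\mathbb{D}} \supseteq \mathcal{B}$.

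There is essentially no obstacle: the proof is a one-line consequence of the fact that $(\cdot)^{\mathbb{D}}$ acts as an involution on rules, combined with the set-theoretic reading of $\supseteq$. The only conceptual care needed is to make sure we are invoking the correct clause of Proposition \ref{remark}, namely $((R*)^{\mathbb{D}})^{\mathbb{D}} = R*$, which is immediate from Definition \ref{def:dualrules} since dualising a rule swaps proof/refutation annotations and swaps proof/refutation assumption sets, and performing both swaps twice returns the original rule.
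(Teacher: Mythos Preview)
Your proof is correct and follows essentially the same approach as the paper's own proof: both pick an arbitrary rule $R \in \mathcal{B}$, push it through $(\cdot)^{\mathbb{D}}$ twice using Definition~\ref{def:dualbases} and the hypothesis, and then invoke the rule-level involutivity from Proposition~\ref{remark} to conclude $R \in (\mathcal{C})^{\mathbb{D}}$.
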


\begin{proof}
    Pick any $\mathcal{B}$ and any $\mathcal{C} \supseteq (\mathcal{B})^{\mathbb{D}}$. By Definition \ref{def:dualbases} we conclude that, for any rule $R$, $R \in \mathcal{B}$ implies $(R)^{\mathbb{D}} \in (\mathcal{B})^{\mathbb{D}}$, and since  $\mathcal{C} \supseteq (\mathcal{B})^{\mathbb{D}}$ by the definition of extensions also $(R)^{\mathbb{D}} \in \mathcal{C}$. By Definition \ref{def:dualbases} we also conclude that, for any $R$, $R \in \mathcal{C}$ implies $(R)^{\mathbb{D}} \in (\mathcal{C})^{\mathbb{D}}$. Now let $R$ be any rule with $R \in \mathcal{B}$. Then $(R)^{\mathbb{D}} \in \mathcal{C}$ and  $((R)^{\mathbb{D}})^{\mathbb{D}} \in (\mathcal{C})^{\mathbb{D}}$, so since $((R)^{\mathbb{D}})^{\mathbb{D}} = R$ we conclude $R \in (\mathcal{C})^{\mathbb{D}}$, whence by the definition of extensions and arbitrariness of $R$ we obtain  $(\mathcal{C})^{\mathbb{D}} \supseteq \mathcal{B}$.
    
\end{proof}

\begin{theorem}[Weak bilateral semantic harmony]\label{the:semanticharmony}
    $\Vdash^{*}_{\mathcal{B}} \chi$ iff $\Vdash^{(*)^{\mathbb{D}}}_{(\mathcal{B})^{\mathbb{D}}} (\chi)^{\mathbb{D}}$
\end{theorem}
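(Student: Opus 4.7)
\medskip

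\noindent \textbf{Proof plan.} The plan is to proceed by induction on the complexity of $\chi$, pairing up each connective with its dual and unfolding the semantic clauses of Definition \ref{def:bilateralvalidity} on both sides of the desired equivalence. The base case $\chi = p \in \At$ is immediate: since $(p)^{\mathbb{D}} = p$ by Definition \ref{def:dualityformulas}, the statement reduces to Corollary \ref{cor:maincorollary}, which is the atomic version of this harmony.

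\medskip

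\noindent The easy inductive steps are those whose validity clauses are purely local, i.e.\ expressed directly in terms of support for subformulas in the same base: the $(+)$ clause for $\top$ and $(-)$ clause for $\bot$ (both trivially valid); the pairs (\ref{c:and+},\ref{c:or-}), (\ref{c:to-},\ref{c:mapsfrom+}), and (\ref{c:mapsfrom+},\ref{c:to-}). For instance, for $\chi = \phi \land \psi$ with $* = +$, by \ref{c:and+} we have $\Vdash^{+}_{\mathcal{B}} \phi \land \psi$ iff $\Vdash^{+}_{\mathcal{B}} \phi$ and $\Vdash^{+}_{\mathcal{B}} \psi$; the induction hypothesis converts this into $\Vdash^{-}_{(\mathcal{B})^{\mathbb{D}}} (\phi)^{\mathbb{D}}$ and $\Vdash^{-}_{(\mathcal{B})^{\mathbb{D}}} (\psi)^{\mathbb{D}}$, which by \ref{c:or-} is $\Vdash^{-}_{(\mathcal{B})^{\mathbb{D}}} (\phi \land \psi)^{\mathbb{D}}$. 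The analogous calculations work symmetrically for the other local pairs.

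\medskip

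\noindent The substantive cases are those whose clauses quantify universally over extensions and hypothetical atomic entailments: the $(+)$ clause for $\bot$ and $(-)$ clause for $\top$ (paired with each other via Corollary \ref{cor:maincorollary} applied atom-by-atom), and the pairs (\ref{c:and-},\ref{c:or+}), (\ref{c:to+},\ref{c:mapsfrom-}) and (\ref{c:mapsfrom-},\ref{c:to+}). The key auxiliary fact I will use is that duality induces a bijection on extensions: by Lemma \ref{lemma:baseinversion} and the involutiveness of duality on bases (Proposition \ref{remark}), the map $\mathcal{D} \mapsto (\mathcal{D})^{\mathbb{D}}$ is an involution carrying extensions of $\mathcal{B}$ onto extensions of $(\mathcal{B})^{\mathbb{D}}$ and vice versa. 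Combined with Corollary \ref{cor:maincorollary} (to translate atomic support between a base and its dual) and the induction hypothesis (to translate support for subformulas), a hypothetical condition such as $\emptyset ; \phi \Vdash^{+}_{\mathcal{C}} p$ in the unfolding of \ref{c:and-} will be shown equivalent, by successively applying \ref{c:inf+}, the induction hypothesis on $\phi$, Corollary \ref{cor:maincorollary}, and re-folding via \ref{c:inf-}, to $(\phi)^{\mathbb{D}} ; \emptyset \Vdash^{-}_{(\mathcal{C})^{\mathbb{D}}} p$. This is exactly what the dual clause \ref{c:or+} for $(\phi)^{\mathbb{D}} \lor (\psi)^{\mathbb{D}} = (\phi \land \psi)^{\mathbb{D}}$ demands, after renaming $\mathcal{C}$ via the bijection.

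\medskip

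\noindent I expect the main obstacle to be the careful bookkeeping in the cases \ref{c:and-} and \ref{c:or+}, where the clauses contain a double universal quantification (over extensions $\mathcal{C}$ and atoms $p$) together with two implications involving hypothetical atomic entailments with both signs. The challenge is ensuring that all four sign-flips (for the two $\Vdash^{*}$ occurrences in each of the two nested conditionals) line up correctly under the involution $\mathcal{D} \mapsto (\mathcal{D})^{\mathbb{D}}$ and that Lemma \ref{lemma:involutivenessformulas} is invoked where needed so that e.g.\ $((\phi)^{\mathbb{D}})^{\mathbb{D}} = \phi$ closes the inductive argument in the reverse direction. Once this pattern is verified once, the remaining cases follow by symmetric arguments, and the theorem is obtained.
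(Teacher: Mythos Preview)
Your proposal is correct and follows essentially the same approach as the paper: induction on the complexity of $\chi$, with Corollary~\ref{cor:maincorollary} handling the base case, the ``local'' clauses (\ref{c:and+}/\ref{c:or-}, \ref{c:to-}/\ref{c:mapsfrom+}, and their symmetric counterparts) dispatched directly by the induction hypothesis, and the extension-quantifying clauses (\ref{c:or+}/\ref{c:and-}, \ref{c:to+}/\ref{c:mapsfrom-}, \ref{c:bot+}/\ref{c:top-}) handled via the bijection on extensions furnished by Lemma~\ref{lemma:baseinversion} and Proposition~\ref{remark}. Note only that your enumerations of the easy and hard pairs each contain a repeated entry (you list $(\to -,\mapsfrom +)$ and $(\mapsfrom +,\to -)$ as distinct, and similarly for the hard pairs), but this is a harmless slip in presentation rather than a gap in the argument.
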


\begin{proof}
    We prove the result by induction on the complexity of $\chi$.

    \begin{enumerate}
        \item $(\chi = p$ for $p \in \At)$.

        \medskip

    \noindent    The result follows immediately from Definition \ref{def:dualityformulas} and Corollary \ref{cor:maincorollary}.

 \medskip
 
   \item $(\chi = \top, * = +)$.

        \medskip

    \noindent    Immediate from \ref{c:top+}, \ref{c:bot-} and the fact that $(\top)^{\mathbb{D}} = \bot$.

     \medskip
 
   \item $(\chi = \bot, * = +)$.

        \medskip

\begin{enumerate}
    \item[$(\Rightarrow)$] Assume $\Vdash^{+}_{\mathcal{B}} \bot$. By \ref{c:bot+} we conclude $\Vdash^{+}_{\mathcal{B}} p$ and $\Vdash^{-}_{\mathcal{B}} p$ for all $p \in \At$. The induction hypothesis yields $\Vdash^{-}_{(\mathcal{B})^{\mathbb{D}}} (p)^{\mathbb{D}}$ and $\Vdash^{+}_{(\mathcal{B})^{\mathbb{D}}} (p)^{\mathbb{D}}$ for all $p \in \At$. By Definition \ref{def:dualityformulas} we have $(p)^{\mathbb{D}} = p$ for every $p \in \At$, so $\Vdash^{-}_{(\mathcal{B})^{\mathbb{D}}} p$ and $\Vdash^{+}_{(\mathcal{B})^{\mathbb{D}}} p$ hold  for all $p \in \At$, whence $\Vdash^{-}_{(\mathcal{B})^{\mathbb{D}}} \top$ by \ref{c:top-}, thence since $(\bot)^{\mathbb{D}} = \top$ we conclude $\Vdash^{-}_{(\mathcal{B})^{\mathbb{D}}} (\bot)^{\mathbb{D}}$.

    \medskip

    \item [$(\Leftarrow)$] Assume $\Vdash^{-}_{(\mathcal{B})^{\mathbb{D}}} (\bot)^{\mathbb{D}}$, which is equivalent to $\Vdash^{-}_{(\mathcal{B})^{\mathbb{D}}} \top$ by Definition \ref{def:dualityformulas}. By \ref{c:top-} we conclude $\Vdash^{+}_{(\mathcal{B})^{\mathbb{D}}} p$ and $\Vdash^{-}_{(\mathcal{B})^{\mathbb{D}}} p$ for all $p \in \At$, so also $\Vdash^{-}_{(\mathcal{B})^{\mathbb{D}}} (p)^{\mathbb{D}}$ and $\Vdash^{+}_{(\mathcal{B})^{\mathbb{D}}} (p)^{\mathbb{D}}$ for all $p \in \At$ by Definition \ref{def:dualityformulas}. The induction hypothesis yields $\Vdash^{+}_{\mathcal{B}} p$ and $\Vdash^{-}_{\mathcal{B}} p$ for all $p \in \At$, whence $\Vdash^{+}_{\mathcal{B}} \bot$ by \ref{c:top+}.
\end{enumerate}

\medskip

    \item $(\chi = \phi \lor \psi, * = +)$.

    \medskip

\begin{enumerate}
    \item[$(\Rightarrow)$]  Assume $\Vdash^{+}_{\mathcal{B}} \phi \lor \psi$. We have to show $\Vdash^{-}_{(\mathcal{B})^{\mathbb{D}}} (\phi)^{\mathbb{D}} \land (\psi)^{\mathbb{D}}$, which by Definition \ref{def:dualityformulas} is equivalent to $\Vdash^{-}_{(\mathcal{B})^{\mathbb{D}}} (\phi \lor \psi)^{\mathbb{D}}$.  Pick any $\mathcal{C} \supseteq (\mathcal{B})^{\mathbb{D}}$ such that $\emptyset ; (\phi)^{\mathbb{D}} \Vdash^{+}_{\mathcal{C}} p$ and $\emptyset ; (\psi)^{\mathbb{D}} \Vdash^{+}_{\mathcal{C}} p$ for some $p \in \At$.  First we show that $ \phi; \emptyset \Vdash^{-}_{(\mathcal{C})^{\mathbb{D}}} p$ and $\psi ; \emptyset \Vdash^{-}_{(\mathcal{C})^{\mathbb{D}}} p$ hold. Pick any $\mathcal{D} \supseteq (\mathcal{C})^{\mathbb{D}}$ such that $\Vdash^{+}_{\mathcal{D}} \phi$. The induction hypothesis yields $\Vdash^{-}_{(\mathcal{D})^{\mathbb{D}}} (\phi)^{\mathbb{D}}$. Since $\mathcal{D} \supseteq (\mathcal{C})^{\mathbb{D}}$ by Lemma \ref{lemma:baseinversion} we have $(\mathcal{D})^{\mathbb{D}} \supseteq \mathcal{C}$, hence since $\emptyset ; (\phi)^{\mathbb{D}} \Vdash^{+}_{\mathcal{C}} p$ and $\Vdash^{-}_{(\mathcal{D})^{\mathbb{D}}} (\phi)^{\mathbb{D}}$ we conclude  $ \Vdash^{+}_{(\mathcal{D})^{\mathbb{D}}} p$ by \ref{c:inf+}. The induction hypothesis yields  $ \Vdash^{-}_{((\mathcal{D})^{\mathbb{D}})^{\mathbb{D}}} (p)^{\mathbb{D}}$, which by Definition \ref{def:dualityformulas} and Proposition \ref{remark} is equivalent to $\Vdash^{-}_{\mathcal{D}} p$. Since $\mathcal{D}$ is an arbitrary extension of $(\mathcal{C})^{\mathbb{D}}$ with $\Vdash^{+}_{\mathcal{D}} \phi$ we conclude $ \phi; \emptyset \Vdash^{-}_{(\mathcal{C})^{\mathbb{D}}} p$ by \ref{c:inf-}. A similar argument yields $ \psi; \emptyset \Vdash^{-}_{(\mathcal{C})^{\mathbb{D}}} p$. Since $\mathcal{C} \supseteq (\mathcal{B})^{\mathbb{D}}$ by Lemma \ref{lemma:baseinversion} we have $(\mathcal{C})^{\mathbb{D}} \supseteq \mathcal{B}$. Since $ \phi; \emptyset \Vdash^{-}_{(\mathcal{C})^{\mathbb{D}}} p$ and $ \psi; \emptyset \Vdash^{-}_{(\mathcal{C})^{\mathbb{D}}} p$ both hold and $\Vdash^{+}_{\mathcal{B}} \phi \lor \psi$ also holds we conclude $\Vdash^{-}_{(\mathcal{C})^{\mathbb{D}}} p$ by \ref{c:or+}, hence $\Vdash^{-}_{(\mathcal{C})^{\mathbb{D}}} (p)^{\mathbb{D}}$ by Definition \ref{def:dualityformulas} and $\Vdash^{+}_{\mathcal{C}} p$ by the induction hypothesis. This means that, for any $\mathcal{C} \supseteq (\mathcal{B})^{\mathbb{D}}$ and any $p \in \At$, if $\emptyset ; (\phi)^{\mathbb{D}} \Vdash^{+}_{\mathcal{C}} p$ and $\emptyset ; (\psi)^{\mathbb{D}} \Vdash^{+}_{\mathcal{C}} p$ then $\Vdash^{+}_{C} p$. A similar argument shows that, for any $\mathcal{C} \supseteq (\mathcal{B})^{\mathbb{D}}$ and any $p \in \At$, if $\emptyset ; (\phi)^{\mathbb{D}} \Vdash^{-}_{\mathcal{C}} p$ and $\emptyset ; (\psi)^{\mathbb{D}} \Vdash^{-}_{\mathcal{C}} p$ then $\Vdash^{-}_{C} p$, thence by \ref{c:and-} and arbitrariness of $\mathcal{C}$ we finally conclude $\Vdash^{-}_{(\mathcal{B})^{\mathbb{D}}} (\phi)^{\mathbb{D}} \land (\psi)^{\mathbb{D}}$.

 \medskip

   \item[$(\Rightarrow)$]  Assume $\Vdash^{-}_{(\mathcal{B})^{\mathbb{D}}} (\phi \lor \psi)^{\mathbb{D}}$. By Definition \ref{def:dualityformulas} we conclude $\Vdash^{-}_{(\mathcal{B})^{\mathbb{D}}} (\phi)^{\mathbb{D}} \land (\psi)^{\mathbb{D}}$, and we have to show $\Vdash^{+}_{\mathcal{B}} \phi \lor \psi$. Pick any $\mathcal{C} \supseteq \mathcal{B}$ such that $\phi ; \emptyset \Vdash^{+}_{\mathcal{C}} p$ and $\psi; \emptyset \Vdash^{+}_{\mathcal{C}} p$ for some $p \in \At$. First we show that $\emptyset ; (\phi)^{\mathbb{D}} \Vdash^{-}_{(\mathcal{C})^{\mathbb{D}}} p$ and $\emptyset ; (\psi)^{\mathbb{D}} \Vdash^{-}_{(\mathcal{C})^{\mathbb{D}}} p$ hold. Pick any $\mathcal{D} \supseteq (\mathcal{C})^{\mathbb{D}}$ such that $\Vdash^{-}_{\mathcal{D}} (\phi)^{\mathbb{D}}$. The induction hypothesis yields $\Vdash^{+}_{(\mathcal{D})^{\mathbb{D}}} ((\phi)^{\mathbb{D}})^{\mathbb{D}}$, which by Lemma \ref{lemma:involutivenessformulas} is equivalent to $\Vdash^{+}_{(\mathcal{D})^{\mathbb{D}}} \phi$.  Since $\mathcal{D} \supseteq (\mathcal{C})^{\mathbb{D}}$ by Lemma \ref{lemma:baseinversion} we have $(\mathcal{D})^{\mathbb{D}} \supseteq \mathcal{C}$, hence since $\phi ; \emptyset \Vdash^{+}_{\mathcal{C}} p$ and $ \Vdash^{+}_{(\mathcal{D})^{\mathbb{D}}} \phi$  we conclude  $ \Vdash^{+}_{(\mathcal{D})^{\mathbb{D}}} p$ by \ref{c:inf+}.  The induction hypothesis yields  $ \Vdash^{-}_{((\mathcal{D})^{\mathbb{D}})^{\mathbb{D}}} (p)^{\mathbb{D}}$, which by Definition \ref{def:dualityformulas} and Proposition \ref{remark} is equivalent to $\Vdash^{-}_{\mathcal{D}} p$.   Since $\mathcal{D}$ is an arbitrary extension of $(\mathcal{C})^{\mathbb{D}}$ with $\Vdash^{-}_{\mathcal{D}} (\phi)^{\mathbb{D}}$ we conclude $ \emptyset; (\phi)^{\mathbb{D}} \Vdash^{-}_{(\mathcal{C})^{\mathbb{D}}} p$ by \ref{c:inf-}. A similar argument yields $ \emptyset; (\psi)^{\mathbb{D}} \Vdash^{-}_{(\mathcal{C})^{\mathbb{D}}} p$. Since $\mathcal{C \supseteq \mathcal{B}}$ and $\mathcal{B} = ((\mathcal{B})^{\mathbb{D}})^{\mathbb{D}}$ we conclude $\mathcal{C} \supseteq ((\mathcal{B})^{\mathbb{D}})^{\mathbb{D}}$, hence by Lemma \ref{lemma:baseinversion} also $(\mathcal{C})^{\mathbb{D}} \supseteq (\mathcal{B})^{\mathbb{D}}$.  Since $\emptyset ; (\phi)^{\mathbb{D}} \Vdash^{-}_{(\mathcal{C})^{\mathbb{D}}} p$ and $\emptyset ; (\psi)^{\mathbb{D}} \Vdash^{-}_{(\mathcal{C})^{\mathbb{D}}} p$ both hold and $\Vdash^{-}_{(\mathcal{B})^{\mathbb{D}}} (\phi)^{\mathbb{D}} \land (\psi)^{\mathbb{D}}$ holds we conclude $\Vdash^{-}_{(\mathcal{C})^{\mathbb{D}}} p$ by \ref{c:and-}, hence $\Vdash^{-}_{(\mathcal{C})^{\mathbb{D}}} (p)^{\mathbb{D}}$ by Definition \ref{def:dualityformulas} and $\Vdash^{+}_{\mathcal{C}} p$ by the induction hypothesis.  This means that, for any $\mathcal{C} \supseteq \mathcal{B}$ and any $p \in \At$, if $\phi ; \emptyset \Vdash^{+}_{\mathcal{C}} p$ and $\psi; \emptyset \Vdash^{+}_{\mathcal{C}} p$ then $\Vdash^{+}_{C} p$. A similar argument shows that, for any $\mathcal{C} \supseteq \mathcal{B}$ and any $p \in \At$, if $\phi ; \emptyset \Vdash^{-}_{\mathcal{C}} p$ and $\psi; \emptyset \Vdash^{-}_{\mathcal{C}} p$ then $\Vdash^{-}_{\mathcal{C}} p$, thence by \ref{c:or+} and arbitrariness of $\mathcal{C}$ we finally conclude $\Vdash^{+}_{\mathcal{B}} \phi \lor \psi$.
\end{enumerate}

\medskip

  \item $(\chi = \phi \land \psi, * = +)$.
  \medskip

\begin{enumerate}
 \item[$(\Rightarrow)$] Assume $\Vdash^{+}_{\mathcal{B}} \phi \land \psi$. Then by \ref{c:and+} we have $\Vdash^{+}_{\mathcal{B}} \phi$ and $\Vdash^{+}_{\mathcal{B}} \psi$. The induction hypothesis yields $\Vdash^{-}_{(\mathcal{B})^{\mathbb{D}}} (\phi)^{\mathbb{D}}$ and $\Vdash^{-}_{(\mathcal{B})^{\mathbb{D}}} (\psi)^{\mathbb{D}}$, hence $\Vdash^{-}_{(\mathcal{B})^{\mathbb{D}}} (\phi)^{\mathbb{D}} \lor (\psi)^{\mathbb{D}}$ by \ref{c:or-}, whence $\Vdash^{-}_{(\mathcal{B})^{\mathbb{D}}} (\phi \land \psi)^{\mathbb{D}}$ by Definition \ref{def:dualityformulas}.

 \medskip

  \item[$(\Leftarrow)$] Assume $\Vdash^{-}_{(\mathcal{B})^{\mathbb{D}}} (\phi \land \psi)^{\mathbb{D}}$. Then $\Vdash^{-}_{(\mathcal{B})^{\mathbb{D}}} (\phi)^{\mathbb{D}} \lor (\psi)^{\mathbb{D}}$  by Definition \ref{def:dualityformulas}, hence $\Vdash^{-}_{(\mathcal{B})^{\mathbb{D}}} (\phi)^{\mathbb{D}}$ and $\Vdash^{-}_{(\mathcal{B})^{\mathbb{D}}} (\psi)^{\mathbb{D}}$ by \ref{c:or-}. The induction hypothesis yields $\Vdash^{+}_{\mathcal{B}} \phi$ and $\Vdash^{+}_{\mathcal{B}} \psi$, thence by \ref{c:and+} we conclude $\Vdash^{-}_{(\mathcal{B})^{\mathbb{D}}} (\phi \land \psi)^{\mathbb{D}}$.
  
\end{enumerate}

\medskip

 \item $(\chi = \phi \to \psi, * = +)$.

    \medskip

\begin{enumerate}
    \item[$(\Rightarrow)$] Assume $\Vdash^{+}_{\mathcal{B}} \phi \to \psi$. We have to show  $\Vdash^{-}_{(\mathcal{B})^{\mathbb{D}}} (\phi \to \psi)^{\mathbb{D}}$, which is equivalent to $\Vdash^{-}_{(\mathcal{B})^{\mathbb{D}}} (\psi)^{\mathbb{D}} \mapsfrom (\phi)^{\mathbb{D}}$ by Definition \ref{def:dualityformulas}. Pick any $\mathcal{C} \supseteq (\mathcal{B})^{\mathbb{D}}$ such that $\Vdash^{-}_{\mathcal{C}} (\phi)^{\mathbb{D}}$. The induction hypothesis yields $\Vdash^{+}_{(\mathcal{C})^{\mathbb{D}}} ((\phi)^{\mathbb{D}})^{\mathbb{D}}$, which by Lemma \ref{lemma:involutivenessformulas} is equivalent to $\Vdash^{+}_{(\mathcal{C})^{\mathbb{D}}} \phi$. Since $\mathcal{C} \supseteq (\mathcal{B})^{\mathbb{D}}$ by Lemma \ref{lemma:baseinversion} we conclude $(\mathcal{C})^{\mathbb{D}} \supseteq \mathcal{B}$. Since $\Vdash^{+}_{\mathcal{B}} \phi \to \psi$ we conclude $\phi; \emptyset \Vdash^{+}_{\mathcal{B}} \psi$ by \ref{c:to+}. Since $\phi; \emptyset \Vdash^{+}_{\mathcal{B}} \psi$, $(\mathcal{C})^{\mathbb{D}} \supseteq \mathcal{B}$ and $\Vdash^{+}_{(\mathcal{C})^{\mathbb{D}}} \phi$  we conclude $\Vdash^{+}_{(\mathcal{C})^{\mathbb{D}}} \psi$ by \ref{c:inf+}. The inductive hypothesis yields $\Vdash^{-}_{((\mathcal{C})^{\mathbb{D}})^{\mathbb{D}}} (\psi)^{\mathcal{D}}$, which by Proposition \ref{remark} is equivalent to $\Vdash^{-}_{\mathcal{C}} (\psi)^{\mathcal{D}}$. But then for every $\mathcal{C} \supseteq (\mathcal{B})^{\mathbb{D}}$ with $\Vdash^{-}_{\mathcal{C}} (\phi)^{\mathbb{D}}$ we have $\Vdash^{-}_{\mathcal{C}} (\psi)^{\mathbb{D}}$,  so we conclude $\emptyset ; (\phi)^{\mathbb{D}} \Vdash^{-}_{(\mathcal{B})^{\mathbb{D}}} (\psi)^{\mathbb{D}}$ by \ref{c:inf-}, thence $\Vdash^{-}_{(\mathcal{B})^{\mathbb{D}}} (\psi)^{\mathcal{D}} \mapsfrom (\phi)^{\mathbb{D}}$ by \ref{c:mapsfrom-}.

    \medskip

    \item[$(\Leftarrow)$] Assume $\Vdash^{-}_{(\mathcal{B})^{\mathbb{D}}} (\phi \to \psi)^{\mathbb{D}}$, which is equivalent to $\Vdash^{-}_{(\mathcal{B})^{\mathbb{D}}} (\psi)^{\mathbb{D}} \mapsfrom (\phi)^{\mathbb{D}}$ by Definition \ref{def:dualityformulas} . We have to show $\Vdash^{+}_{\mathcal{B}} \phi \to \psi$. Pick any $\mathcal{C} \supseteq \mathcal{B}$ such that $\Vdash^{+}_{\mathcal{C}} \phi$. The induction hypothesis yields $\Vdash^{-}_{(\mathcal{C})^{\mathbb{D}}} (\phi)^{\mathbb{D}}$. Since $\mathcal{C} \supseteq \mathcal{B}$ and $\mathcal{B} =((\mathcal{B})^{\mathbb{D}})^{\mathbb{D}}$ by Proposition \ref{remark}, we have $\mathcal{C} \supseteq ((\mathcal{B})^{\mathbb{D}})^{\mathbb{D}}$, hence $(\mathcal{C})^{\mathbb{D}} \supseteq (\mathcal{B})^{\mathbb{D}}$ by Lemma \ref{lemma:baseinversion}. Since $\Vdash^{-}_{(\mathcal{B})^{\mathbb{D}}} (\psi)^{\mathbb{D}} \mapsfrom (\phi)^{\mathbb{D}}$ we conclude $\emptyset ;(\phi)^{\mathbb{D}} \Vdash^{-}_{(\mathcal{B})^{\mathbb{D}}} (\psi)^{\mathbb{D}}$ by \ref{c:mapsfrom-}. Since $\emptyset ;(\phi)^{\mathbb{D}} \Vdash^{-}_{(\mathcal{B})^{\mathbb{D}}} (\psi)^{\mathbb{D}}$, $(\mathcal{C})^{\mathbb{D}} \supseteq (\mathcal{B})^{\mathbb{D}}$ and $\Vdash^{-}_{(\mathcal{C})^{\mathbb{D}}} (\phi)^{\mathbb{D}}$ we conclude $\Vdash^{-}_{(\mathcal{C})^{\mathbb{D}}} (\psi)^{\mathbb{D}}$ by \ref{c:inf-}. The inductive hypothesis yields $\Vdash^{+}_{((\mathcal{C})^{\mathbb{D}})^{\mathbb{D}}} ((\psi)^{\mathbb{D}})^{\mathbb{D}}$, which by Proposition \ref{remark} and Lemma \ref{lemma:involutivenessformulas} is equivalent to $\Vdash^{+}_{\mathcal{C}} \psi$. But then for every $\mathcal{C} \supseteq \mathcal{B}$ with $\Vdash^{+}_{\mathcal{C}} \phi$ we have $\Vdash^{+}_{\mathcal{C}} \psi$, hence we conclude $\phi ; \emptyset \Vdash^{+}_{\mathcal{B}} \psi$ by \ref{c:inf-}, thence $\Vdash^{+}_{\mathcal{B}} \phi \to \psi$ by \ref{c:to+}.
\end{enumerate}

\medskip

  \item $(\chi = \phi \mapsfrom \psi, * = +)$.
  \medskip

\begin{enumerate}
 \item[$(\Rightarrow)$] Assume $\Vdash^{+}_{\mathcal{B}} \phi \mapsfrom \psi$. Then by \ref{c:mapsfrom+} we have $\Vdash^{+}_{\mathcal{B}} \phi$ and $\Vdash^{-}_{\mathcal{B}} \psi$. The induction hypothesis yields $\Vdash^{-}_{(\mathcal{B})^{\mathbb{D}}} (\phi)^{\mathbb{D}}$ and $\Vdash^{+}_{(\mathcal{B})^{\mathbb{D}}} (\psi)^{\mathbb{D}}$, hence $\Vdash^{-}_{(\mathcal{B})^{\mathbb{D}}} (\psi)^{\mathbb{D}} \to (\phi)^{\mathbb{D}}$ by \ref{c:to-}, whence $\Vdash^{-}_{(\mathcal{B})^{\mathbb{D}}} (\phi \to \psi)^{\mathbb{D}}$ by Definition \ref{def:dualityformulas}.

 \medskip

  \item[$(\Leftarrow)$] Assume $\Vdash^{-}_{(\mathcal{B})^{\mathbb{D}}} (\phi \mapsfrom \psi)^{\mathbb{D}}$. Then $\Vdash^{-}_{(\mathcal{B})^{\mathbb{D}}} (\psi)^{\mathbb{D}} \to (\phi)^{\mathbb{D}}$  by Definition \ref{def:dualityformulas}, hence $\Vdash^{+}_{(\mathcal{B})^{\mathbb{D}}} (\psi)^{\mathbb{D}}$ and $\Vdash^{-}_{(\mathcal{B})^{\mathbb{D}}} (\phi)^{\mathbb{D}}$ by \ref{c:to-}. The induction hypothesis yields $\Vdash^{+}_{\mathcal{B}} \phi$ and $\Vdash^{-}_{\mathcal{B}} \psi$, thence by \ref{c:mapsfrom+} we conclude $\Vdash^{+}_{\mathcal{B}} \phi \mapsfrom \psi$.
  
\end{enumerate}

\medskip

\item The inductive steps for $(\chi = \bot, *= -)$, $(\chi = \top, *= -)$, $(\chi = \phi \lor \psi, * = -)$, $(\chi = \phi \land \psi, * = -)$, $(\chi = \phi \to \psi, * = -)$ and $(\chi = \phi \mapsfrom \psi, * = -)$  are symmetrical with the ones for  $(\chi = \top, *= +)$, $(\chi = \bot, *= +)$, $(\chi = \phi \land \psi, * = +)$, $(\chi = \phi \lor \psi, * = +)$, $(\chi = \phi \mapsfrom \psi, * = +)$ and $(\chi = \phi \to \psi, * = +)$, respectively, so their proofs are omited. 
   \end{enumerate}

\end{proof}

\begin{definition}\label{def:dualitysets} 
  For any set $\Gamma$ of formulas we define  $(\Gamma)^{\mathbb{D}} = \{(\chi)^{\mathbb{D}} | \chi \in \Gamma \}$.

\end{definition}

\begin{corollary}[Strong bilateral semantic harmony]\label{cor:strongharmony}
    $\Gamma ; \Delta \Vdash^{*}_{\mathcal{B}} \chi$ iff $(\Delta)^{\mathbb{D}}; (\Gamma)^{\mathbb{D}} \Vdash^{(*)^{\mathbb{D}}}_{(\mathcal{B})^{\mathbb{D}}} (\chi)^{\mathbb{D}}$.
\end{corollary}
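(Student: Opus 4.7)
The plan is to split the argument on whether $\Gamma \cup \Delta$ is empty. In the empty case, both sides collapse to the weak bilateral semantic harmony of Theorem \ref{the:semanticharmony} applied to $\chi$, since $\emptyset;\emptyset \Vdash^{*}_{\mathcal{B}} \chi$ is by convention just $\Vdash^{*}_{\mathcal{B}} \chi$, and likewise $(\emptyset)^{\mathbb{D}} = \emptyset$. So this case is immediate.

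For the non-empty case I would unfold the definitions using clauses \ref{c:inf+} and \ref{c:inf-} on both sides and show that they transform into each other under dualization. Concretely, for the forward direction, I would assume $\Gamma;\Delta \Vdash^{*}_{\mathcal{B}} \chi$ and pick an arbitrary $\mathcal{D} \supseteq (\mathcal{B})^{\mathbb{D}}$ with $\Vdash^{+}_{\mathcal{D}} \phi'$ for every $\phi' \in (\Delta)^{\mathbb{D}}$ and $\Vdash^{-}_{\mathcal{D}} \psi'$ for every $\psi' \in (\Gamma)^{\mathbb{D}}$. By Definition \ref{def:dualitysets}, each such $\phi'$ is $(\psi)^{\mathbb{D}}$ for some $\psi \in \Delta$, and each $\psi'$ is $(\phi)^{\mathbb{D}}$ for some $\phi \in \Gamma$. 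Applying Theorem \ref{the:semanticharmony} to these statements and using Proposition \ref{remark} together with Lemma \ref{lemma:involutivenessformulas} to cancel the double duals, I obtain $\Vdash^{-}_{(\mathcal{D})^{\mathbb{D}}} \psi$ for every $\psi \in \Delta$ and $\Vdash^{+}_{(\mathcal{D})^{\mathbb{D}}} \phi$ for every $\phi \in \Gamma$. Lemma \ref{lemma:baseinversion} guarantees $(\mathcal{D})^{\mathbb{D}} \supseteq \mathcal{B}$, so the original assumption applied to $(\mathcal{D})^{\mathbb{D}}$ yields $\Vdash^{*}_{(\mathcal{D})^{\mathbb{D}}} \chi$. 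A final application of Theorem \ref{the:semanticharmony} converts this into $\Vdash^{(*)^{\mathbb{D}}}_{((\mathcal{D})^{\mathbb{D}})^{\mathbb{D}}} (\chi)^{\mathbb{D}}$, which by Proposition \ref{remark} is $\Vdash^{(*)^{\mathbb{D}}}_{\mathcal{D}} (\chi)^{\mathbb{D}}$. Since $\mathcal{D}$ was arbitrary, \ref{c:inf+} or \ref{c:inf-} gives $(\Delta)^{\mathbb{D}};(\Gamma)^{\mathbb{D}} \Vdash^{(*)^{\mathbb{D}}}_{(\mathcal{B})^{\mathbb{D}}} (\chi)^{\mathbb{D}}$.

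The backward direction will be structurally identical, relying on the same three ingredients: weak semantic harmony to swap signs and dualize formulas, Lemma \ref{lemma:baseinversion} to transport the extension relation across dualization of bases, and the involution properties of Proposition \ref{remark} and Lemma \ref{lemma:involutivenessformulas} to eliminate the double duals that arise. Because the involutions make duality self-inverse at every relevant level (signs, formulas, bases, sets of formulas), the two directions mirror each other and no new machinery is needed.

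I do not anticipate a substantial obstacle: all the heavy lifting is already done in Theorem \ref{the:semanticharmony} and Lemma \ref{lemma:baseinversion}. The one bookkeeping point that requires some care is the asymmetric swap of $\Gamma$ and $\Delta$ under dualization, which arises because a proof assumption $\phi$ in the original context becomes a refutation assumption $(\phi)^{\mathbb{D}}$ in the dualized context (and vice versa); this is exactly what makes Theorem \ref{the:semanticharmony} applicable to each assumption of the supporting extension, and it is the only place where one must be attentive not to conflate the roles of $\Gamma$ and $\Delta$.
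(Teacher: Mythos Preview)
Your proposal is correct and follows essentially the same approach as the paper's proof: split on whether $\Gamma\cup\Delta$ is empty, invoke Theorem~\ref{the:semanticharmony} directly in the empty case, and in the non-empty case unfold \ref{c:inf+}/\ref{c:inf-}, use Theorem~\ref{the:semanticharmony} on each assumption, Lemma~\ref{lemma:baseinversion} to transport the extension, and the involutions of Proposition~\ref{remark} and Lemma~\ref{lemma:involutivenessformulas} to cancel double duals. The paper writes out the backward direction explicitly rather than appealing to symmetry, but the argument is the same.
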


\begin{proof}
    For $\Gamma \cup \Delta$ empty the result follows from Theorem \ref{the:semanticharmony}. For non-empty $\Gamma \cup \Delta$, assume  $\Gamma ; \Delta \Vdash^{*}_{\mathcal{B}} \chi$. Pick any extension $\mathcal{C}$ of $(\mathcal{B})^{\mathbb{D}}$ with $\Vdash^{+}_{C} (\psi)^{\mathbb{D}}$ for all $(\psi)^{\mathbb{D}} \in (\Delta)^{\mathbb{D}}$ and $\Vdash^{-}_{C} (\phi)^{\mathbb{D}}$ for all $(\phi)^{\mathbb{D}} \in (\Gamma)^{\mathbb{D}}$. Theorem \ref{the:semanticharmony} yields $\Vdash^{+}_{(C)^{\mathbb{D}}} ((\phi)^{\mathbb{D}})^{\mathbb{D}}$ and $\Vdash^{-}_{(C)^{\mathbb{D}}} ((\psi)^{\mathbb{D}})^{\mathbb{D}}$ for all $(\psi)^{\mathbb{D}} \in (\Delta)^{\mathbb{D}}$ and all $(\phi)^{\mathbb{D}} \in (\Gamma)^{\mathbb{D}}$, which by Lemma \ref{lemma:involutivenessformulas} and Definition \ref{def:dualitysets} means that $\Vdash^{+}_{(C)^{\mathbb{D}}} \phi$ holds for all $\phi \in \Gamma$ and $\Vdash^{-}_{(C)^{\mathbb{D}}} \psi$ holds for all $\psi \in \Delta$. Since $\mathcal{C} \supseteq (\mathcal{B})^{\mathbb{D}}$ we conclude $(\mathcal{C})^{\mathbb{D}} \supseteq \mathcal{B}$ by Lemma \ref{lemma:baseinversion}, hence since $\Gamma ; \Delta \Vdash^{*}_{\mathcal{B}} \chi$ and both $\Vdash^{+}_{(C)^{\mathbb{D}}} \phi$ holds for all $\phi \in \Gamma$ and $\Vdash^{-}_{(C)^{\mathbb{D}}} \psi$ hold for all $\psi \in \Delta$, by either \ref{c:inf+} or \ref{c:inf-} we conclude $\Vdash^{*}_{(\mathcal{C})^{\mathbb{D}}} \chi$. Theorem \ref{the:semanticharmony} thus yields $\Vdash^{(*)^{\mathbb{D}}}_{((\mathcal{C})^{\mathbb{D}})^{\mathbb{D}}} (\chi)^{\mathbb{D}}$, so by Proposition \ref{remark} we have $\Vdash^{(*)^{\mathbb{D}}}_{\mathcal{C}} (\chi)^{\mathbb{D}}$. Since $\mathcal{C}$ was an arbitrary extension of $(\mathcal{B})^{\mathbb{D}}$ with  $\Vdash^{+}_{C} (\psi)^{\mathbb{D}}$ for all $(\psi)^{\mathbb{D}} \in (\Delta)^{\mathbb{D}}$ and $\Vdash^{-}_{C} (\phi)^{\mathbb{D}}$ for all $(\phi)^{\mathbb{D}} \in (\Gamma)^{\mathbb{D}}$ (which are the totality of formulas in $(\Gamma)^{\mathbb{D}}$ and $(\Delta)^{\mathbb{D}}$ due to Definition \ref{def:dualitysets}) and we have shown $\Vdash^{(*)^{\mathbb{D}}}_{\mathcal{C}} (\chi)^{\mathbb{D}}$ we conclude $(\Delta)^{\mathbb{D}}; (\Gamma)^{\mathbb{D}} \Vdash^{(*)^{\mathbb{D}}}_{(\mathcal{B})^{\mathbb{D}}} (\chi)^{\mathbb{D}}$ by either \ref{c:inf+} or \ref{c:inf-}. For the  converse, assume $(\Delta)^{\mathbb{D}}; (\Gamma)^{\mathbb{D}} \Vdash^{(*)^{\mathbb{D}}}_{(\mathcal{B})^{\mathbb{D}}} (\chi)^{\mathbb{D}}$. Pick any $\mathcal{C} \supseteq \mathcal{B}$ with $\Vdash^{+}_{C} \phi$ for all $\phi \in \Gamma$ and $\Vdash^{-}_{\mathcal{C}} \psi$ for all $\psi \in \Delta$. Theorem \ref{the:semanticharmony} yields $\Vdash^{-}_{(C)^{\mathbb{D}}} (\phi)^{\mathbb{D}}$ for all $\phi \in \Gamma$ and $\Vdash^{+}_{(\mathcal{C})^{\mathbb{D}}} (\psi)^{\mathbb{D}}$ for all $\psi \in \Delta$. Since $\mathcal{C} \supseteq \mathcal{B}$ and $\mathcal{B} = ((\mathcal{B)^{\mathbb{D}}})^{\mathbb{D}}$ we have $\mathcal{C} \supseteq ((\mathcal{B)^{\mathbb{D}}})^{\mathbb{D}}$, so also $(\mathcal{C})^\mathbb{D} \supseteq (\mathcal{B)^{\mathbb{D}}}$ by Lemma \ref{lemma:baseinversion}. Since $(\Delta)^{\mathbb{D}}; (\Gamma)^{\mathbb{D}} \Vdash^{(*)^{\mathbb{D}}}_{(\mathcal{B})^{\mathbb{D}}} (\chi)^{\mathbb{D}}$, $(\mathcal{C})^\mathbb{D} \supseteq (\mathcal{B)^{\mathbb{D}}}$ and both $\Vdash^{-}_{(C)^{\mathbb{D}}} (\phi)^{\mathbb{D}}$ for all $\phi \in \Gamma$ and $\Vdash^{+}_{(\mathcal{C})^{\mathbb{D}}} (\psi)^{\mathbb{D}}$ for all $\psi \in \Delta$ (thus for the totality of formulas  $(\phi)^{\mathbb{D}}$ in $ (\Gamma)^{\mathbb{D}}$ and $(\psi)^{\mathbb{D}} $ in $ (\Delta)^{\mathbb{D}}$ by Definition \ref{def:dualitysets}) we conclude $\Vdash^{(*)^{\mathbb{D}}}_{\mathcal{(C)^{\mathbb{D}}}} (\chi)^{\mathbb{D}}$ by either \ref{c:inf+} or \ref{c:inf-}, whence Theorem \ref{the:semanticharmony} yields $\Vdash^{*}_{\mathcal{C}} \chi$. Since $\mathcal{C}$ is an arbitrary extension of $\mathcal{B}$ with $\Vdash^{+}_{C} \phi$ for all $\phi \in \Gamma$ and $\Vdash^{-}_{\mathcal{C}} \psi$ for all $\psi \in \Delta$ we finally conclude  $\Gamma ; \Delta \Vdash^{*}_{\mathcal{B}} \chi$ by either \ref{c:inf+} or \ref{c:inf-}.


\end{proof}


Bilateral semantic harmony comes both in a \textit{weak} version, which pertains only to proofs and refutations, and a \textit{strong} version, which also takes into account derivability and entailment. Theorem \ref{the:semanticharmony} and Corollary \ref{cor:strongharmony} shows that $\Bint$ satisfy both versions. 

From a technical perspective, satisfaction of bilateral semantic harmony is desirable because it enables simple semantic embeddings between fragments of the language. Refutation conditions of a formula $\phi$ in $\mathcal{B}$ can be restated in terms of proof conditions for its dual $(\phi)^{\mathbb{D}}$ in $(\mathcal{B})^{\mathbb{D}}$, the same holding for the proof conditions of $\phi$ and refutation conditions of $(\phi)^{\mathbb{D}}$, so the proof fragment of the language can always be embedded into the refutation fragment (and vice-versa). Weak bilateral harmony enables embeddings preserving demonstrability, whereas strong bilateral harmony enables the preservation of derivability. This can in turn be combined with other embeddings to show even stronger results. For instance, it could be used to semantically prove the embedding results in \cite{Wan110.1093/logcom/ext035}, which show that $\Bint$ can be embedded in its entirety in either the $\mapsfrom - $free proof fragment or the $\to - $ free refutation fragment of $\BintN$ (hence also into both intuitionistic and dual-intuitionistic language). The result would immediately follow after proofs of co-implications were embedded into proofs of other connectives and refutations of implications were embedded into refutations of other connectives.

Still on the technical side, the results allows us to infer the refutation conditions of a logical connective from its proof conditions (and vice-versa), provided we are looking for the strongest opposite conditions harmonic w.r.t. the original conditions\footnote{The inversion principle cannot be applied, for instance, to modal defitinions, in which there is a fundamental imbalance between the strength of introduction and elimination rules.}. Notice also that the semantic principles are not sensitive to the language of the logic, as $\Bint$ does not really contains the dual of implication (strictly speaking, the dual of a formula $\phi \to \psi$ should be a formula $\phi \to^{\circ} \psi$ absent in the language \cite{francez2014bilateralism}, but we are able to circumvent this because $\phi \to^{\circ}\psi$ is equivalent to $\psi \mapsfrom \phi$). Once either the proof or refutation clauses have been defined for a language, the remaining ones can be obtained via bilateral semantic harmony, showing that it is indeed merely a semantic restatement of the syntactically defined horizontal inversion principle.


From a philosophical standpoint, bilateral semantic harmony is important both due to the existence of arguments to the effect that the horizontal inversion principle is a requisite of meaning-conferring bilateral definitions \cite{francez2014bilateralism} and to what it tells us about bilateral semantics in general. The isomorphisms observed between proofs of formulas and refutations of their duals (and vice-versa) in well-behaved bilateral logics is also observed between atomic deductions and their duals, with a clear impact on how proof and refutation support operates in bases and dual bases. The very possibility of dualizing all of the basic elements of the semantics through such proof-theoretic isomorphisms reveals their semantic importance. Moreover, since such an isomorphism is now a characteristic property of the basic semantics, the meaning-conferring character of the horizontal inversion principle may now be interpreted as the fact that the requirements it imposes on proof and refutation rules are harmony-preserving.

\section{Conclusion}

Bilateral base-extension semantics extends traditional base-extension semantics by allowing atomic bases to contain rules capable of producing independent proofs and refutations of atomic formulas. Atomic proofs and refutations can then be extended to non-atomic formulas through semantic clauses as usual. From the fact that rules present in the natural deduction system $\BintNA$ but absent in the deductively equivalent $\BintN$ must be explicitly incorporated in the semantics for it to function properly we infer that, due to differences in how syntactic and semantic definitions are recursively defined, a proof system does not always explicitly represent its underlying semantic proof notion, a point not yet raised in the literature. Furthermore, the added structure of bilateral base-extension semantics allows the definition of duality notions for rules, deduction and bases, which then allows the proof-theoretic horizontal inversion principle to be restated as the principle of bilateral semantic harmony.

 Traditional base-extension semantics shows that logical validity is reducible to base-validity, which in turn is reducible do derivability in particular bases. Bilateral base-extension semantics also shows that logical proofs and refutations are ultimately reducible to proofs and refutations in bases. Since proofs and refutations are objects with similar structure but opposite assertoric forces, we are now allowed to use the proof-theoretic structure of bases to define their opposite base. Since the proofs of a base are isomorphic to the refutations in its dual base and vice-versa, the very concept of opposition of assertoric forces becomes characterizable in terms of this structural isomorphism at the atomic level, whence if a similar isomorphism is demanded of the semantic clauses (or the rules they represent) the isomorphism naturally carries over to logical connectives and their opposites. As such, bilateral proof-theoretic harmony may now be interpreted as the requirement that proof and refutation rules for connectives preserve the semantic harmony already present at the core level of the semantics.

 \bigskip

\noindent \textbf{Acknowledgements.} Barroso-Nascimento is supported by the Leverhulme grant RPG-2024-196. Osório is supported by the Portuguese Foundation for Science and Technology grant 2024.00633.BD. Both authors would like to thank Prof. Elaine Pimentel for her invaluable support and feedback during the writing of this paper.

\newpage


\bibliography{references}

\end{document}